\documentclass[12pt,a4paper]{amsart}

\usepackage[english]{babel}
\usepackage{amsmath}
\usepackage{amssymb}
\usepackage{amsfonts}
\usepackage{mathrsfs}
\usepackage{amsthm}
\usepackage{graphicx}
\usepackage{framed}
\usepackage{mdframed}
\usepackage[colorinlistoftodos]{todonotes}
\usepackage{cite}

\usepackage[normalem]{ulem} 

\usepackage[all]{xy}
\usepackage{float}
\usepackage{tikz-cd}

\newtheorem{theorem}{Theorem}[section]
\newtheorem*{theorem*}{Theorem}
\newtheorem{lemma}[theorem]{Lemma}
\newtheorem{proposition}[theorem]{Proposition}

\newtheorem{corollary}[theorem]{Corollary}

\theoremstyle{definition}
\newtheorem{definition}[theorem]{Definition}
\newtheorem{example}[theorem]{Example}

\newtheorem{remark}[theorem]{Remark}

\usepackage{dsfont}
\usepackage{stmaryrd}
\usepackage[scr=boondox]{mathalfa}

\DeclareMathOperator{\ad}{ad}
\DeclareMathOperator{\Ad}{Ad}

\DeclareMathOperator{\Aut}{Aut}

\DeclareMathOperator{\Aff}{Aff}
\DeclareMathOperator{\Orth}{O}
\DeclareMathOperator{\SOrth}{SO}
\DeclareMathOperator{\GLin}{GL}
\DeclareMathOperator{\SLin}{SL}
\DeclareMathOperator{\Sp}{Sp}
\DeclareMathOperator{\SU}{SU}
\DeclareMathOperator{\Spin}{Spin}

\DeclareMathOperator{\PGL}{PGL}
\DeclareMathOperator{\POrth}{PO}

\newcommand{\MC}[1]{\omega_{{}_{#1}}}
\newcommand{\Lt}[1]{\operatorname{L}_{#1}}

\newcommand{\quot}[1]{\mathrm{q}_{{}_{#1}}}

\usepackage[all]{xy}

\usepackage[normalem]{ulem}

\DeclareFontEncoding{T3}{}{}
\DeclareFontSubstitution{T3}{cmr}{m}{n}
\DeclareSymbolFont{tipa}{T3}{cmr}{m}{sl}
\SetSymbolFont{tipa}{bold}{T3}{cmr}{bx}{sl}
\DeclareMathSymbol{\kgf}{\mathord}{tipa}{'255}

\usepackage[T2A,OT1]{fontenc}
\makeatletter
\newcommand{\Zhe}{\mathord{\mathchoice{\mbox{\fontsize\tf@size\z@\usefont{T2A}{\rmdefault}{m}{n}{\CYRZH}}}{\mbox{\fontsize\tf@size\z@\usefont{T2A}{\rmdefault}{m}{n}{\CYRZH}}}{\mbox{\fontsize\sf@size\z@\usefont{T2A}{\rmdefault}{m}{n}{\CYRZH}}}{\mbox{\fontsize\ssf@size\z@\usefont{T2A}{\rmdefault}{m}{n}{\CYRZH}}}}}
\makeatother

\newenvironment{smallbmatrix}
  {\left[\begin{smallmatrix}}
  {\end{smallmatrix}\right]}

\newcommand{\rotoiso}{\begin{smallbmatrix}0 & -1 \\ 1 & 0\end{smallbmatrix}}

\usepackage{hyperref}

\title[A kinematic explanation of the 1:3 ratio]{A kinematic explanation of the 1:3 ratio for rolling spheres and the exceptional simple Lie group of rank two}
\author{Jacob W. Erickson}

\begin{document}
\begin{abstract}Given a pair of (round) 2-dimensional spheres, one of which has radius three times that of the other, the Lie algebra of local infinitesimal symmetries for the distribution corresponding to rolling the spheres along each other (without letting them slip or twist) happens to be isomorphic to the split real form of the exceptional simple Lie algebra of rank 2. These exceptional local symmetries appear only for this specific 1:3 ratio of radii, however, and while there are several proofs of this result, a straightforward kinematic explanation for the seemingly miraculous appearance of an exceptional simple Lie group in this situation has long been desired. In this paper, we provide such an explanation, relating the ratio of radii to the intersections of a pair of one-parameter subgroups that can be seen from the rolling sphere perspective. The approach does not require the split-octonions, as we construct the exceptional simple Lie group directly by Tanaka prolongation to aid in visualizing the underlying geometry.\end{abstract}

\maketitle

\section{Introduction}
When first encountering the exceptional simple Lie groups, they can often seem unapproachable and mysterious. By their very nature, they tend to fall outside of the patterns with which we have more experience, and even the smallest among them is $14$-dimensional. Hoping to find a nice, visual way of thinking about one of them might, therefore, seem overly optimistic. This makes the following result, attributed to Robert Bryant in \cite{BorMontgomery2009}, all the more surprising.

\begin{theorem*}
Consider a pair of 2-spheres $\mathbb{S}^2_{r_0}$ and $\mathbb{S}^2_{r_1}$ of radii $r_0$ and $r_1$, respectively. Let $M$ be the $5$-dimensional space of relative configurations of the two spheres such that they have tangential contact at a point, and let $D_{r_0/r_1}$ be the rank 2 distribution on $M$ whose elements are the velocities corresponding to rolling the spheres along each other without letting them slip or twist.

\hspace{-0.4em}When $\tfrac{r_0}{r_1}\in\{3,\tfrac{1}{3}\}$, the Lie algebra of local infinitesimal symmetries for the distribution $D_{r_0/r_1}$ at a point of $M$ is isomorphic to the split real form of the exceptional simple Lie algebra of rank 2. For all other ratios of radii with $r_0\neq r_1$, the Lie algebra of local infinitesimal symmetries is isomorphic to $\mathfrak{so}(3)\oplus\mathfrak{so}(3)$.
\end{theorem*}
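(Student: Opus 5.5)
Since both rolling distributions are $(2,3,5)$-distributions (bracket-generating with growth vector $(2,3,5)$), I will prove the statement with Cartan's equivalence method for such distributions.

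\textbf{Setting up the distribution.} Model $M$ locally as $\SOrth(3)\times\SOrth(3)/\SOrth(2)$, or more concretely as $\mathbb{S}^2\times\SOrth(3)$: a contact point on the fixed sphere together with the relative rotation. I then write $D_{r_0/r_1}$ as the span of two explicit vector fields $X_1,X_2$, namely infinitesimal rolling in the two tangent directions. Computing $X_3=[X_1,X_2]$ and then $[X_i,X_3]$ shows that $D$ is bracket-generating with growth vector $(2,3,5)$ exactly when $r_0\neq r_1$. In the equal-radii case the no-twist condition makes the distribution integrable beyond step three, which explains why that case is excluded.

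\textbf{Computing the Cartan quartic.} By Cartan's theorem, the symmetry algebra of a $(2,3,5)$-distribution has dimension at most $14$. Equality holds if and only if the fundamental binary quartic $\mathcal{C}$, the harmonic curvature of the associated parabolic geometry of type $(G_2,P)$, vanishes identically, and in that case the symmetry algebra is the split real form $\mathfrak{g}_2$. So the central step is to compute $\mathcal{C}$ for $D_{r_0/r_1}$. Everything is homogeneous under the $\SOrth(3)\times\SOrth(3)$ action by rotating the two spheres, and this action preserves $D$. It therefore suffices to compute $\mathcal{C}$ at a single point, using an adapted coframe $\theta^1,\dots,\theta^5$ built from the invariant fields and their brackets. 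I would use Nurowski's explicit formula for the $G_2$-conformal class, or Cartan's structure equations, to extract the quartic's coefficients. The result should be a constant multiple of a fixed quartic (of root type $[4]$ or $[2,2]$ by symmetry), with a coefficient proportional to
\[
(r_0^2-9r_1^2)(9r_0^2-r_1^2)
\]
up to nonvanishing factors. This coefficient vanishes precisely when $r_0/r_1\in\{3,\tfrac13\}$, which gives the $\mathfrak{g}_2$ case.

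\textbf{The generic case.} When the quartic does not vanish, Cartan's classification gives a bound. A $(2,3,5)$-distribution with nonvanishing $\mathcal{C}$ at a point has a symmetry algebra of dimension at most $7$, and more precisely at most $6$ when the quartic is nonzero and its symmetry is computed along with the stabilizer. Since $\mathfrak{so}(3)\oplus\mathfrak{so}(3)$ acts with $6$ dimensions of symmetry, the symmetry algebra contains it. To show equality I check two things. First, the isotropy of the full symmetry algebra at a point must preserve the quartic, and here it is at most $1$-dimensional, matching the $\SOrth(2)$ stabilizer. Second, a $7$-dimensional case would force the rigid Cartan type, with a specific non-compact structure, and this is incompatible with containing $\mathfrak{so}(3)\oplus\mathfrak{so}(3)$ when $r_0\neq r_1$ and the ratio is not $3^{\pm1}$.

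\textbf{Main obstacle.} I expect the hardest part to be the explicit curvature computation that produces the factor $(r_0^2-9r_1^2)(9r_0^2-r_1^2)$. I would do it in a left-invariant frame on $\SOrth(3)\times\SOrth(3)$, where all structure functions are constants depending on $r_0$ and $r_1$.
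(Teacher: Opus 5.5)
Your route is genuinely different from the paper's: you compute Cartan's quartic $\mathcal{C}$ for the homogeneous distribution and then invoke Cartan's theorems. It can be completed, but as written it has two soft spots.

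\textbf{The decisive computation is only predicted.} Nothing in the proposal actually produces the factor $(r_0^2-9r_1^2)(9r_0^2-r_1^2)$. Your predicted zero set is the right one; it agrees with An--Nurowski and with Dennis The's computation. You can also shrink the work considerably. The isotropy $\Orth(2)$ acts on $D_x$ by its standard representation, and $\mathcal{C}_x\in S^4D_x^*$ is $\Orth(2)$-invariant. So in an orthonormal frame with dual coordinates $u_1,u_2$ you must have $\mathcal{C}_x=c(r_0,r_1)\,(u_1^2+u_2^2)^2$. In particular type $[4]$ is impossible, and only the single scalar $c$ needs to be computed.

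\textbf{The generic case does not work as stated.} Preservation of the quartic only bounds $\mathrm{gr}_0(\mathfrak{s})=\mathfrak{s}^0/\mathfrak{s}^1$. Elements of $\mathfrak{s}^1$ act trivially on $D_x$, so they preserve $\mathcal{C}_x$ automatically. Hence ``the isotropy is at most $1$-dimensional'' does not follow. Likewise, excluding dimension $7$ via a ``specific non-compact structure'' is not an argument.

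Both holes close at once using the paper's own machinery:
\begin{itemize}
\item $\mathrm{gr}_0(\mathfrak{s})\subseteq\mathfrak{ann}\big((u_1^2+u_2^2)^2\big)=\mathfrak{so}(2)$, because $\mathds{1}$ acts on $S^4D_x^*$ by $-4\neq 0$.
\item $\mathrm{gr}(\mathfrak{s})$ is a graded subalgebra of $\mathfrak{g}$ containing $\mathfrak{g}_{-1}$, and $\mathrm{tr}[\alpha,v]=-\alpha(v)$ for $\alpha\in\mathfrak{g}_1$, $v\in\mathfrak{g}_{-1}$. So no nonzero $\alpha$ can bracket $\mathfrak{g}_{-1}$ into the traceless $\mathfrak{so}(2)$, and $\mathrm{gr}_1(\mathfrak{s})=0$.
\item Positive-degree elements of $\mathfrak{g}$ are determined by their brackets with $\mathfrak{g}_{-1}$, so $\mathrm{gr}_2(\mathfrak{s})=\mathrm{gr}_3(\mathfrak{s})=0$ as well.
\end{itemize}
This gives $\dim\mathfrak{s}\le 6$ directly. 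You then no longer need Cartan's bound of $7$ or any classification of the $7$-dimensional models. A minor point: for $r_0=r_1$ the distribution is simply integrable, $[D,D]\subseteq D$; ``integrable beyond step three'' is not the right description.

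\textbf{Comparison with the paper.} The paper never computes a curvature.
\begin{itemize}
\item It builds $\mathfrak{g}$ by Tanaka prolongation, which already yields the $14$-dimensional bound.
\item It passes to the maximal compact $K$, doubly covered by $\Spin(3)\times\Spin(3)$. $K$ acts transitively on $\Zhe_2/P$ with stabilizer $\Orth(2)$, so $K$ sees the canonical distribution of $(\Zhe_2,P)$ as a rolling distribution $\tilde D_{r_0/r_1}$ with a priori unknown ratio.
\item The ratio is read off as a slope in a maximal torus of $K$. The one-parameter subgroups through $e_1+\theta(e_1)$ and $\bar e_1+\theta(\bar e_1)$ meet exactly twice, by a parity check on $\mathfrak{sl}_2$-strings in the root diagram. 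The second has slope $-1$, which forces the first to have slope $3^{\pm1}$.
\end{itemize}

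What your approach buys: it is mechanical, it extends to arbitrary pairs of rolling surfaces as in An--Nurowski, and it identifies the Petrov type. Once $\mathcal{C}$ is known, it also makes the ``other ratios'' half shorter than the paper's case analysis of $\mathrm{gr}_0(\mathfrak{s})$. What it costs: it rests on Cartan's deep theorem that $\mathcal{C}\equiv 0$ characterizes local flatness, and on a long structure-equation computation. The $3$ then appears only as a root of a polynomial, which is exactly the kind of explanation the paper replaces with the kinematic picture of four intersecting orbits.
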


This gives us a relatively manageable way of seeing what $\Zhe_2$, the smallest non-compact exceptional simple Lie group---also called $\mathrm{G}_{2(2)}$ in \cite{LeistnerNurowski2012} and $\mathrm{G}_2'$ in \cite{Evans2025}, among other names in other places---``looks like'', since we get a local picture that just involves rolling a pair of $2$-spheres along each other. Indeed, we provide somewhat detailed methods for visualizing $\Zhe_2$ based on this idea in Chapter 3 of \cite{EricksonThesis}.

But why is it that the ratio of the radii \textit{must} be 1:3 or 3:1 for the exceptional symmetries to show up? Given how miraculous this result appears, it is perhaps not too surprising that there are several (partial) proofs of it in the literature, using a plethora of different perspectives.\linebreak In \cite{BorMontgomery2009}, for example, Bor and Montgomery heroically dredge the 1:3 ratio from a careful, meticulous investigation of structure constants and bracket considerations in the exceptional simple Lie algebra of rank $2$. Agrachev indicated a slightly more geometric approach in \cite{Agrachev2007}, later echoed in \cite{BaezHuerta2014}, by using the exceptional simple Lie group $\Zhe_2$ in its guise as the automorphism group of the split-octonions to relate certain curves to projectivizations of $2$-dimensional null subalgebras, with the 1:3 ratio being the only one that makes this relationship work. Earlier work of Zelenko in \cite{Zelenko2006} singled out the 1:3 ratio without any reference to the exceptional simple Lie groups or the split-octonions at all, instead focusing on the vanishing of a particular curvature invariant related to variational methods and Cartan's infamous ``five variables'' paper \cite{Cartan1910}. Later, interested in the more general (and still active) question of when pairs of Riemannian surfaces can admit rolling distributions with such exceptional symmetries, An and Nurowski recovered the 1:3 ratio in \cite{AnNurowski2014} using ideas from conformal pseudo-Riemannian geometry, also drawing inspiration from Cartan's work in \cite{Cartan1910}. Even more recently, a proof was given by The in \cite{The2022}, via a computationally efficient application of the modern Cartan-geometric machinery for $(2,3,5)$-distributions.

While these previous approaches all provide valid and worthwhile justifications for why the 1:3 ratio appears here, they also take us fairly far afield from the kind of picture that we would instinctively want. Would it not be more satisfying to be able to point to some geometric or kinematic facet of the rolling spheres themselves to explain where this ratio comes from, rather than divining it from the structure of a modification of the octonions with split signature norm? In \cite{BorMontgomery2009}, Bor and Montgomery pose the finding of such an interpretation of the ratio as an open problem, and in this paper, we will share an answer to this problem, following Chapter 2 of the author's PhD dissertation \cite{EricksonThesis}.

Let us outline our visual interpretation of the 1:3 ratio here, without any further preamble. Consider a pair of great circles, one on each of the two spheres $\mathbb{S}^2_{r_0}$ and $\mathbb{S}^2_{r_1}$, configured so that they are tangent to each other. There are two natural one-parameter subgroups of symmetries for the distribution that move the spheres in such a way that these great circles remain tangent to each other: one that moves the spheres by rolling them together along the great circles without letting them slip or twist, and the other sliding them along the great circles in such a way that, if we imagine the smaller sphere as an eyeball and the larger sphere as being fixed in place, then the eyeball is always looking toward the same point on the horizon. We have illustrated what these look like in Figure \ref{benotafraid}. Directly from the root diagram---see Lemma \ref{2intersections}---we can see that the corresponding one-parameter subgroups of $\Zhe_2$, if they exist, must intersect exactly twice. This amounts to the orbits of these one-parameter subgroups within the relative configuration space of the rolling spheres intersecting exactly four times, which (as noted in \cite{BaezHuerta2014}) happens if and only if the ratio of the radii is 1:3 or 3:1.

\begin{figure}[h]
\centering\includegraphics[width=0.45\textwidth]{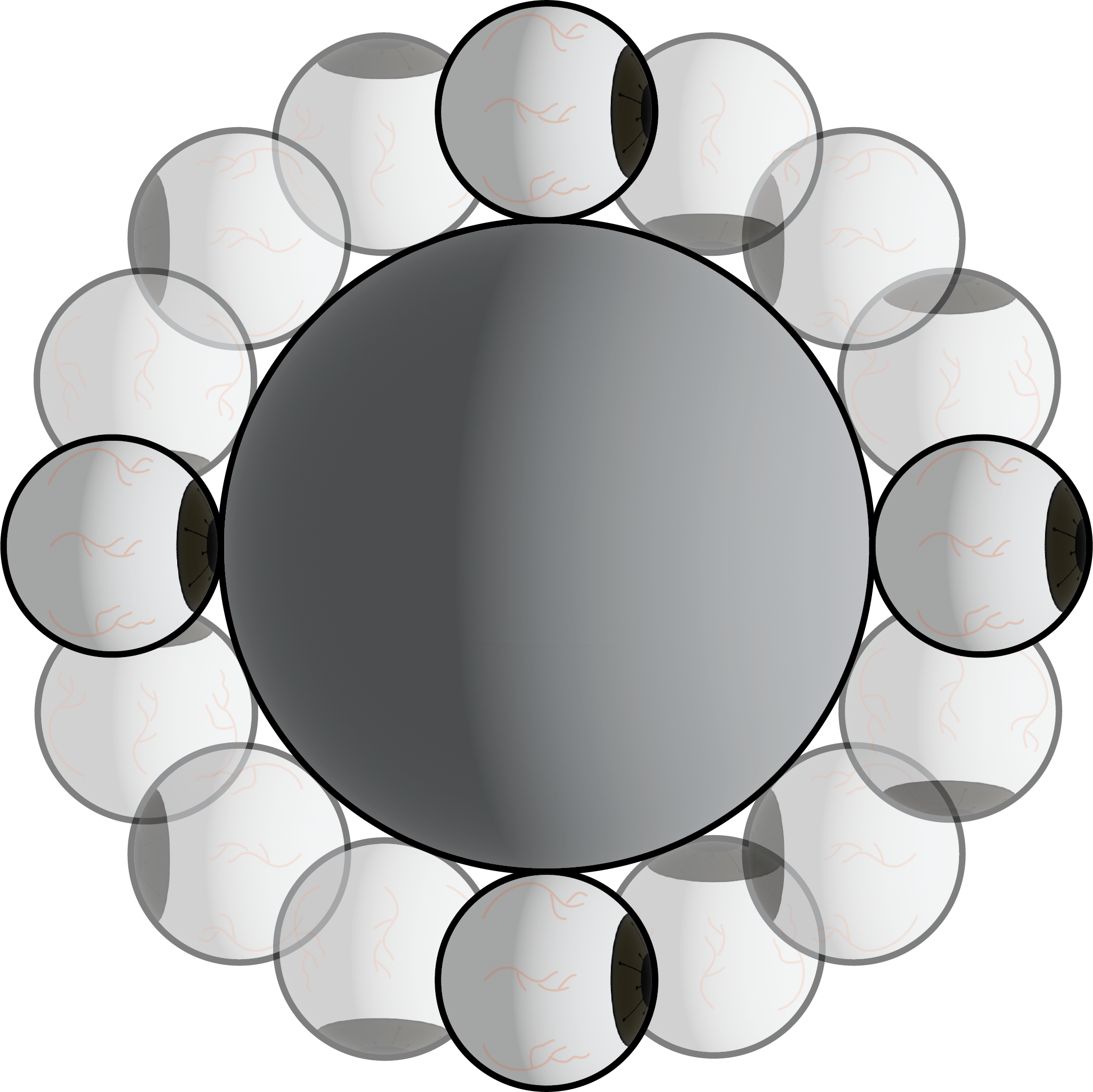}
\hfill\includegraphics[width=0.45\textwidth]{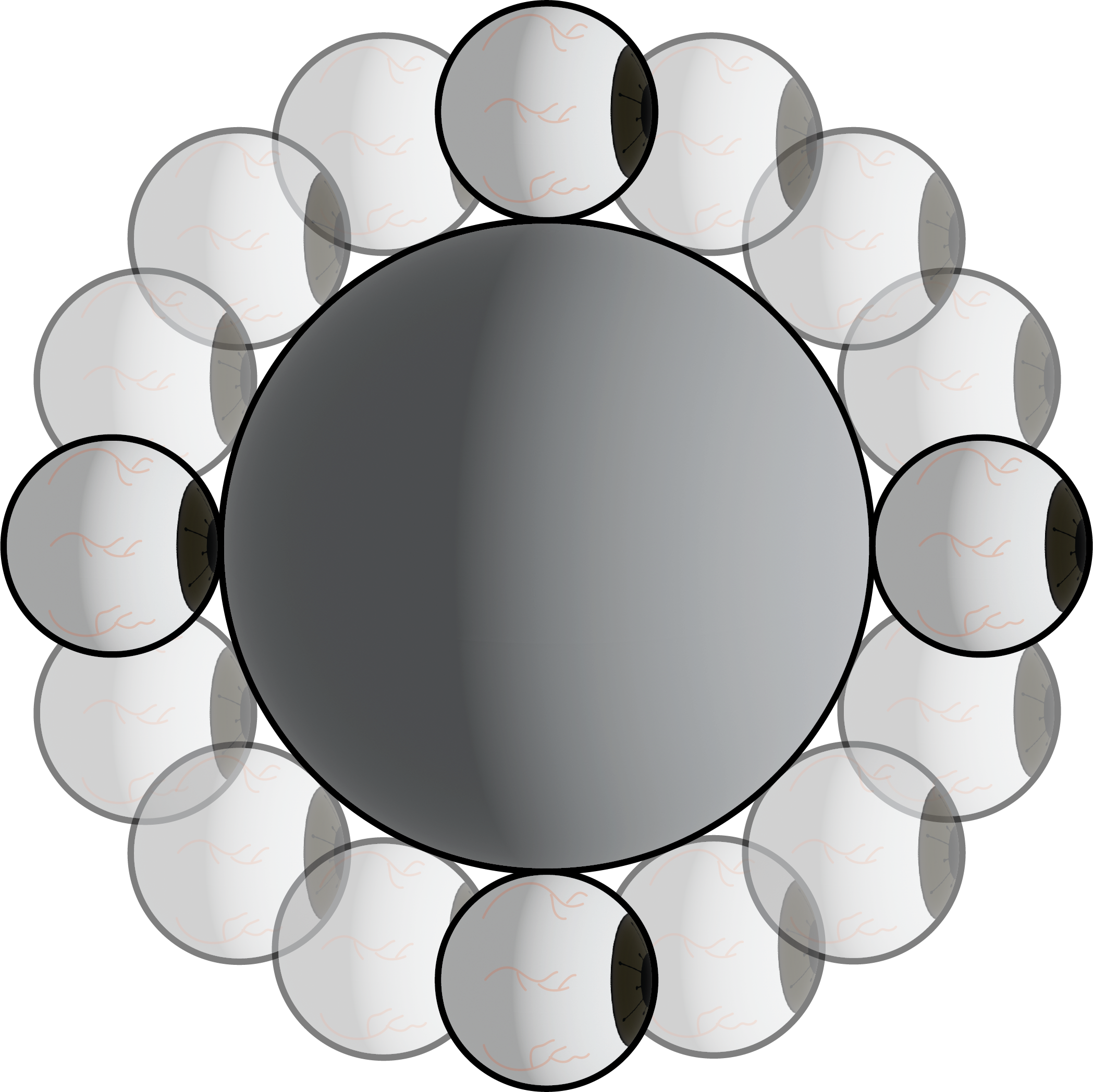}
\caption{Illustrations of the orbits for two particular one-parameter subgroups of symmetries for the rolling distribution, with the left one corresponding to rolling the spheres together along a pair of great circles and the right one corresponding to sliding them along those great circles such that the eyeball always looks in the same direction. The four points of intersection of the two orbits are indicated by greater opacity}
\label{benotafraid}
\end{figure}

The rest of the article is dedicated to filling in the details surrounding this interpretation of the 1:3 ratio. After reviewing some relevant tools from the modern Erlangen program in Section \ref{background}, we will construct $\Zhe_2$ geometrically in Section \ref{buildingZhe2} and then use the geometric structure that we glean from it to prove the main theorem above in Section \ref{proofsection}, including proving that the ratios of radii that are not 1:1, 1:3, or 3:1 admit no additional local infinitesimal symmetries beyond $\mathfrak{so}(3)\oplus\mathfrak{so}(3)$.

\section{Some background on model geometries}\label{background}
From the perspective of Felix Klein's Erlangen program, geometric structure amounts to whatever is preserved by the symmetries that we prescribe for the geometry. This point of view has a natural unifying spirit to it, as it brings all homogeneous geometries under the same conceptual framework, and conveniently, that conceptual framework reformulates a lot of geometry as Lie theory. The Erlangen viewpoint, for this reason, is especially well-suited for our current endeavor, where we seek to understand the relationship between a particular Lie group and this visibly geometric concept of rolling spheres.

Throughout, we follow the terminology and notation of \cite{EricksonThesis}, though much of this section can be found between \cite{Sharpe1997} and \cite{CapSlovakPG1}, which are the current standard references for the topic.

\subsection{Generalities for model geometries}
\begin{definition}A \emph{model geometry} (or just \emph{model}) is a pair $(G,H)$, where $G$ is a Lie group and $H\leq G$ is a closed subgroup such that $G/H$ is connected. For a model geometry $(G,H)$, the Lie group $G$ is called the \emph{symmetry group} or \emph{model group} of the geometry, while $H$ is called the \emph{isotropy} or \emph{stabilizer subgroup}.\end{definition}

For a model $(G,H)$, we always denote by $\quot{H}\!:G\to G/H,~ g\mapsto gH$ the natural quotient map from $G$ to $G/H$. This makes $G$ a principal $H$-bundle over $G/H$.

\begin{example}Euclidean geometry is encoded as a model geometry as $(\mathrm{Euc}(m),\Orth(m))$, where $\mathrm{Euc}(m)\simeq\mathbb{R}^m\rtimes\Orth(m)$. The symmetry group of $m$-dimensional Euclidean geometry is $\mathrm{Euc}(m)$, which acts by isometries on the connected homogeneous space $\mathrm{Euc}(m)/\Orth(m)\cong\mathbb{R}^m$, which we can think of as $m$-dimensional Euclidean space.\end{example}

\begin{example}In a similar vein, affine geometry corresponds to the model $(\mathrm{Aff}(m),\GLin_m\mathbb{R})$, where $\mathrm{Aff}(m)\simeq\mathbb{R}^m\rtimes\GLin_m\mathbb{R}$ is the Lie group of affine transformations on $\mathrm{Aff}(m)/\GLin_m\mathbb{R}\cong\mathbb{R}^m$, which we think of as $m$-dimensional affine space.\end{example}

\begin{example}We can encode $m$-dimensional spherical geometry via the model $(\Orth(m+1),\Orth(m))$, where $\Orth(m)$ is embedded in $\Orth(m+1)$ via $i:A\mapsto\left[\begin{smallmatrix}1 & 0 \\ 0 & A\end{smallmatrix}\right]$, so that $i(\Orth(m))$ is the stabilizer $\mathrm{Stab}_{\Orth(m+1)}(e_1)$ of the first basis vector $e_1\in\mathbb{R}^{m+1}$ under the usual action of $\Orth(m+1)$. For this model, the connected homogeneous space is $\Orth(m+1)/\Orth(m)\cong\mathbb{S}^m$, the $m$-dimensional sphere.\end{example}



Euclidean, affine, and spherical geometry are all examples of what are called \emph{reductive geometries}, meaning that, as an $\Ad_H$-representation, the Lie algebra $\mathfrak{g}$ of the model group $G$ decomposes as $\mathfrak{g}=\mathfrak{m}\oplus\mathfrak{h}$ for some $\Ad_H$-subrepresentation $\mathfrak{m}\subseteq\mathfrak{g}$. In the Euclidean case, we can write this $\Ad_{\Orth(m)}$-invariant subspace $\mathfrak{m}$ as the ideal corresponding to translations in the Lie algebra $\mathfrak{euc}(m)$, while for spherical geometry, the $\Ad_{\Orth(m)}$-invariant subspace $\mathfrak{m}$ is given by $\{\left[\begin{smallmatrix}0 & -v^\top \\ v & 0\end{smallmatrix}\right]:v\in\mathbb{R}^m\}$, which we can similarly think of as a space of infinitesimal translations, although it is not a subalgebra of the model Lie algebra $\mathfrak{o}(m+1)$.

For $(G,H)$ reductive, a curve in $G/H$ is called a \emph{geodesic} if and only if it is of the form $t\mapsto \quot{H}(g\exp(tX))$ for some $X\in\mathfrak{m}$ and $g\in G$. These are precisely the geodesics that we are used to seeing for these types of geometries: for $(v,0)\in\mathfrak{m}\subseteq\mathfrak{euc}(m)$, $\exp(t(v,0))=(tv,\mathds{1})$, so geodesics take the form \[t\mapsto \quot{\Orth(m)}((u,A)(tv,\mathds{1}))=\quot{\Orth(m)}(u+t(Av),A)\cong u+t(Av),\] which are just the straight lines through $u$ with constant velocity $Av$. Similarly, for spherical geometry, geodesics are left-translations of great circles through $\quot{\Orth(m)}(\mathds{1})\cong e_1\in\mathbb{S}^m$, parametrized with constant speed.



This description of geodesics can seem rather artificial when first encountering it, so it is worth taking a moment to dispel this faulty impression. Consider, when we cross the street, that we typically take a geodesic path, or at least something approximating one: we walk ``straight across'' rather than taking a more winding route. After some reflection, however, we should recognize that we are almost certainly not devoting the kind of energy required for our brains to genuinely contemplate the infinite-dimensional space of possible paths in order to find the one that minimizes distance. Instead, we are specifying our motion by, at each point in time along our trajectory, telling ourselves to do something along the lines of ``move forward (and do not turn)''. We are, in short, designating a constant direction for our (translational)\linebreak velocity while stipulating that our angular velocity is $0$. This is exactly what we are doing when specifying a geodesic within this Lie-theoretic framework: we pick a constant ``translational'' velocity in $\mathfrak{m}$ and move in that direction at each point in time, with the analogue of the angular velocity component in $\mathfrak{h}$ then required to be $0$.

We can extend this way of thinking using the Maurer--Cartan form, the principal invariant for model geometries.

\begin{definition}For a Lie group $G$, the \emph{Maurer--Cartan form} $\MC{G}$ is the left-invariant $\mathfrak{g}$-valued 1-form on $G$ defined by $\MC{G}(\xi_g):=\Lt{g^{-1}*}\xi_g\in \mathfrak{g}$ for $\xi_g\in T_gG$, where $\Lt{a}:g\mapsto ag$ denotes left-translation by $a\in G$.\end{definition}

The Maurer--Cartan form $\MC{G}$ lets us describe tangent vectors at any element of the Lie group $G$ in terms of corresponding velocities through the identity, allowing us to specify tangent directions in terms of a ``constant'' space of velocities as we did with geodesics above. Since the geometric structure of a model $(G,H)$ is defined to be preserved under left-translation by elements of $G$, this gives us a natural way of specifying invariants of the geometry.

Since $\MC{G}$ restricts to a linear isomorphism from $T_gG$ to $\mathfrak{g}=T_eG$ for each $g\in G$, we can consider its inverse as well: for $X\in\mathfrak{g}$, $\MC{G}^{-1}(X)$ is the corresponding left-invariant vector field. Since the bracket on the Lie algebra coincides with the bracket on the corresponding left-invariant vector fields, we will always have $[\MC{G}^{-1}(X),\MC{G}^{-1}(Y)]=\MC{G}^{-1}([X,Y])$ for $X,Y\in\mathfrak{g}$.

\subsection{Geometry of rolling spheres}\label{rollattach}\hfill\\
Now, let us concoct a model geometry $(G,H)$ for a pair of rolling spheres of dimension 2. We want our model's underlying homogeneous space $G/H$ to be (a connected component of) the $5$-dimensional space of relative configurations of two spheres $\mathbb{S}^2_{r_0}$ and $\mathbb{S}^2_{r_1}$ such that they are in tangential contact at a point, where $\mathbb{S}^2_r$ denotes the round sphere of radius $r$. More specifically, we want $G/H$ to be one of the two connected components $M$ of the space
\[\left\{(x_0,x_1,\phi):\begin{array}{l}x_0\in\mathbb{S}^2_{r_0}, x_1\in\mathbb{S}^2_{r_1},\text{ and } \phi \text{ a linear} \\ \text{isometry from } T_{x_0}\mathbb{S}^2_{r_0} \text{ to } T_{x_1}\mathbb{S}^2_{r_1}\end{array}\right\},\]
with $(x_0,x_1,\phi)$ being the point of the relative configuration space given by gluing the tangent space of $x_0\in\mathbb{S}^2_{r_0}$ to the tangent space of $x_1\in\mathbb{S}^2_{r_1}$ by $\phi$ (see Figure \ref{touchingspheres}). If we assign orientations to the two spheres, then the two connected components correspond to whether the linear map $\phi$ is orientation-preserving or reversing, which pictorially corresponds to whether one sphere is rolling along the inside or outside of the other sphere; it does not matter which connected component we choose, since applying an orientation-reversing isometry to one of the two spheres will swap between them.

\begin{figure}[h]
\centering\includegraphics[width=0.55\textwidth]{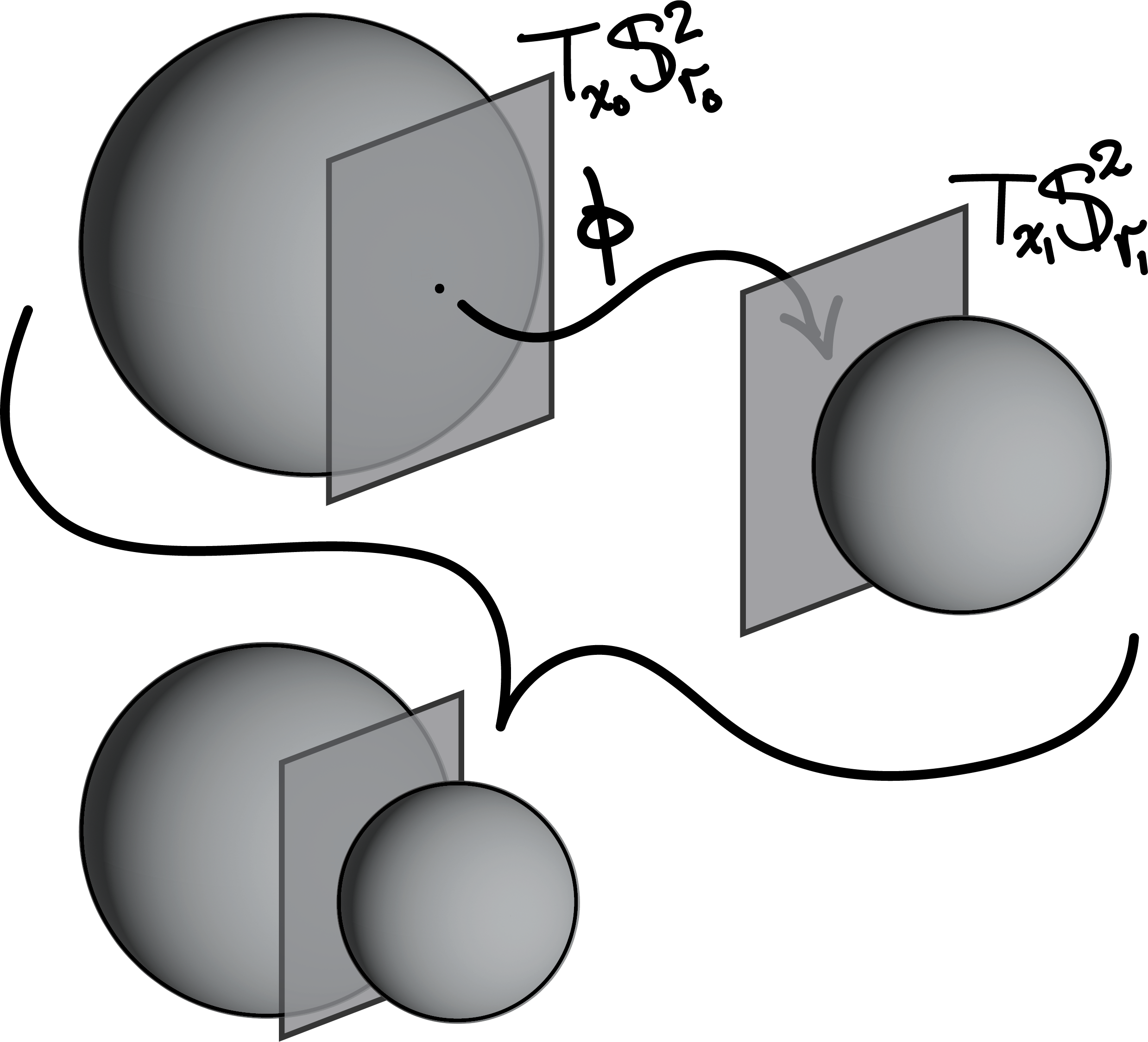}
\caption{An illustration of a point $(x_0,x_1,\phi)$ in the relative configuration space for rolling spheres}
\label{touchingspheres}
\end{figure}

Consider the Lie group \[\mathrm{S}(\Orth(3)\times\Orth(3)):=\{(A_0,A_1)\in\Orth(3)\times\Orth(3):\det(A_0)\det(A_1)=1\}.\] This has a transitive action on $M$ given by \[(A_0,A_1)\cdot(x_0,x_1,\phi):=(A_0x_0,A_1x_1,A_1\circ\phi\circ A_0^{-1}),\] where we note that the determinant condition on $(A_0,A_1)$ guarantees that $A_1\circ\phi\circ A_0^{-1}$ preserves or reverses orientation precisely when $\phi$ does, so the action preserves the connected component $M$. Pictorially, $(A_0,A_1)\in\mathrm{S}(\Orth(3)\times\Orth(3))$ acts on $M$ by applying the isometry $A_0$ to $\mathbb{S}^2_{r_0}$ and the isometry $A_1$ to $\mathbb{S}^2_{r_1}$. To find the stabilizer of this action, we can consider a point of the form $(r_0e_1,\pm r_1e_1,\pm\tfrac{r_1}{r_0}\mathds{1}_*)$, depending on whether $\phi=\pm\tfrac{r_1}{r_0}\mathds{1}_*$ must be orientation-preserving or reversing. We have $A_0(r_0e_1)=r_0(A_0e_1)=r_0e_1$ if and only if $A_0\in\mathrm{Stab}_{\Orth(3)}(e_1)$, and likewise, $A_1(\pm r_1e_1)=\pm r_1e_1$ if and only if $A_1\in\mathrm{Stab}_{\Orth(3)}(e_1)$, so \[\mathrm{Stab}_{\mathrm{S}(\Orth(3)\times\Orth(3))}(r_0e_1,\pm r_1e_1,\pm\tfrac{r_1}{r_0}\mathds{1}_*)\leq\mathrm{Stab}_{\Orth(3)}(e_1)\times\mathrm{Stab}_{\Orth(3)}(e_1).\] Since $\pm\tfrac{r_1}{r_0}\mathds{1}$ is central in $\GLin_3\mathbb{R}$, we have \[A_1\circ(\pm\tfrac{r_1}{r_0}\mathds{1})\circ A_0^{-1}=A_1\circ A_0^{-1}\circ(\pm\tfrac{r_1}{r_0}\mathds{1})=\pm\tfrac{r_1}{r_0}\mathds{1}\] if and only if $A_1\circ A_0^{-1}=\mathds{1}$, so the stabilizer of $(r_0e_1,\pm r_1e_1,\pm\tfrac{r_1}{r_0}\mathds{1}_*)$ is \[\{(A,A)\in\mathrm{S}(\Orth(3)\times\Orth(3)):A\in\mathrm{Stab}_{\Orth(3)}(e_1)\}\simeq\mathrm{Stab}_{\Orth(3)}(e_1)\simeq \Orth(2).\]

\begin{definition}\label{rollingsph} The model for $2$-dimensional \emph{rolling spheres} is given by $(\mathrm{S}(\Orth(3)\times\Orth(3)),\Orth(2))$, where $\Orth(2)\simeq\mathrm{Stab}_{\Orth(3)}(e_1)$ is embedded into $\mathrm{S}(\Orth(3)\times\Orth(3))$ via the diagonal embedding $A\mapsto\left(\left[\begin{smallmatrix}1 & 0 \\ 0 & A\end{smallmatrix}\right],\left[\begin{smallmatrix}1 & 0 \\ 0 & A\end{smallmatrix}\right]\right)$.\end{definition}

Note that the Lie algebra of $\mathrm{S}(\Orth(3)\times\Orth(3))$ is $\mathfrak{so}(3)\oplus\mathfrak{so}(3)$; this is precisely where all of the (local) infinitesimal symmetries for the rolling distributions come from when the radii $r_0$ and $r_1$ are not in the 1:3 ratio. Speaking of rolling distributions, we can now use the Maurer--Cartan form to give an explicit description of it on $M\cong\mathrm{S}(\Orth(3)\times\Orth(3))/\Orth(2)$.

\begin{definition}\label{rollingdist} For radii $r_0,r_1>0$, we define the corresponding \emph{rolling distribution} $D_{r_0/r_1}$ for the model geometry $(\mathrm{S}(\Orth(3)\times\Orth(3)),\Orth(2))$ to be the left-invariant distribution on $\mathrm{S}(\Orth(3)\times\Orth(3))/\Orth(2)$ given by
\begin{align*}D_{r_0/r_1} & :=\mathrm{q}_{{}_{\Orth(2)}*}\MC{\mathrm{S}(\Orth(3)\times\Orth(3))}^{-1}(\{(\tfrac{1}{r_0}X,\tfrac{1}{r_1}X):X\in\mathfrak{m}\}+\mathfrak{o}(2)) \\ & ~=\mathrm{q}_{{}_{\Orth(2)}*}\MC{\mathrm{S}(\Orth(3)\times\Orth(3))}^{-1}(\{(\tfrac{1}{r_0}X,\tfrac{1}{r_1}X):X\in\mathfrak{m}\}),\end{align*}
where $\mathfrak{m}$ is the $\Ad_{\Orth(2)}$-invariant complement to our copy of $\mathfrak{o}(2)$ in $\mathfrak{o}(3)$, used to define geodesics in 2-dimensional spherical geometry above.\end{definition}

To understand where this definition comes from, note that model geometries do not generally give us a canonical choice of scale. For example, regardless of our choice of radius $r$, the isometry group of $\mathbb{S}^2_r$ is naturally isomorphic to $\Orth(3)$ and $\mathbb{S}^2_r=\Orth(3)\cdot(re_1)\cong\Orth(3)/\Orth(2)$, so we cannot distinguish between different choices of radius by symmetry alone: we need to specify that externally because the symmetries do not specify a choice of unit for us. In particular, the same geodesic $t\mapsto \quot{\Orth(2)}(A\exp(tX))$ for the model geometry $(\Orth(3),\Orth(2))$ has different constant speeds when viewed as a geodesic for $\mathbb{S}^2_{r}$ with different choices of $r$. Since we want $D_{r_0/r_1}$ to correspond to velocities where the spheres roll along each other (without slipping or twisting), we need the two spheres to move with the same speed, so we normalize their velocities by the choices of radii.

Because $(\tfrac{1}{\lambda r_0}X,\tfrac{1}{\lambda r_1}X)=(\tfrac{1}{r_0}(\tfrac{1}{\lambda}X),\tfrac{1}{r_1}(\tfrac{1}{\lambda}X))$, the distribution $D_{r_0/r_1}$ only depends on the ratio $r_0/r_1$. Just like with spheres, however, this model geometry does not give us a canonical choice for this ratio, since the symmetries never actually ``see'' the scale for either sphere; we again need to specify it externally. Our goal, then, is to find a choice of ratio $\tfrac{r_0}{r_1}$ so that the distribution $D_{r_0/r_1}$ has more local symmetries than just those coming from the isometries of the spheres, which means that a natural next step is to figure out what type of distribution $D_{r_0/r_1}$ is and what kind of symmetries it can have. We will follow this line of thought in the next section.

\subsection{Parabolic model geometries}\hfill\\
Amongst the various types of model geometries of modern interest to differential geometers, a particularly rich class of examples comes from those whose symmetry group is semisimple and whose isotropy subgroup is parabolic.

\begin{definition}In a semisimple Lie algebra $\mathfrak{g}$, a subalgebra $\mathfrak{p}\leq\mathfrak{g}$ is said to be \emph{parabolic} if and only if $\mathfrak{p}^\perp$ is a nilpotent subalgebra, where $\mathfrak{p}^\perp$ denotes the subspace perpendicular to $\mathfrak{p}$ with respect to the Killing form $\kgf(X,Y):=\mathrm{tr}(\ad_X\circ\ad_Y)$. In a semisimple Lie group $G$, a closed subgroup $P$ is \emph{parabolic}\footnote{This is the definition used in \cite{EricksonThesis} and is equivalent to Definition 3.1.3 of \cite{CapSlovakPG1}. Note that, for this definition of parabolic subgroup, $P$ does not need to be the full normalizer of $\mathfrak{p}$, though it will always turn out to be so in the examples considered in this paper.} if and only if its corresponding subalgebra $\mathfrak{p}\leq\mathfrak{g}$ is parabolic.\end{definition}

\begin{definition}A model geometry $(G,P)$ is called \emph{parabolic} when $G$ is a semisimple Lie group and $P$ is a parabolic subgroup.\end{definition}

A parabolic model geometry $(G,P)$ for which $\mathfrak{p}^\perp$ is a $k$-step nilpotent Lie algebra always comes with a natural $\Ad_P$-invariant filtration of $\mathfrak{g}$ of the form
\[\mathfrak{g}=\mathfrak{g}^{-k}\supset\mathfrak{g}^{-k+1}\supset\cdots\supset\mathfrak{g}^k\supset\{0\},\]
with filtration components $\mathfrak{g}^i$ defined by $\mathfrak{g}^0:=\mathfrak{p}$, $\mathfrak{g}^1:=\mathfrak{p}^\perp$, and for each $i>0$, $\mathfrak{g}^{i+1}:=[\mathfrak{p}^\perp,\mathfrak{g}^i]$ and $\mathfrak{g}^{-i}:=(\mathfrak{g}^{i+1})^\perp$. This makes the semisimple Lie algebra $\mathfrak{g}$ into a \emph{filtered Lie algebra}, since $[\mathfrak{g}^i,\mathfrak{g}^j]\subseteq\mathfrak{g}^{i+j}$ for $i,j\in\mathbb{Z}$, noting that the definition above guarantees that $\mathfrak{g}^i=\mathfrak{g}$ for $i\leq -k$ and $\mathfrak{g}^i=\{0\}$ for $i>k$.

Within a parabolic subgroup $P$, there are two especially important subgroups to keep in mind. First, the nilradical $\mathfrak{p}_+:=\mathfrak{p}^\perp=\mathfrak{g}^1$ of $\mathfrak{p}$ determines a simply connected nilpotent subgroup $P_+:=\exp(\mathfrak{p}_+)$ in $P$, which we may call the \emph{positive horospherical subgroup}. This subgroup is a canonical normal subgroup within $P$. The second subgroup to keep in mind is a \emph{Levi subgroup} $G_0\leq P$, which is a choice of reductive Lie subgroup isomorphic to $P/P_+$, so that the short exact sequence \[\{e\}\to P_+\hookrightarrow P\twoheadrightarrow P/P_+\to\{e\}\] splits. There will generally be several (conjugate) choices of $G_0$ within a given $P$, so it is not canonical, though we often pick one and effectively treat it as if it is.

Choosing a Levi subgroup $G_0\leq P$ gives us a decomposition of $\mathfrak{g}$ into $G_0$-subrepresentations. Together with the filtration, this decomposition gives us a natural grading
\[\mathfrak{g}=\sum_{i=-k}^k\mathfrak{g}_i=\mathfrak{g}_{-k}+\cdots+\mathfrak{g}_0+\cdots+\mathfrak{g}_k\]
on $\mathfrak{g}$, with each grading component $\mathfrak{g}_i\approx\mathfrak{g}^i/\mathfrak{g}^{i+1}$ as $G_0$-representations, so that $\mathfrak{g}^i=\sum_{j\geq i}\mathfrak{g}_j$. Notably, this makes $\mathfrak{g}$ into a \emph{graded Lie algebra}\footnote{Following the conventions of \cite{CapSlovakPG1}, all our gradings here will be assumed to be with respect to $\mathbb{Z}$, so a ``graded Lie algebra'' will always mean a $\mathbb{Z}$-graded Lie algebra.}, since $[\mathfrak{g}_i,\mathfrak{g}_j]\subseteq\mathfrak{g}_{i+j}$ for $i,j\in\mathbb{Z}$, noting that $\mathfrak{g}^i/\mathfrak{g}^{i+1}=\{0\}$ for $|i|>k$. The grading component $\mathfrak{g}_0$ of homogeneity 0 is precisely the Lie algebra of $G_0$, and $\mathfrak{p}_+=\sum_{i>0}\mathfrak{g}_i$ is the sum of the positive grading components, justifying their notations.

Using the grading, we can then also define the \emph{negative horospherical subgroup} $G_-:=\exp(\mathfrak{g}_-)$, where $\mathfrak{g}_-$ is the nilpotent subalgebra given by the sum of the negative grading components $\mathfrak{g}_-:=\sum_{i<0}\mathfrak{g}_i$.

These algebraic aspects of parabolic geometries convey a staggeringly vast amount of geometric information in an annoyingly subtle way, so we find that it is helpful to go through a nice, relatively simple example to see what is going on.

\begin{example}Real projective geometry in dimension $m$ is encoded by the model $(\PGL_{m+1}\mathbb{R},P)$, where $\PGL_{m+1}\mathbb{R}\!=\!\GLin_{m+1}\mathbb{R}/\mathbb{R}^\times\mathds{1}$ is the quotient of $\GLin_{m+1}\mathbb{R}$ by its center and
\[P=\left\{\begin{pmatrix}a & \alpha \\ 0 & A\end{pmatrix}\in\PGL_{m+1}\mathbb{R}:a\in\mathbb{R}^\times, \alpha^\top\in\mathbb{R}^m, A\in\GLin_m\mathbb{R}\right\}\]
is the stabilizer of the 1-dimensional subspace spanned by the first basis vector in $\mathbb{R}^{m+1}$, viewed as a point of $\mathbb{RP}^m$. Here, our symmetry group $\PGL_{m+1}\mathbb{R}$ acts by projective transformations on $\PGL_{m+1}\mathbb{R}/P\cong\mathbb{RP}^m$.
\end{example}

The isotropy subgroup $P$ in this example is, of course, parabolic, with corresponding filtration of $\mathfrak{g}=\mathfrak{pgl}_{m+1}\mathbb{R}=\mathfrak{gl}_{m+1}\mathbb{R}/\langle\mathds{1}\rangle$ given by
\begin{align*}\mathfrak{g}^{-1} & =\mathfrak{pgl}_{m+1}\mathbb{R}, \\
\mathfrak{g}^0 & =\mathfrak{p}=\left\{\begin{pmatrix}r & \alpha \\ 0 & R\end{pmatrix}\in\mathfrak{pgl}_{m+1}\mathbb{R}:r\in\mathbb{R}, \alpha^\top\in\mathbb{R}^m, R\in\mathfrak{gl}_m\mathbb{R}\right\}, \\
\mathfrak{g}^1 & =\mathfrak{p}_+=\{\left(\begin{smallmatrix}0 & \alpha \\ 0 & 0\end{smallmatrix}\right)\in\mathfrak{pgl}_{m+1}\mathbb{R}: \alpha^\top\in\mathbb{R}^m\}.\end{align*}
Here, we recall that, by definition, matrices for elements of $\mathfrak{pgl}_{m+1}\mathbb{R}$ represent the same element when they differ from one another by a scalar multiple of the identity matrix $\mathds{1}$. We then have
\[P_+=\exp(\mathfrak{p}_+)=\{\left(\begin{smallmatrix}1 & \alpha \\ 0 & \mathds{1}\end{smallmatrix}\right)\in\PGL_{m+1}\mathbb{R}:\alpha^\top\in\mathbb{R}^m\},\]
and we can choose
\[G_0=\{\left(\begin{smallmatrix}1 & 0 \\ 0 & A\end{smallmatrix}\right)\in\PGL_{m+1}\mathbb{R}:A\in\GLin_m\mathbb{R}\}\simeq\GLin_m\mathbb{R}.\]
The corresponding grading of $\mathfrak{g}=\mathfrak{pgl}_{m+1}\mathbb{R}$ is given by
\begin{align*}\mathfrak{g}_{-1} & =\mathfrak{g}_-=\{\left(\begin{smallmatrix}0 & 0 \\ v & 0\end{smallmatrix}\right)\in\mathfrak{pgl}_{m+1}\mathbb{R}: v\in\mathbb{R}^m\}, \\
\mathfrak{g}_0 & =\{\left(\begin{smallmatrix}r & 0 \\ 0 & R\end{smallmatrix}\right)\in\mathfrak{pgl}_{m+1}\mathbb{R}:r\in\mathbb{R},R\in\mathfrak{gl}_m\mathbb{R}\},\text{ and} \\
\mathfrak{g}_1 & =\mathfrak{p}_+=\{\left(\begin{smallmatrix}0 & \alpha \\ 0 & 0\end{smallmatrix}\right)\in\mathfrak{pgl}_{m+1}\mathbb{R}:\alpha^\top\in\mathbb{R}^m\},\end{align*}
which further lets us specify the negative horospherical subgroup
\[G_-=\{\left(\begin{smallmatrix}1 & 0 \\ v & \mathds{1}\end{smallmatrix}\right)\in\PGL_{m+1}\mathbb{R}: v\in\mathbb{R}^m\}.\]

To see how this helps us understand the geometry of projective space, consider the image
\[\quot{P}(G_-)\cong \left\{\begin{pmatrix}1 & 0 \\ v & \mathds{1}\end{pmatrix}\begin{pmatrix}1 \\ 0\end{pmatrix}=\begin{pmatrix}1 \\ v\end{pmatrix}:v\in\mathbb{R}^m\right\}\]
of $G_-$ under the quotient map $\quot{P}:\PGL_{m+1}\mathbb{R}\to\PGL_{m+1}\mathbb{R}/P\cong\mathbb{RP}^m$. This is an affine chart through $\quot{P}(e)=(\begin{smallmatrix}1 \\ 0\end{smallmatrix})\in\mathbb{RP}^m$, and indeed, every left-translate of it by an element $g\in\PGL_{m+1}\mathbb{R}$ gives us another affine chart $g\,\quot{P}\!(G_-)=\quot{P}\!(gG_-)$ through $\quot{P}(g)$ in $\mathbb{RP}^m$. Inside $\PGL_{m+1}\mathbb{R}$, the subgroup preserving $\quot{P}(G_-)$ is
\[G_-G_0=\left\{\begin{pmatrix}1 & 0 \\ v & A\end{pmatrix}\in\PGL_{m+1}\mathbb{R}:v\in\mathbb{R}^m,A\in\GLin_m\mathbb{R}\right\},\]
and this subgroup acts transitively on $\quot{P}(G_-)$, with stabilizer $G_0$ at the point $\quot{P}(e)=(\begin{smallmatrix}1 \\ 0\end{smallmatrix})\in \quot{P}(G_-)\subset\mathbb{RP}^m$, so that $\quot{P}(G_-)$ carries the structure of the model geometry $(G_-G_0,G_0)$. Because
\[G_-G_0\simeq\mathbb{R}^m\rtimes\GLin_m\mathbb{R}\simeq\Aff(m)\]
by $(\begin{smallmatrix}1 & 0 \\ v & A\end{smallmatrix})\mapsto(v,A)$, this tells us that the geometric structure carried by the affine chart is, perhaps unsurprisingly, exactly the geometric structure of affine space $(G_-G_0,G_0)\cong(\Aff(m),\GLin_m\mathbb{R})$.

Each element $g\in\PGL_{m+1}\mathbb{R}$ therefore determines an open subset
\[g\,\quot{P}(G_-)=\quot{P}(gG_-)=\quot{P}(gG_-G_0)\]
inside $\mathbb{RP}^m$ that we aptly call an affine chart because it naturally carries the geometric structure of affine space, with $g'$ determining the same affine chart on $\mathbb{RP}^m$ as $g$ if and only if $g'\in gG_-G_0$. The various affine charts containing $\quot{P}(e)$ will, then, correspond to the ones determined by elements of the isotropy $P<\PGL_{m+1}\mathbb{R}$ lying over $\quot{P}(e)=\quot{P}(eP)$, with $p,p'\in P$ determining the same affine chart if and only if $p'\in pG_0$. Because $P/G_0\cong P_+$, it follows that $P_+$ acts simply transitively on the set of affine charts through $\quot{P}(e)$ by ``tilting'' between them; we have attempted to depict what this looks like for the 1-dimenisonal case in Figure \ref{RP1tilting}. Viewing $\PGL_{m+1}\mathbb{R}$ as a principal $P$-bundle over $\mathbb{RP}^m$, we can think of right-translation by elements of $G_0\simeq\GLin_m\mathbb{R}$ as essentially just linear changes of frame over the affine chart determined by the given element of $\PGL_{m+1}\mathbb{R}$, with right-translation by elements of $P_+$ tilting between different affine charts through the underlying point of $\mathbb{RP}^m$.

\begin{figure}[h]
\centering\includegraphics[width=\textwidth]{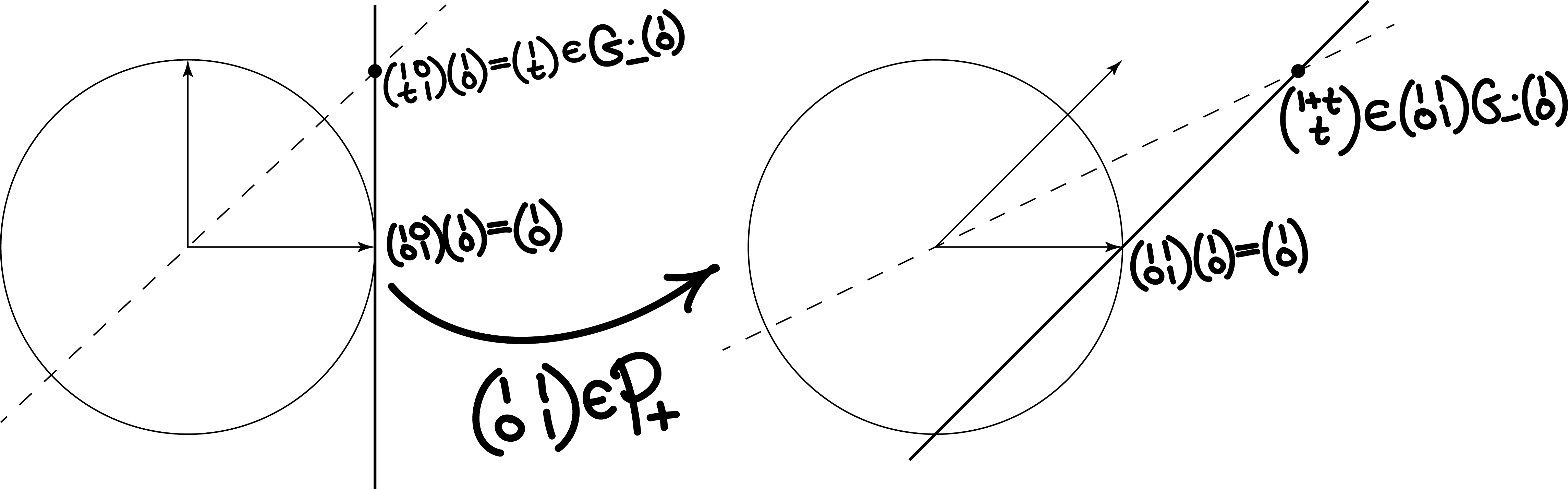}
\caption{A depiction of the way that $P_+$ tilts between the affine charts through the point $\quot{P}(e)=(\begin{smallmatrix}1 \\ 0\end{smallmatrix})\in\mathbb{RP}^1$, with the leftmost part of the picture showing the affine chart $G_-(\begin{smallmatrix}1 \\ 0\end{smallmatrix})$ determined by the identity element $(\begin{smallmatrix}1 & 0 \\ 0 & 1\end{smallmatrix})$\linebreak and the rightmost part of the picture showing the affine chart $(\begin{smallmatrix}1 & 1 \\ 0 & 1\end{smallmatrix})G_-(\begin{smallmatrix}1 \\ 0\end{smallmatrix})$ determined by $(\begin{smallmatrix}1 & 1 \\ 0 & 1\end{smallmatrix})\in P_+$}
\label{RP1tilting}
\end{figure}

By replacing affine charts with the slightly more abstract notion of open Schubert cells, the above picture actually applies to all parabolic geometries $(G,P)$ in more or less the same way. Each $g\in G$ determines an open Schubert cell $\quot{P}\!(gG_-)=\quot{P}\!(gG_-G_0)$ through $\quot{P}\!(g)\!\in\! G/P$, and this is diffeomorphic to $G_-$ because $G_-\cap P=\{e\}$. The open Schubert cells are imbued with the geometric structure of $(G_-G_0,G_0)$, which we can think of as the ``reductive analogue'' of the parabolic geometry $(G,P)$, and each of them will be dense in $G/P$ when $P=\mathrm{N}_G(\mathfrak{p})$, so that $G/P$ can be thought of as a compactification of $G_-\cong G_-G_0/G_0$. Viewing $G$ as a principal $P$-bundle over $G/P$, for $g\in G$, we can think of right-translating by an element of $G_0$ as changing the ``linear frame'' over the open Schubert cell $\quot{P}(gG_-G_0)$, while right-translating by an element of $P_+$ tilts between the different open Schubert cells through the underlying point $\quot{P}(g)\in G/P$.

\section{Building \texorpdfstring{$\Zhe_2$}{Zhe2} from \texorpdfstring{$(2,3,5)$}{(2,3,5)}-distributions}\label{buildingZhe2}
The adjoint split real form $\Zhe_2$ of the exceptional simple Lie group of type $\mathrm{G}_2$ is sometimes characterized as the automorphism group of the split-octonions; see, for example, \cite{Agrachev2007}, \cite{BaezHuerta2014}, and \cite{Evans2025} for elaborations on this perspective. However, while the octonionic viewpoint can be beneficial in some contexts, it ends up hiding much of the geometric intuition for the Lie-theoretic structure of $\Zhe_2$ behind the algebraic structure of the split-octonions, which can be especially unhelpful when one is not already familiar with them. As such, we find it better to take a more hands-on approach: we will build the Lie group $\Zhe_2$ for ourselves from the geometry of $(2,3,5)$-distributions, mirroring Chapter 1 of \cite{EricksonThesis}. This has the benefit of naturally partitioning the Lie algebra of $\Zhe_2$ into geometrically meaningful pieces that are easier to visualize while also incidentally proving that this Lie algebra is maximal among possible symmetry algebras for this type of distribution, which guarantees that the Lie algebra of local infinitesimal symmetries must be exactly the Lie algebra of $\Zhe_2$ if it contains a copy of it.

\subsection{Distributions with small growth vector \texorpdfstring{$(2,3,5)$}{(2,3,5)}}\hfill \\
For a distribution $D=:D^{(1)}$, we will write $D^{(2)}:=[D,D]+D$ for the distribution spanned by Lie brackets of vector fields in $D$ and
\[D^{(3)}:=[D,D^{(2)}]+D^{(2)}=[D,[D,D]]+[D,D]+D\]
for the distribution spanned by Lie brackets of vector fields in $D$ with vector fields in $D^{(2)}$. For convenience, we will also occasionally use $D^{(i)}$ with $i\leq 0$ to denote the zero distribution.

\begin{definition}A rank 2 distribution $D$ on a 5-manifold $M$ is called a \emph{$(2,3,5)$-distribution} if and only if $D^{(2)}$ has rank 3 and $D^{(3)}$ has rank 5, so that $D^{(3)}$ is equal to the whole tangent space at each point of $M$.\end{definition}


Whenever $\tfrac{r_0}{r_1}\neq 1$, the rolling distribution $D_{r_0/r_1}$ from Definition \ref{rollingdist} is an example of a $(2,3,5)$-distribution. To see this, consider the vector fields
\[\xi_1:=\MC{\mathrm{S}(\Orth(3)\times\Orth(3))}^{-1}\left(\tfrac{1}{r_0}\begin{smallbmatrix}0 & -e_1^\top \\ e_1 & 0\end{smallbmatrix}, \tfrac{1}{r_1}\begin{smallbmatrix}0 & -e_1^\top \\ e_1 & 0\end{smallbmatrix}\right)\]
and
\[\xi_2:=\MC{\mathrm{S}(\Orth(3)\times\Orth(3))}^{-1}\left(\tfrac{1}{r_0}\begin{smallbmatrix}0 & -e_2^\top \\ e_2 & 0\end{smallbmatrix}, \tfrac{1}{r_1}\begin{smallbmatrix}0 & -e_2^\top \\ e_2 & 0\end{smallbmatrix}\right)\]
on $\mathrm{S}(\Orth(3)\times\Orth(3))$, so that $\mathrm{q}_{{}_{\Orth(2)}*}\xi_1$ and $\mathrm{q}_{{}_{\Orth(2)}*}\xi_2$ span $D_{r_0/r_1}$. Then, since Maurer--Cartan forms $\MC{G}$ satisfy $[\MC{G}^{-1}(X),\MC{G}^{-1}(Y)]=\MC{G}^{-1}([X,Y])$, we have that
\[[\xi_1,\xi_2]=\MC{\mathrm{S}(\Orth(3)\times\Orth(3))}^{-1}\left(\tfrac{1}{r_0^2}\begin{smallbmatrix}0 & 0 & 0 \\ 0 & 0 & -1 \\ 0 & 1 & 0\end{smallbmatrix},\tfrac{1}{r_1^2}\begin{smallbmatrix}0 & 0 & 0 \\ 0 & 0 & -1 \\ 0 & 1 & 0\end{smallbmatrix}\right)=:2\xi_3,\]
which is not in $\mathrm{q}_{{}_{\Orth(2)}*}^{-1}(D_{r_0/r_1})$ unless $r_0=r_1$, so $D_{r_0/r_1}^{(2)}$ has rank 3. Similarly, we have
\[[\xi_3,\xi_1]=\MC{\mathrm{S}(\Orth(3)\times\Orth(3))}^{-1}\left(\tfrac{1}{2r_0^3}\begin{smallbmatrix}0 & -e_2^\top \\ e_2 & 0\end{smallbmatrix}, \tfrac{1}{2r_1^3}\begin{smallbmatrix}0 & -e_2^\top \\ e_2 & 0\end{smallbmatrix}\right)\]
and
\[[\xi_3,\xi_2]=\MC{\mathrm{S}(\Orth(3)\times\Orth(3))}^{-1}\left(\tfrac{-1}{2r_0^3}\begin{smallbmatrix}0 & -e_1^\top \\ e_1 & 0\end{smallbmatrix}, \tfrac{-1}{2r_1^3}\begin{smallbmatrix}0 & -e_1^\top \\ e_1 & 0\end{smallbmatrix}\right),\]
which are again not in $\mathrm{q}_{{}_{\Orth(2)}*}^{-1}(D_{r_0/r_1}^{(2)})$ unless $r_0=r_1$, from which it follows that $D_{r_0/r_1}^{(3)}$ has rank 5.

Since we want to find and understand local symmetries of certain $(2,3,5)$-distributions, a reasonable starting point would be to figure out how much local symmetry a $(2,3,5)$-distribution can have. This is accomplished by Tanaka prolongation, introduced by Tanaka in \cite{Tanaka1970}, which allows us to build a symmetry algebra of maximal dimension for certain types of distribution. We will briefly describe how this works for $(2,3,5)$-distributions, which will allow us to build the Lie algebra of $\Zhe_2$ directly.

\subsection{A glimpse at Tanaka prolongation}\label{tanakalight} \hfill\\
Borrowing the terminology of geometric control theorists, we want to start by ``nilpotentizing'' the bracket structure of a $(2,3,5)$-distribution, to try to capture its geometric structure with the least complicated Lie algebra that we can.

\begin{definition}The \emph{symbol algebra} of a $(2,3,5)$-distribution $D$ at a point $x\in M$ is the graded 
nilpotent Lie algebra
\[\mathfrak{g}_-:=\mathfrak{g}_{-3}+\mathfrak{g}_{-2}+\mathfrak{g}_{-1},\]
with grading components given by $\mathfrak{g}_{-i}:=D^{(i)}_x/D^{(i-1)}_x$ for each $i$, whose bracket is the one induced by the Lie bracket of vector fields.\end{definition}

To clarify what we mean by the bracket induced by the Lie bracket of vector fields, note that if $\tilde{X}$ is a vector field with values in $D^{(i)}$ representing $X\in\mathfrak{g}_{-i}$, then every other vector field corresponding to $X$ is of the form $\tilde{X}+V$ for some vector field $V$ with values in $D^{(i-1)}$, and similarly, if $\tilde{Y}$ is a vector field with values in $D^{(j)}$ representing $Y\in\mathfrak{g}_{-j}$, then every other vector field corresponding to $Y$ is of the form $\tilde{Y}+W$ for some vector field $W$ with values in $D^{(j-1)}$. Since
\[[\tilde{X}+V,\tilde{Y}+W]_x=[\tilde{X},\tilde{Y}]_x+[V,\tilde{Y}]_x+[\tilde{X},W]_x+[V,W]_x\]
is contained in $[\tilde{X},\tilde{Y}]_x+D^{(i+j-1)}_x$, the choice of extension of $X$ and $Y$ to overlying vector fields therefore does not affect their Lie bracket modulo $D^{(i+j-1)}_x$, so we get a well-defined bracket on the graded vector space $\mathfrak{g}_-$.

The symbol algebra for a $(2,3,5)$-distribution will always be the same up to isomorphism of graded Lie algebras, regardless of the choice of point $x\in M$ or even the choice of distribution itself: for a basis $\{e_1,e_2\}$ of $D_x$, we will have $\mathfrak{g}_{-1}=\langle e_1\rangle\oplus\langle e_2\rangle\approx\mathbb{R}^2$, $\mathfrak{g}_{-2}=\langle[e_1,e_2]\rangle$, and $\mathfrak{g}_{-3}=\langle[[e_1,e_2],e_1]\rangle\oplus\langle[[e_1,e_2],e_2]\rangle$, with $\mathfrak{g}_{-i}=\{0\}$ for all $i>0$, so if we used a different point or different distribution, then for a basis $\{e_1',e_2'\}$ for the distribution at that point, we would have an isomorphism of graded Lie algebras by just sending $e_i\mapsto e_i'$, $[e_1,e_2]\mapsto[e_1',e_2']$, and $[[e_1,e_2],e_i]\mapsto[[e_1',e_2'],e_i']$. As such, it makes sense to refer to $\mathfrak{g}_-$ as ``the'' symbol algebra for $(2,3,5)$-distributions, since they are all isomorphic.

\begin{definition}We define
\[G_0:=\Aut_\mathrm{gr}(\mathfrak{g}_-)=\{\phi\in\Aut(\mathfrak{g}_-):\phi(\mathfrak{g}_{-i})=\mathfrak{g}_{-i}\text{ for each }i\},\] the Lie group of automorphisms of $\mathfrak{g}_-$ as a graded Lie algebra. We may preemptively refer to this as the \emph{Levi subgroup for $(2,3,5)$-distributions}.\end{definition}

This allows us to give a more concretely algebraic description of $\mathfrak{g}_-$.

\begin{proposition}As representations of $G_0\simeq\GLin(\mathfrak{g}_{-1})\simeq\GLin_2\mathbb{R}$, we may identify $\mathfrak{g}_{-1}\approx\mathbb{R}^2$, $\mathfrak{g}_{-2}\approx\Lambda^2\mathbb{R}^2$, and $\mathfrak{g}_{-3}\approx(\Lambda^2\mathbb{R}^2)\otimes\mathbb{R}^2$, so that
\[\mathfrak{g}_-\approx((\Lambda^2\mathbb{R}^2)\otimes\mathbb{R}^2)\oplus\Lambda^2\mathbb{R}^2\oplus\mathbb{R}^2\]
with bracket given by $[v,w]:=2v\wedge w$ and $[v\wedge w,u]:=3v\wedge w\otimes u$ for $v,w,u\in\mathbb{R}^2\approx\mathfrak{g}_{-1}$, and so that $\mathfrak{g}_{-3}\approx(\Lambda^2\mathbb{R}^2)\otimes\mathbb{R}^2$ is central.\end{proposition}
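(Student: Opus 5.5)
The plan is to prove two things: that the restriction map $G_0\to\GLin(\mathfrak{g}_{-1})$ is an isomorphism, and that, in suitable coordinates, the bracket takes the stated form. I would first build the model algebra abstractly and then match it with the symbol algebra.

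Define a graded vector space $\mathfrak{n}:=((\Lambda^2\mathbb{R}^2)\otimes\mathbb{R}^2)\oplus\Lambda^2\mathbb{R}^2\oplus\mathbb{R}^2$. Give it the bracket $[v,w]:=2v\wedge w$ and $[v\wedge w,u]:=3v\wedge w\otimes u$, extended by antisymmetry, with all other brackets of homogeneous components zero. The target spaces have degree $-4$ or lower, so they vanish; in particular $\mathfrak{g}_{-3}$ is central.

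\textbf{Step 1: $\mathfrak{n}$ is a Lie algebra.} Since $\mathfrak{n}$ is graded in degrees $-1,-2,-3$, a Jacobi triple is nontrivial only when its total degree is at least $-3$. That leaves only the triple of three degree $-1$ elements $u,v,w$, which must satisfy
\[[[u,v],w]+[[v,w],u]+[[w,u],v]=0.\]
Here I would use that $\Lambda^2\mathbb{R}^2$ is one-dimensional. Write $u\wedge v=\det(u,v)\,\epsilon$ with $\epsilon:=e_1\wedge e_2$. The identity then reduces to
\[\det(u,v)w+\det(v,w)u+\det(w,u)v=0,\]
which is the classical relation among three vectors in $\mathbb{R}^2$ (Cramer's rule). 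Each of the three structure maps is $\GLin_2\mathbb{R}$-equivariant, so $\GLin_2\mathbb{R}$ acts on $\mathfrak{n}$ by graded automorphisms.

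\textbf{Step 2: identify $\mathfrak{g}_-$ with $\mathfrak{n}$.} Choose a basis $e_1,e_2$ of $\mathfrak{g}_{-1}$. The brackets $[e_1,e_2]$ and $[[e_1,e_2],e_i]$ form bases of $\mathfrak{g}_{-2}$ and $\mathfrak{g}_{-3}$; this is exactly the $(2,3,5)$ rank condition, as in the discussion preceding the proposition. Send
\[e_i\mapsto e_i,\qquad [e_1,e_2]\mapsto 2\epsilon,\qquad [[e_1,e_2],e_i]\mapsto 6\,\epsilon\otimes e_i.\]
This map is a linear isomorphism. It preserves the brackets on basis generators by construction, and both algebras are generated by degree $-1$, so it is a graded Lie algebra isomorphism. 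The only remaining check is the bracket $[e_1,e_1]=0$, which holds on both sides.

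\textbf{Step 3: $G_0\simeq\GLin(\mathfrak{g}_{-1})$.} Because $\mathfrak{g}_-$ is generated by $\mathfrak{g}_{-1}$, any graded automorphism is determined by its restriction to $\mathfrak{g}_{-1}$. So $\phi\mapsto\phi|_{\mathfrak{g}_{-1}}$ is an injective homomorphism $G_0\to\GLin(\mathfrak{g}_{-1})$. Surjectivity follows from Step 1: $A\in\GLin_2\mathbb{R}$ acts by $\Lambda^2A$ and by $\Lambda^2A\otimes A$ on the other components, and this defines a graded automorphism extending $A$. Under this identification the components are the stated representations, and Step 2 gives the isomorphism as $G_0$-representations.

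The main obstacle is Step 1's Jacobi check. It is where the constants $2$ and $3$ could matter, but they do not, because the constants only rescale the degree $-2$ and degree $-3$ pieces and the identity is homogeneous in them. I expect the remaining care to go into making sure no other nonzero brackets, such as $[\mathfrak{g}_{-2},\mathfrak{g}_{-2}]$, are forced.
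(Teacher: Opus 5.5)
Your proof is correct. It shares the paper's skeleton: the restriction map is injective because $\mathfrak{g}_-$ is generated in degree $-1$, and the rank conditions supply the basis $e_i,[e_1,e_2],[[e_1,e_2],e_i]$. The difference is that you run the identification in the opposite direction.

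The paper stays inside $\mathfrak{g}_-$. It extends each $A\in\GLin(\mathfrak{g}_{-1})$ by sending basis vectors to the corresponding brackets of their images. It then computes $A[e_1,e_2]=\det(A)[e_1,e_2]$ and $A[[e_1,e_2],u]=\det(A)[[e_1,e_2],Au]$ to recognize $\Lambda^2\mathbb{R}^2$ and $(\Lambda^2\mathbb{R}^2)\otimes\mathbb{R}^2$. Only afterwards does it rescale the components to get the constants $2$ and $3$. You instead build the target $\mathfrak{n}$ with the constants already in place. Surjectivity and the representation types then follow from the naturality of $\wedge$ and $\otimes$, with no determinant computation. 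The cost is a separate model plus the matching in Step 2.

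Two remarks on your write-up:
\begin{itemize}
\item The Jacobi check in Step 1, which you flag as the main obstacle, is redundant. Step 2 uses only the definitions of the two brackets, so $\psi$ transports the Lie bracket of $\mathfrak{g}_-$ onto $\mathfrak{n}$, and Jacobi holds automatically. Likewise, equivariance of the structure maps alone makes $\GLin_2\mathbb{R}$ act by bracket-preserving bijections.
\item The reason $\psi$ is a homomorphism is not generation in degree $-1$, which by itself does not make a linear map preserve brackets. The reason is the pairwise check on basis vectors: each bracket of two basis vectors is either one of the defining brackets $[e_1,e_2]$, $[[e_1,e_2],e_i]$, or $[e_i,e_i]=0$, or it lands in degree $\le -4$ and vanishes on both sides. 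That same pairwise check is what tacitly justifies the paper's claim that every $A$ extends to an element of $G_0$.
\end{itemize}
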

\begin{proof}Because the symbol algebra $\mathfrak{g}_-$ is generated by $\mathfrak{g}_{-1}$, each $\phi\in G_0$ is uniquely determined by its restriction $\phi|_{\mathfrak{g}_{-1}}\in\GLin(\mathfrak{g}_{-1})$ to $\mathfrak{g}_{-1}$:
\[\phi([v,w])=[\phi(v),\phi(w)]=[\phi|_{\mathfrak{g}_{-1}}(v),\phi|_{\mathfrak{g}_{-1}}(w)],\]
and likewise,
\[\phi([[v,w],u])=[\phi([v,w]),\phi(u)]=[[\phi|_{\mathfrak{g}_{-1}}(v),\phi|_{\mathfrak{g}_{-1}}(w)],\phi|_{\mathfrak{g}_{-1}}(u)].\]
Moreover, every $A\in\GLin(\mathfrak{g}_{-1})$ uniquely determines a corresponding element in $G_0$ given by $e_i\mapsto A(e_i)$, $[e_1,e_2]\mapsto [A(e_1),A(e_2)]$, and $[[e_1,e_2],e_i]\mapsto [[A(e_1),A(e_2)],A(e_i)]$ for a basis $\{e_1,e_2\}$ of $\mathfrak{g}_{-1}\approx\mathbb{R}^2$, so we can conveniently identify $G_0$ with $\GLin(\mathfrak{g}_{-1})\simeq\GLin_2\mathbb{R}$. Under this identification, for an element $A=\begin{smallbmatrix}a & b \\ c & d\end{smallbmatrix}\in\GLin_2\mathbb{R}\simeq G_0$, we have
\begin{align*}A([e_1,e_2]) & =[Ae_1,Ae_2]=[ae_1+ce_2,be_1+de_2] \\ & =(ad-bc)[e_1,e_2]=\det(A)[e_1,e_2]\end{align*}
and
\[A([[e_1,e_2],u])=[A([e_1,e_2]),A(u)]=\det(A)[[e_1,e_2],A(u)]\]
for each $u\in\mathbb{R}^2\approx\mathfrak{g}_{-1}$, so as $\GLin_2\mathbb{R}$-representations, $\mathfrak{g}_{-2}\approx\Lambda^2\mathbb{R}^2$ and $\mathfrak{g}_{-3}\approx(\Lambda^2\mathbb{R}^2)\otimes\mathbb{R}^2$. Therefore, we may write the symbol algebra as the graded $\GLin_2\mathbb{R}$-representation
\[\mathfrak{g}_-\approx((\Lambda^2\mathbb{R}^2)\otimes\mathbb{R}^2)\oplus\Lambda^2\mathbb{R}^2\oplus\mathbb{R}^2,\]
and by rescaling each grading component as necessary, we may write the bracket for $\mathfrak{g}_-$ so that $[e_1,e_2]:=2e_1\wedge e_2$ and $[e_1\wedge e_2,u]:=3e_1\wedge e_2\otimes u$ for $u\in\mathbb{R}^2\approx\mathfrak{g}_{-1}$.\mbox{\qedhere}\end{proof}

The general behavior of this graded nilpotent Lie algebra $\mathfrak{g}_-$ can be captured by the diagram in Figure \ref{gminus_roots}. Each node of the diagram represents a $1$-dimensional subspace in a grading component of $\mathfrak{g}_-$, and the bracket of elements in these subspaces is given by adding together the vectors ending in those nodes: if the result is another node on the diagram, then the bracket is in that node's $1$-dimensional subspace, and if the result runs off of the diagram, then the bracket is $0$.

\begin{figure}[h]
\centering\includegraphics[width=0.4\textwidth]{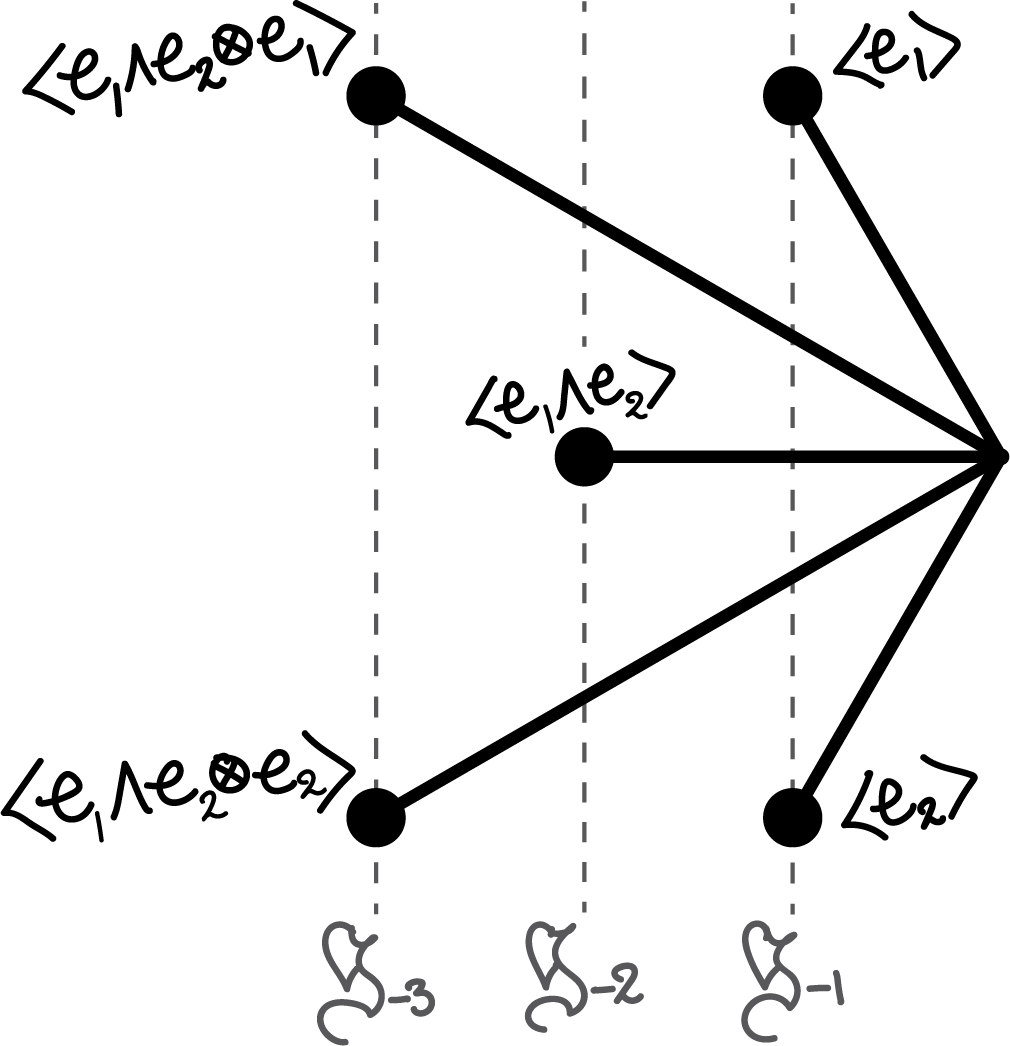}
\caption{A diagram to help keep track of the bracket on $\mathfrak{g}_-$, with the grading components $\mathfrak{g}_{-3}$, $\mathfrak{g}_{-2}$, and $\mathfrak{g}_{-1}$ labeled by dashed vertical gray lines}
\label{gminus_roots}
\end{figure}

Defining $G_-$ to be the simply connected nilpotent Lie group with Lie algebra $\mathfrak{g}_-$, we get a natural $(2,3,5)$-distribution $D:=\MC{G_-}^{-1}(\mathfrak{g}_{-1})$ on $G_-$, since $[\MC{G_-}^{-1}(X),\MC{G_-}^{-1}(Y)]=\MC{G_-}^{-1}([X,Y])$ for all $X,Y\in\mathfrak{g}_-$. By construction, this distribution is invariant under left-translation by the elements of $G_-$. Moreover, each $\phi\in G_0\simeq\GLin_2\mathbb{R}$ determines a unique corresponding Lie automorphism of $G_-$ given by $\exp(X)\mapsto\exp(\phi(X))$ for $X\in\mathfrak{g}_-$, since $G_-$ is simply connected; denoting this automorphism by the same symbol $\phi$, we then have that $\phi^*\MC{G_-}=\phi(\MC{G_-})$, so because elements of $G_0$ preserve $\mathfrak{g}_{-1}$, each $\phi\in G_0$ gives us a new symmetry of the distribution $D=\MC{G_-}^{-1}(\mathfrak{g}_{-1})$. Combining these transformations into the semidirect product $G_-\rtimes G_0$ gives us the full symmetry group for this distribution.

Thus, we have a transitive action of a Lie group $G_-G_0:=G_-\rtimes G_0$ on a connected manifold $G_-\cong G_-G_0/G_0$, which means we have a model geometry $(G_-G_0,G_0)$.

\begin{definition}We may refer to $(G_-G_0,G_0)$ as the \emph{model geometry for graded $(2,3,5)$-structures}.\end{definition}

Note that this model geometry is reductive, as $\mathfrak{g}_-$ is an $\Ad_{G_0}$-invariant complement to $\mathfrak{g}_0$ in $\mathfrak{g}_-+\mathfrak{g}_0$. By analogy with Euclidean and affine geometry, we can imagine $G_-$ as a 5-dimensional nilpotent subgroup of ``translations'' acting freely and transitively on $G_-\cong G_-G_0/G_0$, with the isotropy subgroup $G_0$ acting by analogues of linear transformations on this nilpotent translation space. Thinking of $G_-G_0$ as a principal $G_0$-bundle over $G_-\cong G_-G_0/G_0$, we may identify it with the bundle of frames for the invariant rank 2 distribution $D=\MC{G_-}^{-1}(\mathfrak{g}_{-1})$, with fiber over $x\in G_-$ given by the space of linear isomorphisms from $\mathbb{R}^2$ to $D_x$ and right-action by $G_0\simeq\GLin_2\mathbb{R}$ given by precomposition, so that if $\phi:\mathbb{R}^2\to D_x$ is a frame over $x$ for the distribution $D$ and $A\in\GLin_2\mathbb{R}$, then $\phi\cdot A:=\phi\circ A$. In other words, right-translation by an element of $G_0$ amounts to a linear change of frame within the distribution $D$.

Since we have expanded our Lie group of symmetries, let us also expand our diagram from Figure \ref{gminus_roots} to help us keep track of the bracket in this larger Lie algebra $\mathfrak{g}_-+\mathfrak{g}_0$. The element $\begin{smallbmatrix}0 & 1 \\ 0 & 0\end{smallbmatrix}\in\mathfrak{g}_0\approx\mathfrak{gl}_2\mathbb{R}$ acts on $\mathfrak{g}_{-1}\approx\mathbb{R}^2$ by sending $e_2=\begin{smallbmatrix}0 \\ 1\end{smallbmatrix}$ to $e_1=\begin{smallbmatrix}1 \\ 0\end{smallbmatrix}$ and $e_1$ to $0$. It also sends the element $e_1\wedge e_2\in\mathfrak{g}_{-2}$ to $0$, and $e_1\wedge e_2\otimes e_2\in\mathfrak{g}_{-3}$ gets sent to $e_1\wedge e_2\otimes e_1$, which then gets sent to $0$ too. In our diagram, we can therefore represent the subspace spanned by $\begin{smallbmatrix}0 & 1 \\ 0 & 0\end{smallbmatrix}\in\mathfrak{g}_0$ by the upward vector that we could add to the node for $e_2$ to get to the node for $e_1$. Similarly, we can add a downward vector for $\begin{smallbmatrix}0 & 0 \\ 1 & 0\end{smallbmatrix}$. The subalgebra of diagonal matrices in $\mathfrak{g}_0\approx\mathfrak{gl}_2\mathbb{R}$ only rescales the subspaces represented by the other nodes, including the two we just added, so bracketing with them does not change the subspace in the Lie algebra; we can indicate this by imagining this 2-dimensional subalgebra $\mathfrak{a}=\langle\begin{smallbmatrix}1 & 0 \\ 0 & 0\end{smallbmatrix},\begin{smallbmatrix}0 & 0 \\ 0 & 1\end{smallbmatrix}\rangle$ as being represented by the zero vector in the diagram. The new diagram resulting from these additions can be seen in Figure \ref{gminusandg0}.

\begin{figure}[h]
\centering\includegraphics[width=0.5\textwidth]{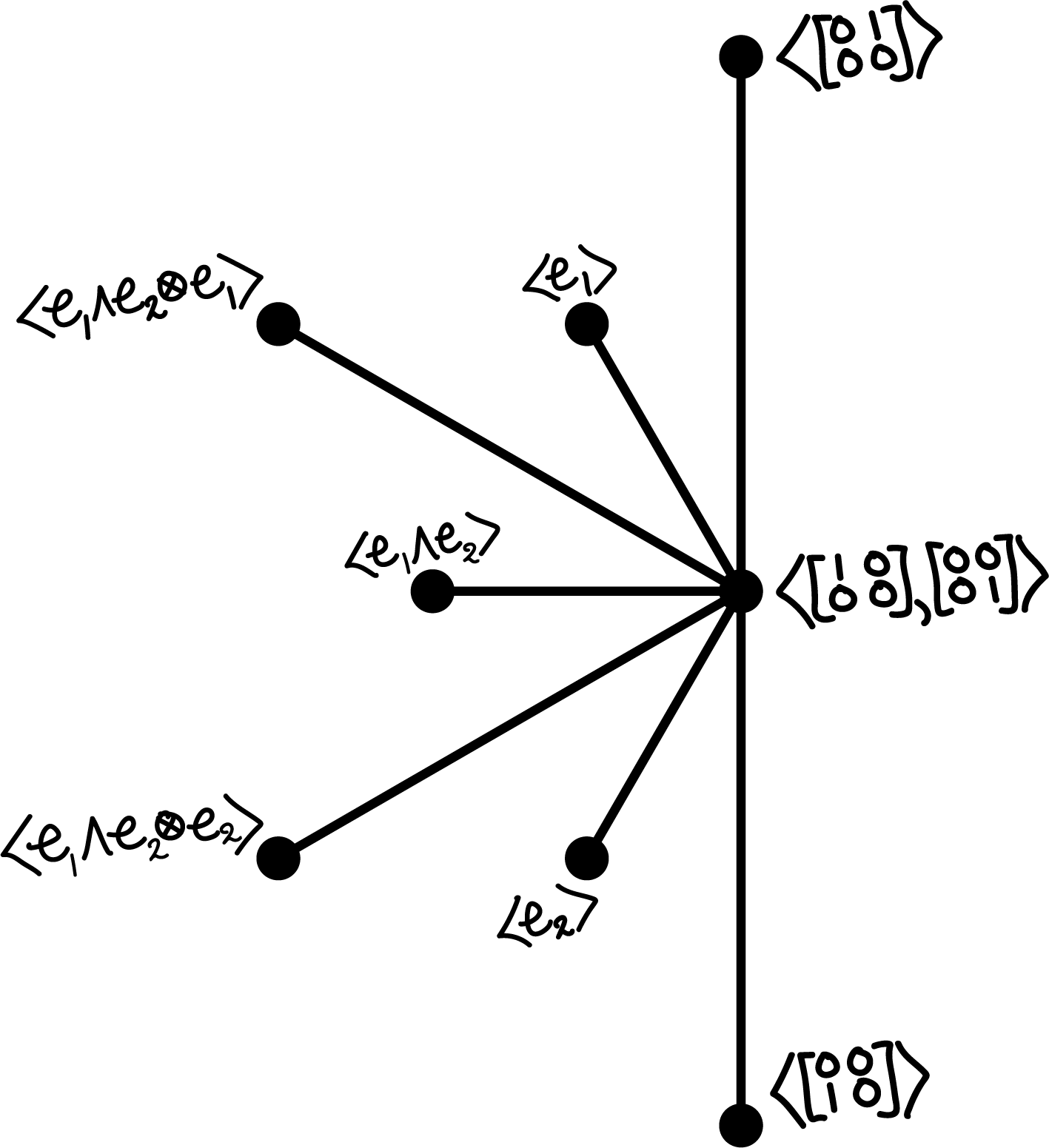}
\caption{A diagram to help keep track of the bracket on $\mathfrak{g}_-+\mathfrak{g}_0$, given by adding the nodes from $\mathfrak{g}_0$ to the previous diagram as indicated}
\label{gminusandg0}
\end{figure}

The symmetries in $G_-G_0$ not only preserve the $(2,3,5)$-distribution
\begin{align*}D=\MC{G_-}^{-1}(\mathfrak{g}_{-1}) & =\mathrm{q}_{{}_{G_0}*}\MC{G_-G_0}^{-1}(\mathfrak{g}_{-1}) \\ & =\mathrm{q}_{{}_{G_0}*}\MC{G_-G_0}^{-1}(\mathfrak{g}_{-1}+\mathfrak{g}_0)\end{align*}
on $G_-$, but also the invariant distributions $\MC{G_-}^{-1}(\mathfrak{g}_{-2})=\mathrm{q}_{{}_{G_0}*}\MC{G_-G_0}^{-1}(\mathfrak{g}_{-2})$ and $\MC{G_-}^{-1}(\mathfrak{g}_{-3})=\mathrm{q}_{{}_{G_0}*}\MC{G_-G_0}^{-1}(\mathfrak{g}_{-3})$ as well. Altogether, this tells us that $G_-G_0$ preserves a grading on the tangent bundle of $G_-\cong G_-G_0/G_0$. Preserving this graded structure is a considerably stronger condition on our symmetries than just preserving the distribution. Indeed, since
\begin{align*}D^{(2)}=[D,D]+D & =\MC{G_-}^{-1}(\mathfrak{g}_{-2}+\mathfrak{g}_{-1}) \\ & =\mathrm{q}_{{}_{G_0}*}\MC{G_-G_0}^{-1}(\mathfrak{g}_{-2}+\mathfrak{g}_{-1}+\mathfrak{g}_0)\end{align*}
and
\begin{align*}D^{(3)}=TG_- & =\MC{G_-}^{-1}(\mathfrak{g}_{-3}+\mathfrak{g}_{-2}+\mathfrak{g}_{-1}) \\ & =\mathrm{q}_{{}_{G_0}*}\MC{G_-G_0}^{-1}(\mathfrak{g}_{-3}+\mathfrak{g}_{-2}+\mathfrak{g}_{-1}+\mathfrak{g}_0),\end{align*}
preserving the distribution $D$ is equivalent to simply preserving the underlying filtration $D^{(3)}\supset D^{(2)}\supset D$ induced by successively adding up those grading components. The idea of Tanaka prolongation, then, is to find the additional local infinitesimal symmetries for the filtration by algebraically constructing new higher grading components $\mathfrak{g}_i$ (with $i>0$) to add to our existing graded symmetry algebra. Each of these positive grading components $\mathfrak{g}_i$ will break the graded structure on the tangent bundle because they will allow the degree of the grading to be pushed upward, but this will still preserve the underlying filtration.

Let us demonstrate how the construction works by building $\mathfrak{g}_1$. By the definition of graded Lie algebras, we want $[\mathfrak{g}_i,\mathfrak{g}_j]\subseteq\mathfrak{g}_{i+j}$ for all $i,j\in\mathbb{Z}$, where we again take $\mathfrak{g}_i=\{0\}$ for $i<-3$. In particular, we should have $[\mathfrak{g}_1,\mathfrak{g}_i]\subseteq\mathfrak{g}_{i+1}$ for all $i<0$, so each $\alpha\in\mathfrak{g}_1$ induces a linear map $\tilde{\alpha}:=\ad_\alpha|_{\mathfrak{g}_-}:\mathfrak{g}_-\to\mathfrak{g}_-+\mathfrak{g}_0$. This linear map $\tilde{\alpha}$ uniquely determines $\alpha$, since if $\tilde{\alpha}=0$, then the local symmetries generated by $\alpha$ preserve the grading on $G_-$, so ${\alpha\in(\mathfrak{g}_-+\mathfrak{g}_0)\cap\mathfrak{g}_1=\{0\}}$. Moreover, the bracket on $\mathfrak{g}$ must satisfy the Jacobi identity, so $\tilde{\alpha}=\ad_\alpha|_{\mathfrak{g}_-}$ must satisfy $\tilde{\alpha}([X,Y])=[\tilde{\alpha}(X),Y]+[X,\tilde{\alpha}(Y)]$ for all $X,Y\in\mathfrak{g}_-$, and since $\mathfrak{g}_-$ is generated by $\mathfrak{g}_{-1}$, it follows that $\tilde{\alpha}$---and hence $\alpha$---is uniquely determined by its restriction $\tilde{\alpha}|_{\mathfrak{g}_{-1}}=\ad_\alpha|_{\mathfrak{g}_{-1}}$ to $\mathfrak{g}_{-1}$:
\begin{align*}\tilde{\alpha}(e_1\wedge e_2) & =\frac{1}{2}\tilde{\alpha}([e_1,e_2])=\frac{1}{2}([\tilde{\alpha}(e_1),e_2]+[e_1,\tilde{\alpha}(e_2)]) \\ & =\frac{1}{2}(\tilde{\alpha}(e_1)e_2-\tilde{\alpha}(e_2)e_1),\end{align*}
and similarly,
\begin{align*}\tilde{\alpha}(e_1\wedge e_2\otimes u) & =\frac{1}{3}\tilde{\alpha}([e_1\wedge e_2,u]) \\ & =\frac{1}{3}([\tilde{\alpha}(e_1\wedge e_2),u]+[e_1\wedge e_2,\tilde{\alpha}(u)]) \\ & =\frac{1}{3}(2(\tilde{\alpha}(e_1)e_2-\tilde{\alpha}(e_2)e_1)\wedge u-\mathrm{tr}(\tilde{\alpha}(u))e_1\wedge e_2) \\ & =-c_\alpha(u)e_1\wedge e_2\end{align*}
for some linear functional $c_\alpha\in(\mathbb{R}^2)^\vee$ in the dual of $\mathbb{R}^2\approx\mathfrak{g}_{-1}$.

Since $\mathfrak{g}_{-3}$ is central in $\mathfrak{g}_-$, though, we must also have
\begin{align*}0 & =\tilde{\alpha}([e_1\wedge e_2\otimes u,v])=[\tilde{\alpha}(e_1\wedge e_2\otimes u),v]+[e_1\wedge e_2\otimes u,\tilde{\alpha}(v)] \\ & =-3 c_\alpha(u)e_1\wedge e_2\otimes v-\tilde{\alpha}(v)\cdot(e_1\wedge e_2\otimes u),\end{align*}
so $\tilde{\alpha}(v)$ needs to be the (unique) element of $\mathfrak{g}_0\approx\mathfrak{gl}_2\mathbb{R}$ such that, for each $u\in\mathbb{R}^2$,
\[\tilde{\alpha}(v)\cdot(e_1\wedge e_2\otimes u)=e_1\wedge e_2\otimes(\tilde{\alpha}(v)u+\mathrm{tr}(\tilde{\alpha}(v))u)\]
is equal to $-3 c_\alpha(u)e_1\wedge e_2\otimes v$; since $\mathfrak{g}_{-1}\approx\mathbb{R}^2$ is 2-dimensional, this means that
\[\tilde{\alpha}(v)=-3c_\alpha\otimes v-\tfrac{1}{2+1}(-3c_\alpha(v))\mathds{1}=c_\alpha(v)\mathds{1}-3c_\alpha\otimes v.\]
In particular, $\alpha$ is uniquely determined by the linear functional $c_\alpha$ given by $c_\alpha(v)=-\mathrm{tr}(\tilde{\alpha}(v))$, and because each linear functional $c_\alpha\in(\mathbb{R}^2)^\vee$ defines a corresponding $\tilde{\alpha}$ by setting $\tilde{\alpha}(v)=c_\alpha(v)\mathds{1}-3c_\alpha\otimes v$, we may identify $\mathfrak{g}_1$ with $(\mathbb{R}^2)^\vee\approx(\mathfrak{g}_{-1})^\vee$. This identification is more than just an isomorphism of vector spaces: identifying $\alpha\in\mathfrak{g}_1$ with its linear functional ${c_\alpha\in(\mathbb{R}^2)^\vee}$, we have, for $A\in\GLin_2\mathbb{R}\simeq G_0$ and $v\in\mathbb{R}^2\approx\mathfrak{g}_{-1}$,
\begin{align*}\widetilde{\Ad_A(\alpha)}(v) & =[\Ad_A(\alpha),v]=\Ad_A[\alpha,\Ad_{A^{-1}}(v)]=\Ad_A(\tilde{\alpha}(A^{-1}(v))) \\ & =A\left(\alpha(A^{-1}v)\mathds{1}-3\alpha\otimes A^{-1}v\right)A^{-1} \\ & =(\alpha\circ A^{-1})(v)\mathds{1}-3(\alpha\circ A^{-1})\otimes v \\ & =\widetilde{\alpha\circ A^{-1}}(v),\end{align*}
so $\mathfrak{g}_1\approx(\mathbb{R}^2)^\vee$ as $\GLin_2\mathbb{R}$-representations as well.

This process for building higher-order local infinitesimal symmetries continues in much the same way for each positive grading component, successively defining
\[\mathfrak{g}_i\approx\ad_{\mathfrak{g}_i}|_{\mathfrak{g}_-}:=\{\zeta\in\mathrm{Der}(\mathfrak{g}_-;{\textstyle\sum}_{j<i}\mathfrak{g}_j):\zeta(\mathfrak{g}_j)\subseteq\mathfrak{g}_{i+j}\text{ for }j<0\}\]
for each $i>0$, where
\[\mathrm{Der}(\mathfrak{g}_-;V):=\left\{\zeta\in\mathfrak{g}_-^\vee\otimes V:\begin{array}{c}\zeta([X,Y])=X\cdot\zeta(Y)-Y\cdot\zeta(X) \\ \text{for all }X,Y\in\mathfrak{g}_-\end{array}\right\}\]
is the space of \emph{derivations} from the Lie algebra $\mathfrak{g}_-$ to a representation $V$ of $\mathfrak{g}_-$. Ultimately, we find that, as representations of $G_0\simeq\GLin_2\mathbb{R}$, $\mathfrak{g}_2\approx\Lambda^2(\mathbb{R}^2)^\vee$ and $\mathfrak{g}_3\approx(\Lambda^2(\mathbb{R}^2)^\vee)\otimes(\mathbb{R}^2)^\vee$, with all grading components $\mathfrak{g}_i$ for $i>3$ vanishing; see Appendix A of \cite{EricksonThesis} for details. 

\begin{definition}\label{tanakaprolongation} The \emph{Tanaka prolongation of $\mathfrak{g}_-+\mathfrak{g}_0$} is the result of the construction described above. In our case, it is the finite-dimensional graded Lie algebra $\mathfrak{g}:=\sum_{i=-3}^3\mathfrak{g}_i$, with grading components
\begin{gather*}\mathfrak{g}_{-3}\approx(\Lambda^2\mathbb{R}^2)\otimes\mathbb{R}^2,~ \mathfrak{g}_{-2}\approx\Lambda^2\mathbb{R}^2,~ \mathfrak{g}_{-1}\approx\mathbb{R}^2, \\ \mathfrak{g}_0\approx\mathfrak{gl}_2\mathbb{R}, \\ \mathfrak{g}_1\approx(\mathbb{R}^2)^\vee,~ \mathfrak{g}_2\approx\Lambda^2(\mathbb{R}^2)^\vee, \text{ and}~ \mathfrak{g}_3\approx(\Lambda^2(\mathbb{R}^2)^\vee)\otimes(\mathbb{R}^2)^\vee,\end{gather*}
with bracket given by, for $v,w,u\in\mathbb{R}^2\approx\mathfrak{g}_{-1}$ and $\alpha,\alpha',\beta\in(\mathbb{R}^2)^\vee\approx\mathfrak{g}_1$:
\begin{gather*}[v,w]=2v\wedge w, [v\wedge w,u]=3v\wedge w\otimes u, \\
[\alpha,\alpha']=-2\alpha\wedge\alpha',~ [\alpha\wedge\alpha',\beta]=-3\alpha\wedge\alpha'\otimes\beta, \\
[\alpha,v]=\alpha(v)\mathds{1}-3\alpha\otimes v,~ [\alpha,v\wedge w]=2\iota_\alpha(v\wedge w), \\
[\alpha\wedge\alpha',v\wedge w]=-\alpha\wedge\alpha'(v\wedge w)\mathds{1},~ [\alpha\wedge\alpha',v]=2\iota_v(\alpha\wedge\alpha'), \\
[\alpha,v\wedge w\otimes u]=-\alpha(u)v\wedge w,~[\alpha\wedge\alpha',v\wedge w\otimes u]=\alpha\wedge\alpha'(v\wedge w)\, u, \\
[\alpha\wedge\alpha'\otimes\beta,v]=-\beta(v)\alpha\wedge\alpha',~ [\alpha\wedge\alpha'\otimes\beta,v\wedge w]=\alpha\wedge\alpha'(v\wedge w)\,\beta, \\
[\alpha\wedge\alpha'\otimes\beta,v\wedge w\otimes u]=-\alpha\wedge\alpha'(v\wedge w)\,\beta\otimes u,\end{gather*}
together with the adjoint action of $\mathfrak{g}_0\approx\mathfrak{gl}_2\mathbb{R}$ on each $\mathfrak{g}_i$ corresponding to the action of $\mathfrak{gl}_2\mathbb{R}$ on $\mathfrak{g}_i$ as a $\GLin_2\mathbb{R}$-representation. Here, we write $\alpha\wedge\alpha'(v\wedge w):=\alpha(v)\alpha'(w)-\alpha(w)\alpha'(v)$, $\iota_v(\alpha\wedge\alpha'):=\alpha(v)\alpha'-\alpha'(v)\alpha$, and $\iota_\alpha(v\wedge w):=\alpha(v)w-\alpha(w)v$.
\end{definition}

Since this presentation of the Lie algebra $\mathfrak{g}$ is a bit overwhelming to take in all at once, it is instructive to extend the diagrams we built in Figures \ref{gminus_roots} and \ref{gminusandg0} to include the new positive grading components, so that we can keep track of what the bracket does. The result of this, shown in Figure \ref{bracket_rootdiagram}, is visibly just the root system of type $\mathrm{G}_2$.

\begin{figure}[h]
\centering\includegraphics[width=0.65\textwidth]{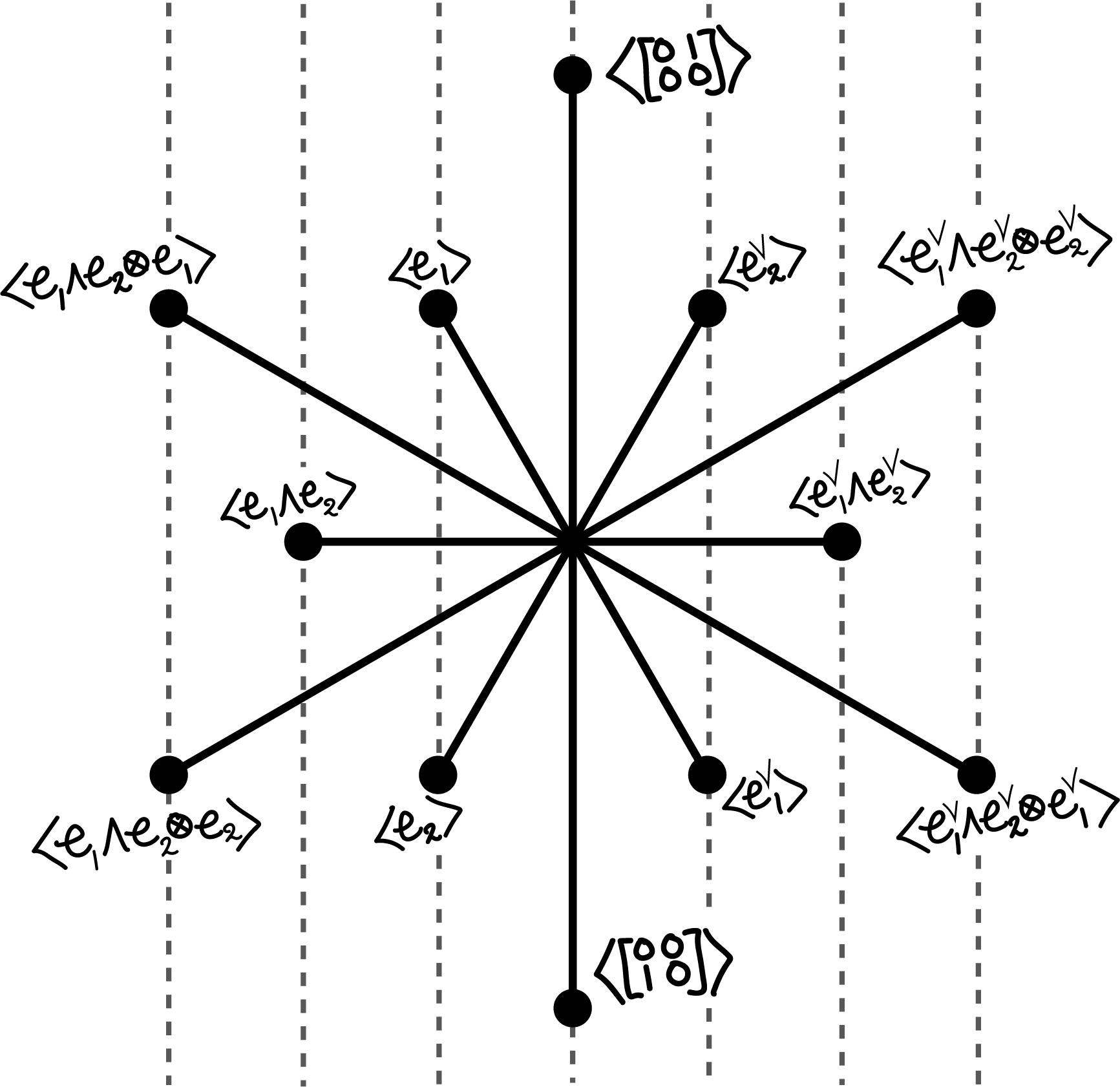}
\caption{A diagram to help keep track of the bracket on $\mathfrak{g}$, with each grading component indicated by a dashed gray vertical line through the corresponding nodes}
\label{bracket_rootdiagram}
\end{figure}

Whenever we have a Lie algebra $\mathfrak{s}$ of local infinitesimal symmetries for a $(2,3,5)$-distribution $D$ on a 5-manifold $M$, it must embed---as a subspace, though not necessarily as a subalgebra---into this prolonged Lie algebra $\mathfrak{g}$ that we have just constructed. To see this, we build a natural filtration on $\mathfrak{s}$ as follows: fixing a point $x\in M$, we define
\[\mathfrak{s}^0:=\{\eta\in\mathfrak{s}:\eta_x=0\}=\{\eta\in\mathfrak{s}:\eta_x\in D^{(0)}_x\}\]
and, for each $i>0$,
$\mathfrak{s}^{-i}:=\{\eta\in\mathfrak{s}:\eta_x\in D^{(i)}_x\}$ and
\[\mathfrak{s}^i:=\{\eta\in\mathfrak{s}^0:[\eta,D^{(j)}]_x\subseteq D^{(j-i)}_x\text{ for each }j\geq 0\}.\]
This makes $\mathfrak{s}$ into a filtered Lie algebra, which allows us to define the \emph{graded image} $\mathrm{gr}(\mathfrak{s})$ of $\mathfrak{s}$ to be the graded Lie algebra with grading components $\mathrm{gr}_i(\mathfrak{s}):=\mathfrak{s}^i/\mathfrak{s}^{i+1}$ and bracket given by, for $\eta+\mathfrak{s}^{i+1}\in\mathrm{gr}_i(\mathfrak{s})$ and $\eta'+\mathfrak{s}^{j+1}\in\mathrm{gr}_j(\mathfrak{s})$,
\[[\eta+\mathfrak{s}^{i+1},\eta'+\mathfrak{s}^{j+1}]:=[\eta,\eta']+\mathfrak{s}^{i+j+1}\in\mathrm{gr}_{i+j}(\mathfrak{s}).\]

\begin{proposition}[see also Theorem 8.4 of \cite{Tanaka1970}]\label{gradedimageprop} If $\mathfrak{s}$ is a Lie algebra of local infinitesimal symmetries as above for a $(2,3,5)$-distribution $D$ on a connected 5-manifold $M$, then $\mathrm{gr}(\mathfrak{s})$ embeds as a graded subalgebra of the graded Lie algebra $\mathfrak{g}=\sum_{i=-3}^3\mathfrak{g}_i$ from Definition \ref{tanakaprolongation}.\end{proposition}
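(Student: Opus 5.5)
The plan is to build, degree by degree, an injective linear map $\mathrm{gr}_i(\mathfrak{s})\to\mathfrak{g}_i$, and then check that these maps respect brackets. The construction uses the Tanaka prolongation of Definition \ref{tanakaprolongation} exactly as it was built: via restrictions of adjoint actions to $\mathfrak{g}_-$.

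\emph{Negative degrees.} For $i>0$, send $\eta+\mathfrak{s}^{-i+1}\in\mathrm{gr}_{-i}(\mathfrak{s})$ to the class $\eta_x+D^{(i-1)}_x\in D^{(i)}_x/D^{(i-1)}_x=\mathfrak{g}_{-i}$. This is well defined and injective directly from the definition of $\mathfrak{s}^{-i}$.

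\emph{Bracket compatibility in negative degrees.} For $\eta\in\mathfrak{s}^{-i}$ and $\eta'\in\mathfrak{s}^{-j}$, we must show that $[\eta,\eta']_x$ lies in $D^{(i+j)}_x$ and agrees, modulo $D^{(i+j-1)}_x$, with the symbol bracket of $\eta_x$ and $\eta'_x$.
\begin{itemize}
\item Fix a local frame $X_1,\dots,X_5$ adapted to the filtration $D\subset D^{(2)}\subset D^{(3)}$, with weights $w_a\in\{1,2,3\}$. Write $\eta=\sum f_aX_a$, so that $f_a(x)=0$ whenever $w_a>i$.
\item Since $\eta$ is a symmetry, $[\eta,X_b]\in\Gamma(D^{(w_b)})$ for every $b$. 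Expanding this gives linear constraints on the first derivatives $X_b(f_a)(x)$: namely $X_b(f_a)(x)$ is determined by the constant terms $f_c(x)$ and the structure functions, up to terms of weight at most $w_b$.
\item Using these constraints, I will show that $[\eta,Y]_x\in D^{(i+j)}_x$ for every section $Y$ of $D^{(j)}$, and that this bracket equals the symbol bracket modulo $D^{(i+j-1)}_x$. Applying this with $Y=\eta'$ written in the frame gives the claim.
\end{itemize}
This weighted order-of-vanishing bookkeeping is the step I expect to be the main obstacle. I would try to organize it cleanly by proving the following statement by induction on weights, using only that symmetries preserve each $D^{(k)}$:
\[\eta\in\mathfrak{s}^{k}\ \Longrightarrow\ [\eta,\Gamma(D^{(j)})]_x\subseteq D^{(j-k)}_x\quad\text{for all } k\in\mathbb{Z},\ j\geq 0.\]
For $k>0$ this statement is the definition of $\mathfrak{s}^k$; for $k\leq 0$ it is the content of the adapted-frame computation above.

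\emph{Nonnegative degrees.} For $\eta\in\mathfrak{s}^k$ with $k\geq 0$, the statement just displayed shows that $Y_x\mapsto[\eta,Y]_x$ induces well-defined maps $\mathfrak{g}_{-j}\to\mathfrak{g}_{k-j}$ whenever $j>k$. Combining these with the already-defined negative-degree maps, $\eta$ yields a graded map
\[\zeta_\eta:\mathfrak{g}_-\longrightarrow{\textstyle\sum_{l<k}}\mathfrak{g}_l,\qquad \zeta_\eta(\mathfrak{g}_{-j})\subseteq\mathfrak{g}_{k-j},\]
constructed inductively on $k$: the components landing in $\mathfrak{g}_{l}$ with $0\leq l<k$ use the embeddings already built in lower nonnegative degrees. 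The Jacobi identity for vector fields, evaluated at $x$ with the compatibility established above, shows that $\zeta_\eta$ is a derivation. Hence $\zeta_\eta$ lies in $\mathfrak{g}_k$ as defined in the prolongation. (For $k=0$, $\zeta_\eta$ is a graded derivation of $\mathfrak{g}_-$, i.e.\ an element of $\mathfrak{g}_0=\mathrm{Lie}(G_0)$.) The map $\eta\mapsto\zeta_\eta$ kills $\mathfrak{s}^{k+1}$: by the definition of $\mathfrak{s}^{k+1}$, the values $[\eta,D^{(j)}]_x$ then lie one filtration step lower, so every component of $\zeta_\eta$ vanishes modulo the relevant quotient.

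\emph{Injectivity and brackets in nonnegative degrees.} For injectivity, suppose $\zeta_\eta=0$ for $\eta\in\mathfrak{s}^k$. Then $[\eta,D^{(j)}]_x\subseteq D^{(j-k-1)}_x$ for all $j$, i.e.\ $\eta\in\mathfrak{s}^{k+1}$, because $\mathfrak{g}_-$ is generated by $\mathfrak{g}_{-1}$ and the derivation property propagates the vanishing from $\mathfrak{g}_{-1}$ to all of $\mathfrak{g}_-$. For bracket compatibility in nonnegative degrees, the Jacobi identity gives $\zeta_{[\eta,\eta']}=[\zeta_\eta,\zeta_{\eta'}]$ as derivations. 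The brackets of $\mathfrak{g}$ in positive degrees are by construction exactly commutators of such derivations restricted to $\mathfrak{g}_-$, and an element of $\mathfrak{g}_k$ is determined by its restriction to $\mathfrak{g}_-$. Together these show that the degreewise maps assemble into an injective graded Lie algebra homomorphism $\mathrm{gr}(\mathfrak{s})\hookrightarrow\mathfrak{g}$.
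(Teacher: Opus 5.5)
Your architecture matches the paper's: evaluation at $x$ in negative degrees, the induced graded derivation of $\mathfrak{g}_-$ (equivalently the action on $D_x$) in degree $0$, and derivations built from brackets in positive degrees, with kernel the next filtration step. The paper handles negative-degree bracket compatibility in one sentence. You rightly flag it as the crux, but your plan for it fails. For $\eta\in\mathfrak{s}^{-i}$ and any section $Y$ of $D^{(j)}$, the field $[\eta,Y]$ is again a section of $D^{(j)}$ because $\eta$ is a symmetry. So $[\eta,Y]_x\in D^{(j)}_x\subseteq D^{(i+j-1)}_x$ represents $0$ in $\mathfrak{g}_{-i-j}$, not the symbol bracket of $\eta_x$ and $Y_x$. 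For the same reason, your displayed statement is automatic for $k\leq 0$ and says nothing about brackets. Nor can you then set $Y=\eta'$, since $\eta'$ lies in $D^{(j)}$ only at $x$. The contribution that matters is the derivative of the coefficients of $\eta'$ along $\eta_x$. That derivative is controlled by the symmetry of $\eta'$, whereas your frame constraints come from $\eta$ alone. Finally, the target itself is off by a sign.

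The repair is short. Choose sections $\tilde\eta$ of $D^{(i)}$ and $\tilde\eta'$ of $D^{(j)}$ with $\tilde\eta_x=\eta_x$ and $\tilde\eta'_x=\eta'_x$. Since $\eta-\tilde\eta$ and $\eta'-\tilde\eta'$ both vanish at $x$, so does their bracket, whence
\[[\eta,\eta']_x=[\eta,\tilde\eta']_x+[\tilde\eta,\eta']_x-[\tilde\eta,\tilde\eta']_x.\]
The first two terms lie in $D^{(j)}_x$ and $D^{(i)}_x$ by the symmetry of $\eta$ and of $\eta'$, respectively. Hence $[\eta,\eta']_x\in D^{(i+j)}_x$ and $[\eta,\eta']_x\equiv-[\tilde\eta,\tilde\eta']_x$ modulo $D^{(i+j-1)}_x$. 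So evaluation is an anti-homomorphism (compare right- and left-invariant fields on $G_-$), and the negative-degree embedding should be $\eta\mapsto-\eta_x$. With that sign, compatibility with your degree-$0$ map follows from $[\eta,\eta'-\tilde\eta']_x=0$ for $\eta\in\mathfrak{s}^0$.

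The sign must then be tracked in positive degrees. There, the components with values in $\mathfrak{g}_l$, $l\geq 0$, cannot be obtained by feeding the non-symmetry $[\eta,X]$ into the lower-degree recipe; they are forced by compatibility. For instance, for $\eta\in\mathfrak{s}^1$ and sections $X,Z$ of $D$, the $\mathfrak{g}_{-1}\to\mathfrak{g}_0$ component must send $X_x$ to $Z_x\mapsto[X,[\eta,Z]]_x=[\eta,[X,Z]]_x-[[\eta,X],Z]_x$, not to the naive $Z_x\mapsto[[\eta,X],Z]_x$.
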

\begin{proof}By definition, $\mathfrak{s}^{-i}_x=\{\eta_x:\eta\in\mathfrak{s}^{-i}\}\subseteq D^{(i)}_x$, so for each $i>0$,
\[\mathrm{gr}_{-i}(\mathfrak{s})\approx\mathfrak{s}^{-i}_x/\mathfrak{s}^{-i+1}_x=\mathfrak{s}^{-i}_x/\mathfrak{s}^{-i}_x\cap D^{(i-1)}_x\]
embeds into $\mathfrak{g}_{-i}\approx D^{(i)}_x/D^{(i-1)}_x$. Since the bracket is the one induced by the Lie bracket of vector fields, this makes $\mathrm{gr}_{-3}(\mathfrak{s})+\mathrm{gr}_{-2}(\mathfrak{s})+\mathrm{gr}_{-1}(\mathfrak{s})$ into a graded subalgebra of the symbol algebra $\mathfrak{g}_-$.

Because $\mathfrak{s}$ satisfies $[\mathfrak{s},D]\subseteq D$ by virtue of being a Lie algebra of local infinitesimal symmetries, the Lie bracket of vector fields induces a representation of $\mathfrak{s}^0$ on $D_x$ with kernel $\mathfrak{s}^1$, so that the resulting quotient $\mathrm{gr}_0(\mathfrak{s})=\mathfrak{s}^0/\mathfrak{s}^1$ naturally embeds into $\mathfrak{gl}(D_x)\approx\mathfrak{g}_0$. This is induced by the Lie bracket as well, so we similarly get that $\sum_{i\leq 0}\mathrm{gr}_i(\mathfrak{s})$ is a graded subalgebra of $\mathfrak{g}_-+\mathfrak{g}_0$.

Likewise, each $\eta\in\mathfrak{s}^1$ determines a map $\tilde{\eta}:\mathfrak{g}_-\to\mathfrak{g}_-+\mathfrak{g}_0$ from the graded tangent space $\mathrm{gr}(T_xM)\approx\mathfrak{g}_-$ to $\mathfrak{g}_-+\mathfrak{g}_0$ via the Lie bracket, with the map from $D_x\approx\mathfrak{g}_{-1}$ to $\mathfrak{gl}(D_x)\approx\mathfrak{g}_0$ given by $X_x\mapsto[\eta,X_x]$, where $[\eta,X_x](Y_x):=[[\eta,X],Y]_x$ for vector fields $X$ and $Y$ in $D$; this only depends on the values of $X$ and $Y$ at $x$, since for $f_1,f_2\in C^\infty(M)$,
\begin{align*}[[\eta,f_1X],f_2Y]_x & =[\eta(f_1)X+f_1[\eta,X],f_2Y]_x \\ & =[\eta(f_1)X,f_2Y]_x+[f_1[\eta,X],f_2Y]_x \\ & =f_1(x)f_2(x)[[\eta,X],Y]_x.\end{align*}
This map $\tilde{\eta}$ determined by $\eta\in\mathfrak{s}^1$ shifts the grading up by $1$, sending each $\mathfrak{g}_{-i}$ to $\mathfrak{g}_{-i+1}$, and is naturally a derivation, since it is given by acting by $\eta$ via the Lie bracket, so $\tilde{\eta}\in\mathfrak{g}_1$. The kernel of the map $\eta\mapsto\tilde{\eta}$ is $\mathfrak{s}^2$, so we get an embedding of $\mathrm{gr}_1(\mathfrak{s})=\mathfrak{s}^1/\mathfrak{s}^2$ into $\mathfrak{g}_1$. Doing the analogous thing for each $\eta\in\mathfrak{s}^2$, defining $\tilde{\eta}:\mathfrak{g}_-\to\mathfrak{g}_-+\mathfrak{g}_0+\mathfrak{g}_1$ with $\tilde{\eta}|_{\mathfrak{g}_{-2}}:\mathfrak{g}_{-2}\to\mathfrak{g}_0$ given by $X_x\mapsto[\eta,X_x]$ and $\tilde{\eta}|_{\mathfrak{g}_{-1}}:\mathfrak{g}_{-1}\to\mathfrak{g}_1$ given by $X_x\mapsto(Y_x\mapsto(Z_x\mapsto[[[\eta,X],Y],Z]_x))$, gives an embedding of $\mathrm{gr}_2(\mathfrak{s})$ into $\mathfrak{g}_2$, and similarly, using the Lie bracket to construct a derivation $\tilde{\eta}:\mathfrak{g}_-\to\mathfrak{g}_-+\mathfrak{g}_0+\mathfrak{g}_1+\mathfrak{g}_2$ for each $\eta\in\mathfrak{s}^3$ gives an embedding of $\mathrm{gr}_3(\mathfrak{s})$ into $\mathfrak{g}_3$ as well. In short, we get an embedding of $\mathrm{gr}(\mathfrak{s})$ as a graded subalgebra of $\mathfrak{g}$, as desired.\mbox{\qedhere}\end{proof}

In particular, this makes $\mathfrak{g}$ the maximal possible graded Lie algebra of local infinitesimal symmetries for a $(2,3,5)$-distribution, since all other such graded Lie algebras (with grading compatible with the filtration) must embed into it. Moreover, it has maximal dimension among (not necessarily graded) Lie algebras of local infinitesimal symmetries, since $\mathrm{gr}(\mathfrak{s})$ is isomorphic to $\mathfrak{s}$ as a vector space and embeds into $\mathfrak{g}$.

It is worth noting, however, that $\mathrm{gr}(\mathfrak{s})$ is generally not isomorphic to $\mathfrak{s}$ as a Lie algebra. Take, for example, the Lie algebra $\mathfrak{so}(3)\oplus\mathfrak{so}(3)$ for $D_{r_0/r_1}$ in the rolling spheres examples: since $\mathrm{S}(\Orth(3)\times\Orth(3))$ is acting transitively there, the vector fields from $\mathfrak{so}(3)\oplus\mathfrak{so}(3)$ will span the tangent space at each point, so that $\mathfrak{g}_-$ must be a graded subalgebra of $\mathrm{gr}(\mathfrak{so}(3)\oplus\mathfrak{so}(3))$, and since the stabilizer subalgebra here is isomorphic to $\mathfrak{o}(2)$,
\[\mathrm{gr}(\mathfrak{so}(3)\oplus\mathfrak{so}(3))\approx\mathfrak{g}_-+\mathfrak{o}(2)<\mathfrak{g}_-+\mathfrak{g}_0,\]
with $\mathfrak{o}(2)$ embedded in $\mathfrak{g}_0\approx\mathfrak{gl}_2\mathbb{R}$. But this means that $\mathfrak{g}_-$ is a (solvable) ideal in $\mathrm{gr}(\mathfrak{so}(3)\oplus\mathfrak{so}(3))$, so $\mathrm{gr}(\mathfrak{so}(3)\oplus\mathfrak{so}(3))$ cannot be isomorphic to the semisimple Lie algebra $\mathfrak{so}(3)\oplus\mathfrak{so}(3)$.

\vspace{1em}
\subsection{The structure of the prolonged Lie algebra \texorpdfstring{$\mathfrak{g}$}{\mathfrak{g}}}\label{structurestuff}\hfill\\
In this subsection, we will prove the following result.
\begin{theorem}The Lie algebra $\mathfrak{g}=\sum_{i=-3}^3\mathfrak{g}_i$ constructed as the Tanaka prolongation of $\mathfrak{g}_-+\mathfrak{g}_0$ above is the split real form of the exceptional simple Lie algebra of type $\mathrm{G}_2$.\end{theorem}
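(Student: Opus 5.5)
We need to show $\mathfrak{g}$ is simple and of type $\mathrm{G}_2$, and that it is the split real form. The plan has three steps: establish semisimplicity, then simplicity, then identify the root system and read off the real form.

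\textbf{Semisimplicity.} We find a maximal toral subalgebra and compute the root decomposition over $\mathbb{R}$.

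\begin{itemize}
\item Take $\mathfrak{a}\subseteq\mathfrak{g}_0\approx\mathfrak{gl}_2\mathbb{R}$ to be the diagonal matrices. This subalgebra acts $\mathbb{R}$-diagonalizably on every $\mathfrak{g}_i$, since each $\mathfrak{g}_i$ is a tensor construction on $\mathbb{R}^2$ and its dual.
\item Each $\mathfrak{g}_i$ splits into $\mathfrak{a}$-weight lines. Listing the weights gives twelve nonzero real roots plus the zero weight space $\mathfrak{a}$. These are exactly the nodes of Figure \ref{bracket_rootdiagram}.
\item Since every root space is one-dimensional and the zero weight space is $\mathfrak{a}$ itself, $\mathfrak{a}$ is self-normalizing and abelian, hence a Cartan subalgebra.
\end{itemize}

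To get semisimplicity, we check that the Killing form is nondegenerate. This follows from two facts:
\begin{itemize}
\item $\kappa(\mathfrak{g}_i,\mathfrak{g}_j)=0$ unless $i+j=0$, because the grading element acts diagonally.
\item The pairings $\mathfrak{g}_i\times\mathfrak{g}_{-i}\to\mathbb{R}$ are nondegenerate.
\end{itemize}
For the second fact, the bracket formulas give $[\mathfrak{g}_\alpha,\mathfrak{g}_{-\alpha}]\subseteq\mathfrak{a}$ nonzero for each root $\alpha$. Then $\kappa(X_\alpha,X_{-\alpha})\neq0$, which is computed via $\kappa(X_\alpha,X_{-\alpha})=\alpha$ evaluated against the coroot up to a positive factor. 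This is the step I expect to be fiddliest, because of the many structure constants in Definition \ref{tanakaprolongation}.

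A cleaner alternative avoids computing $\kappa$ explicitly. Suppose $\mathfrak{r}$ is a solvable ideal, graded since it is $\mathfrak{a}$-stable.
\begin{itemize}
\item If $\mathfrak{r}\cap\mathfrak{g}_{-1}=0$, then $\mathfrak{r}$ is nonzero in some degree. Bracketing repeatedly with $\mathfrak{g}_{-1}$ pushes it down to $\mathfrak{g}_-$, and the derivation injectivity built into the prolongation (elements of $\mathfrak{g}_i$, $i\ge0$, are determined by their brackets with $\mathfrak{g}_{-1}$) forces $\mathfrak{r}\cap\mathfrak{g}_{-1}\neq0$ anyway.
\item Now take $0\neq v\in\mathfrak{r}\cap\mathfrak{g}_{-1}$ and bracket with $\mathfrak{g}_1$. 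Using $[\alpha,v]=\alpha(v)\mathds{1}-3\alpha\otimes v$, we get elements of $\mathfrak{g}_0$ not lying in any solvable ideal: they generate $\mathfrak{sl}_2$ copies together with $v$.
\end{itemize}
This contradicts solvability, so the radical vanishes.

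\textbf{Simplicity.} A nonzero ideal $\mathfrak{i}$ is $\mathfrak{a}$-stable, so it contains some root vector $X_\alpha$.
\begin{itemize}
\item From the diagram, the root strings connect every root to every other root under iterated brackets. The $\mathrm{G}_2$ root system is irreducible, and the needed brackets are visibly nonzero from the formulas.
\item Therefore $\mathfrak{i}$ contains all root spaces, and hence $[\mathfrak{g}_\alpha,\mathfrak{g}_{-\alpha}]$, which spans $\mathfrak{a}$.
\end{itemize}
So $\mathfrak{i}=\mathfrak{g}$.

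\textbf{Type and real form.} The twelve roots are read off from their $\mathfrak{a}$-weights.
\begin{itemize}
\item Using $e_1,e_2$ weights $\epsilon_1,\epsilon_2$, the roots are $\pm(\epsilon_1-\epsilon_2)$, $\pm\epsilon_i$ shifted appropriately, and so on.
\item With simple roots given by one short root in $\mathfrak{g}_{-1}$ and the long root $\pm(\epsilon_1-\epsilon_2)$ in $\mathfrak{g}_0$, the Cartan integers are $-1$ and $-3$. This yields the $\mathrm{G}_2$ Cartan matrix.
\item Equivalently, $\dim\mathfrak{g}=14$ and the rank is $2$, and the only $14$-dimensional simple complex Lie algebra of rank $2$ is $\mathfrak{g}_2$. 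One must still check that the complexification stays simple; this follows because the roots are real, so the same argument works over $\mathbb{C}$.
\end{itemize}
Finally, $\mathfrak{a}$ is a Cartan subalgebra on which all roots are real-valued, so $\mathfrak{g}$ is a split real form. Split real forms are unique up to isomorphism, so $\mathfrak{g}$ is the split real form of type $\mathrm{G}_2$.
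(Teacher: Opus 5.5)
Your argument is correct. It follows the same skeleton as the paper: nondegeneracy of the Killing form through the duality of $\mathfrak g_i$ with $\mathfrak g_{-i}$, the root decomposition with respect to the diagonal subalgebra $\mathfrak a\subseteq\mathfrak g_0$, and recognition of the $\mathrm{G}_2$ diagram. Where you differ is in how you certify that $\mathfrak g$ is split.

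The paper computes the Killing form constants explicitly, writes down the Cartan involution $\theta$, and checks $\kappa(X,\theta(X))<0$. It then deduces from $\mathfrak a\subseteq\mathfrak k^\perp$ being self-centralizing that $\mathfrak a$ is a maximally noncompact Cartan subalgebra of real rank $2$.

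You go straight to the definition: $\mathfrak a$ is a Cartan subalgebra acting by real weights, so $\mathfrak g$ is split. Your dimension count then does more work than you credit it with. Semisimple of dimension $14$ and rank $2$ already forces $\mathrm{G}_2$, since $\mathrm{A}_1\oplus\mathrm{A}_1$, $\mathrm{A}_2$ and $\mathrm{B}_2$ have dimensions $6$, $8$ and $10$. So your separate simplicity step and the check on the complexification are redundant.

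For this theorem alone your route is leaner: it needs neither the explicit constants nor a verification that $\theta$ is an automorphism. The paper pays that cost because $\theta$ is reused immediately afterward, to define the maximal compact subgroup $K$ and the subspace $\{v+\theta(v):v\in\mathfrak g_{-1}\}$.

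Two steps should be tightened.

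First, the formula phrased via coroots presupposes the semisimple structure you are trying to establish. Argue by invariance instead. Set $H_\alpha:=[X_\alpha,X_{-\alpha}]$, which is nonzero by the bracket formulas. For $h\in\mathfrak a$ one has $\kappa(h,H_\alpha)=\alpha(h)\,\kappa(X_\alpha,X_{-\alpha})$. On $\mathfrak a$, $\kappa(h,h')=\sum_\beta\beta(h)\beta(h')$ is positive definite because the real roots span $\mathfrak a^\vee$. Taking $h=H_\alpha$ then gives $\kappa(X_\alpha,X_{-\alpha})\neq0$. The same positivity also supplies nondegeneracy of $\kappa|_{\mathfrak a}$, which your bullets omit.

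Second, the radical alternative has a hole and an unproved claim.
\begin{itemize}
\item Bracketing with $\mathfrak g_{-1}$ never moves an ideal concentrated in $\mathfrak g_{-2}+\mathfrak g_{-3}$ into $\mathfrak g_{-1}$, and derivation injectivity only concerns degrees $\geq 0$. You must bracket upward with $\mathfrak g_1$, using $[\alpha,v\wedge w\otimes u]=-\alpha(u)v\wedge w$ and $[\alpha,v\wedge w]=2\iota_\alpha(v\wedge w)$.
\item The assertion that the resulting elements of $\mathfrak g_0$ lie in no solvable ideal is exactly what must be shown. For instance, $e_1\in\mathfrak r$ gives $[e_2^\vee,e_1]=-3\begin{smallbmatrix}0&1\\0&0\end{smallbmatrix}\in\mathfrak r$. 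Bracketing twice with $\begin{smallbmatrix}0&0\\1&0\end{smallbmatrix}$ then puts all of $\mathfrak{sl}_2\mathbb R$ inside $\mathfrak r$, contradicting solvability.
\end{itemize}
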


From the description of the bracket given in Definition \ref{tanakaprolongation} above, we can compute that the Killing form $\kgf$ of the Lie algebra $\mathfrak{g}$ satisfies, for $v,w,u\in\mathfrak{g}_{-1}\approx\mathbb{R}^2$, $\alpha,\alpha',\beta\in\mathfrak{g}_1\approx(\mathbb{R}^2)^\vee$, and $R,R'\in\mathfrak{g}_0\approx\mathfrak{gl}_2\mathbb{R}$,
\begin{gather*}\kgf(\alpha\wedge\alpha'\otimes\beta,v\wedge w\otimes u)=8\alpha\wedge\alpha'(v\wedge w)\beta(u), \\ \kgf(\alpha\wedge\alpha',v\wedge w)=24\alpha\wedge\alpha'(v\wedge w), \\ \kgf(\alpha,v)=24\alpha(v),\text{ and} \\ \kgf(R,R')=8(\mathrm{tr}(RR')+\mathrm{tr}(R)\mathrm{tr}(R'));\end{gather*}
note that, since $\mathfrak{g}=\sum_{i=-3}^3\mathfrak{g}_i$ is a graded Lie algebra, if $X\in\mathfrak{g}_i$ and $Y\in\mathfrak{g}_j$, then $\ad_X\circ\ad_Y(\mathfrak{g}_\ell)\subseteq\mathfrak{g}_{i+j+\ell}$, so $\kgf(X,Y)=0$ unless $i+j=0$. Because the computation shows that each grading component $\mathfrak{g}_i$ is dual to $\mathfrak{g}_{-i}$ with respect to the Killing form, $\kgf$ is nondegenerate on $\mathfrak{g}$, hence $\mathfrak{g}$ is semisimple.

\newpage
Now, consider the involutive Lie algebra automorphism $\theta\in\Aut(\mathfrak{g})$ determined by, for $\{e_1,e_2\}$ our basis for $\mathfrak{g}_{-1}\approx\mathbb{R}^2$ with dual basis $\{e_1^\vee,e_2^\vee\}$ for $\mathfrak{g}_1\approx(\mathbb{R}^2)^\vee$ and $R\in\mathfrak{g}_0\approx\mathfrak{gl}_2\mathbb{R}$,
\begin{gather*}\theta(e_j)=-e_j^\vee, \\ \theta(e_1\wedge e_2)=-e_1^\vee\wedge e_2^\vee, \\ \theta(e_1\wedge e_2\otimes e_j)=-e_1^\vee\wedge e_2^\vee\otimes e_j^\vee, \\ \text{and }\theta(R)=-R^\top,\end{gather*}
where $R^\top$ denotes the matrix transpose of $R$. We can see that $\theta$ satisfies $\theta(\mathfrak{g}_i)=\mathfrak{g}_{-i}$ for each $i$, and moreover, that
\begin{gather*}\kgf(v_1e_1+v_2e_2,\theta(v_1e_1+v_2e_2))=24(-v_1^2-v_2^2)=-24(v_1^2+v_2^2), \\ \kgf(e_1\wedge e_2,\theta(e_1\wedge e_2))=-24, \\ \kgf(e_1\wedge e_2\otimes(u_1e_1+u_2e_2),\theta(e_1\wedge e_2\otimes(u_1e_1+u_2e_2)))=-8(u_1^2+u_2^2),\end{gather*}
and
\begin{align*}\kgf(R,\theta(R)) & =8(\mathrm{tr}(R(-R^\top))+\mathrm{tr}(R)\mathrm{tr}(-R^\top)) \\ & =-8(\mathrm{tr}(R^\top R)+\mathrm{tr}(R)^2).\end{align*}
In each of these cases, we see that $\kgf(X,\theta(X))<0$ unless $X=0$, so $\theta$ is, in fact, a Cartan involution for $\mathfrak{g}$.

Knowing $\theta$, we then get a corresponding Cartan decomposition of $\mathfrak{g}$ as the sum of the Lie algebra $\mathfrak{k}:=\{X\in\mathfrak{g}:\theta(X)=X\}$ and its $\kgf$-perpendicular subspace
\[\mathfrak{k}^\perp:=\{X\in\mathfrak{g}:\kgf(X,\mathfrak{k})=\{0\}\}=\{X\in\mathfrak{g}:\theta(X)=-X\}.\]
The subalgebra $\mathfrak{a}=\langle\begin{smallbmatrix}1 & 0 \\ 0 & 0\end{smallbmatrix},\begin{smallbmatrix}0 & 0 \\ 0 & 1\end{smallbmatrix}\rangle$ of diagonal matrices in $\mathfrak{g}_0\approx\mathfrak{gl}_2\mathbb{R}$ is contained in $\mathfrak{k}^\perp$ and equal to its own centralizer in $\mathfrak{g}$, so (following the terminology from \cite{CapSlovakPG1}) $\mathfrak{a}$ is a $\theta$-stable maximally noncompact Cartan subalgebra of $\mathfrak{g}$. As such, we are dealing with a split real semisimple Lie algebra $\mathfrak{g}$ of real rank $\dim(\mathfrak{a})=2$.

To identify which split semisimple Lie algebra $\mathfrak{g}$ is, the natural next step is to construct its (restricted) root system. For this, we find it convenient to identify the subalgebra $\mathfrak{g}_{-3}+\mathfrak{g}_0+\mathfrak{g}_3$ with $\mathfrak{sl}_3\mathbb{R}$ via
\[e_1\wedge e_2\otimes u+R+e_1^\vee\wedge e_2^\vee\otimes\beta\mapsto\begin{smallbmatrix}-\mathrm{tr}(R) & \beta \\ u & R\end{smallbmatrix};\]
under this isomorphism, $\mathfrak{a}$ identifies with the Cartan subalgebra of $\mathfrak{sl}_3\mathbb{R}$ given by trace-free diagonal matrices, so every (restricted) root of $\mathfrak{g}$ corresponds to a weight of $\mathfrak{sl}_3\mathbb{R}$. Let us define $\varepsilon_1(\begin{smallbmatrix}s & 0 \\ 0 & t\end{smallbmatrix}):=-s-t$, $\varepsilon_2(\begin{smallbmatrix}s & 0 \\ 0 & t\end{smallbmatrix}):=s$, and $\varepsilon_3(\begin{smallbmatrix}s & 0 \\ 0 & t\end{smallbmatrix}):=t$, so that under the identification of $\mathfrak{g}_{-3}+\mathfrak{g}_0+\mathfrak{g}_3$ with $\mathfrak{sl}_3\mathbb{R}$, we get
\[\varepsilon_i\left(\begin{smallbmatrix}\lambda_1 & 0 & 0 \\ 0 & \lambda_2 & 0 \\ 0 & 0 & \lambda_3\end{smallbmatrix}\right)=\lambda_i.\]
Then, writing $\mathfrak{g}_\gamma$ for the root space with root $\gamma\in\mathfrak{a}^\vee$, the root spaces of $\mathfrak{g}$ are as follows:
\begin{gather*}\mathfrak{g}_{\varepsilon_2-\varepsilon_1}=\langle e_1\wedge e_2\otimes e_1\rangle, \mathfrak{g}_{\varepsilon_3-\varepsilon_1}=\langle e_1\wedge e_2\otimes e_2\rangle, \\ \mathfrak{g}_{-\varepsilon_1}=\langle e_1\wedge e_2\rangle, \\ \mathfrak{g}_{\varepsilon_2}=\langle e_1\rangle, \mathfrak{g}_{\varepsilon_3}=\langle e_2\rangle, \\ \mathfrak{g}_{\varepsilon_3-\varepsilon_2}=\langle\begin{smallbmatrix}0 & 0 \\ 1 & 0\end{smallbmatrix}\rangle, \mathfrak{g}_{\varepsilon_2-\varepsilon_3}=\langle\begin{smallbmatrix}0 & 1 \\ 0 & 0\end{smallbmatrix}\rangle, \\ \mathfrak{g}_{-\varepsilon_2}=\langle e_1^\vee\rangle, \mathfrak{g}_{-\varepsilon_3}=\langle e_2^\vee\rangle, \\ \mathfrak{g}_{\varepsilon_1}=\langle e_1^\vee\wedge e_2^\vee\rangle, \\ \mathfrak{g}_{\varepsilon_1-\varepsilon_2}=\langle e_1^\vee\wedge e_2^\vee\otimes e_1^\vee\rangle,\text{ and }\mathfrak{g}_{\varepsilon_1-\varepsilon_3}=\langle e_1^\vee\wedge e_2^\vee\otimes e_2^\vee\rangle.\end{gather*}
Each of these restricted roots precisely corresponds to one of the nodes of the bracket diagram from Figure \ref{bracket_rootdiagram}, and indeed, that is because the bracket diagram we constructed is essentially just the root diagram in disguise. We show the relabeled root diagram in Figure \ref{relabeledrootdiagram}. This is, of course, just the root diagram of type $\mathrm{G}_2$, so $\mathfrak{g}$ is the split real form of the exceptional simple Lie algebra of type $\mathrm{G}_2$.

\begin{figure}[h]
\centering\includegraphics[width=0.55\textwidth]{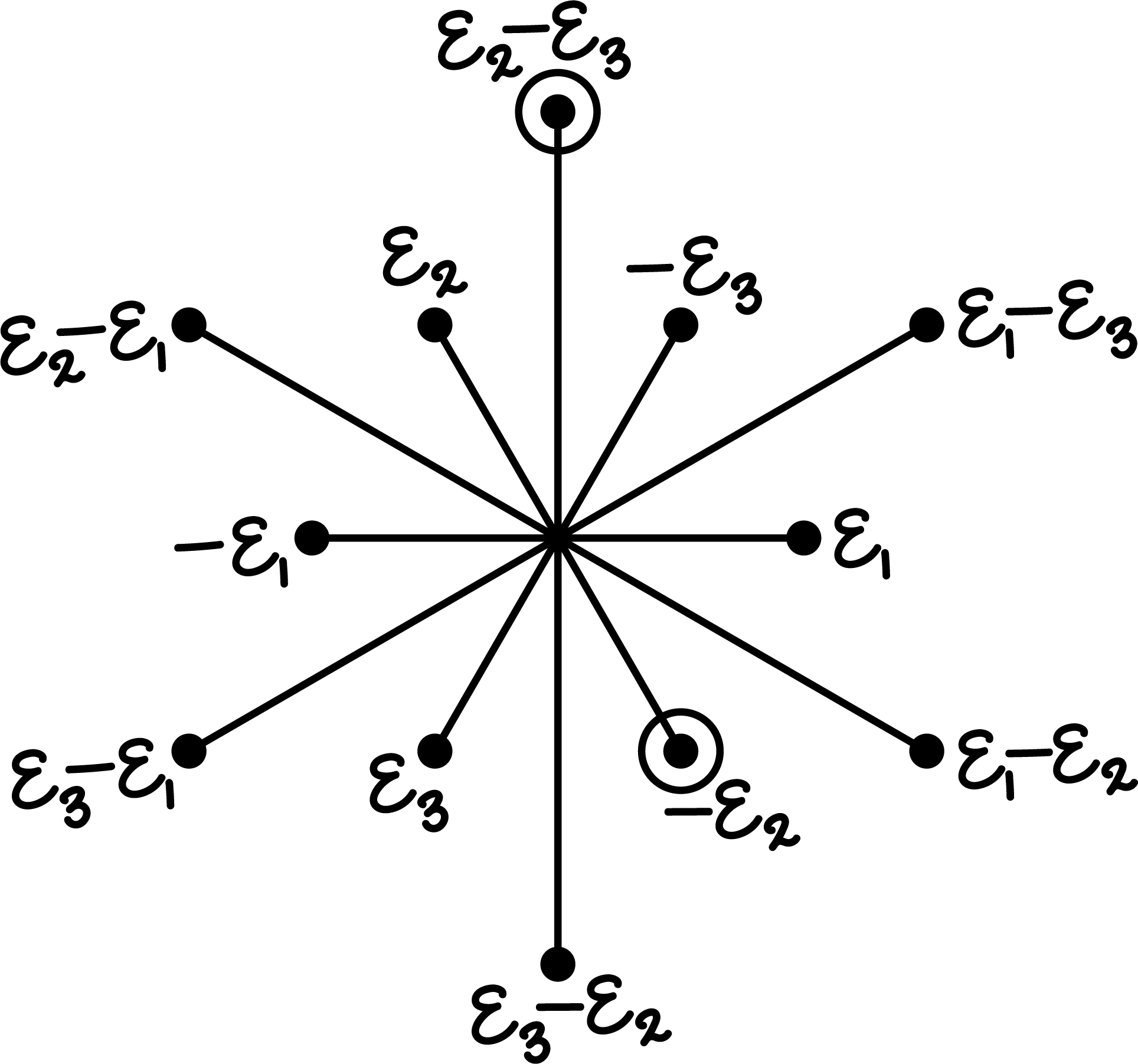}
\caption{The root diagram for $\mathfrak{g}$, with the roots $-\varepsilon_2$ and $\varepsilon_2-\varepsilon_3$ circled as simple roots}
\label{relabeledrootdiagram}
\end{figure}

In the sequel, we will make convenient use of the following result on the Lie group $\Aut(\mathfrak{g})$ of Lie algebra automorphisms of $\mathfrak{g}$.

\begin{proposition}\label{innerauto} Every automorphism of $\mathfrak{g}$ is an inner automorphism, meaning that $\exp(\ad_\mathfrak{g})$ generates $\Aut(\mathfrak{g})$.\end{proposition}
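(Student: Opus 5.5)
Let $\phi\in\Aut(\mathfrak{g})$ be arbitrary; I will show $\phi$ lies in the subgroup $\mathrm{Int}(\mathfrak{g})$ generated by $\exp(\ad_\mathfrak{g})$ by composing $\phi$ with inner automorphisms until only the identity is left. The standard structure theory of real semisimple Lie algebras (see \cite{CapSlovakPG1}) says that all Cartan involutions are conjugate under $\mathrm{Int}(\mathfrak{g})$, and, given a fixed Cartan involution, all maximally noncompact $\theta$-stable Cartan subalgebras are conjugate under inner automorphisms commuting with $\theta$. So there is $\psi_1\in\mathrm{Int}(\mathfrak{g})$ with $\psi_1\phi$ commuting with $\theta$ and preserving $\mathfrak{a}$. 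Then $\psi_1\phi$ permutes the restricted roots, and so acts on $\mathfrak{a}^\vee$ by an automorphism of the $\mathrm{G}_2$ root system in Figure~\ref{relabeledrootdiagram}.

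The root system $\mathrm{G}_2$ has no nontrivial diagram automorphisms: its Dynkin diagram has one long and one short simple root. Hence its automorphism group is exactly the Weyl group $W$. The Weyl group is realized by inner automorphisms, namely elements of $\exp(\ad_{\mathfrak{k}})$ normalizing $\mathfrak{a}$. Concretely, the reflection in a root $\gamma$ is induced by $\exp\bigl(\tfrac{\pi}{2}\ad_{X_\gamma-\theta X_\gamma}\bigr)$ for a suitably normalized $X_\gamma\in\mathfrak{g}_\gamma$. Composing with such an element, I get $\psi_2\in\mathrm{Int}(\mathfrak{g})$ with $\psi_2\phi|_\mathfrak{a}=\mathrm{id}$. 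Then $\psi_2\phi$ preserves every root space, including $\mathfrak{g}_{-\varepsilon_2}=\langle e_1^\vee\rangle$ and $\mathfrak{g}_{\varepsilon_2-\varepsilon_3}$. Because these root spaces are one-dimensional, $\psi_2\phi$ scales the two simple root vectors by nonzero reals $\lambda_1,\lambda_2$, and scales the negative simple root vectors by $\lambda_1^{-1},\lambda_2^{-1}$. Since $\mathfrak{g}$ is generated by the simple root vectors and their negatives, these two scalars determine $\psi_2\phi$.

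It remains to realize every pair $(\lambda_1,\lambda_2)\in(\mathbb{R}^\times)^2$ by inner automorphisms, and this is the step I expect to be the real obstacle. Elements $\exp(\ad_H)$ with $H\in\mathfrak{a}$ only produce positive scalars $e^{\gamma(H)}$. For this I will use the explicit identification of $\mathfrak{g}_{-3}+\mathfrak{g}_0+\mathfrak{g}_3$ with $\mathfrak{sl}_3\mathbb{R}$. The diagonal sign matrices in $\SLin_3\mathbb{R}$, such as $\mathrm{diag}(1,-1,-1)$ and $\mathrm{diag}(-1,1,-1)$, lie in the connected group $\SOrth(3)$, so they are products of exponentials of elements of $\mathfrak{so}(3)\subset\mathfrak{sl}_3\mathbb{R}\subset\mathfrak{g}$. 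Their adjoint action on all of $\mathfrak{g}$ is therefore inner. Each acts on $\mathfrak{a}$ trivially and on a root space $\mathfrak{g}_\gamma$ by the sign $\pm1$ read off from the character, since every root is a weight of $\mathfrak{sl}_3\mathbb{R}$.

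I then check that these characters realize all four sign patterns on the simple roots $-\varepsilon_2$ and $\varepsilon_2-\varepsilon_3$. For example, $\mathrm{diag}(1,-1,-1)$ gives $(-1,+1)$ and $\mathrm{diag}(-1,-1,1)$ gives $(-1,-1)$. Combining such an element with $\exp(\ad_H)$ for $H\in\mathfrak{a}$ to adjust the positive parts yields $\psi_3\in\mathrm{Int}(\mathfrak{g})$ with $\psi_3\psi_2\psi_1\phi=\mathrm{id}$. This shows $\phi$ is inner.
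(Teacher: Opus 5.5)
Your proof is correct. Its first two steps are the paper's: you move $\phi(\mathfrak{a})$ back to $\mathfrak{a}$ by an inner automorphism, note that the induced symmetry of the $\mathrm{G}_2$ root system lies in the Weyl group because a short and a long simple root cannot be swapped, and undo it inside $\mathrm{N}_K(\mathfrak{a})$. The paper then concludes at once that the resulting automorphism is $\mathrm{id}_\mathfrak{g}$. That does not follow: an automorphism acting trivially on $\mathfrak{a}$ can still rescale root spaces, as $\exp(\ad_H)$ with $0\neq H\in\mathfrak{a}$ shows. Moreover, the same argument applied verbatim to $\mathfrak{sl}_2\mathbb{R}$ would show that $\Ad_{\mathrm{diag}(1,-1)}$ is inner, which is false. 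That automorphism fixes $\mathfrak{a}$ and negates both root spaces. Your third paragraph supplies exactly the missing step. The residual automorphism is determined by $(\lambda_1,\lambda_2)\in(\mathbb{R}^\times)^2$, $\exp(\ad_\mathfrak{a})$ realizes the absolute values, and sign matrices in $\SOrth(3)\subset\SLin_3\mathbb{R}$ realize all four sign patterns. This is the one place where the specific structure of $\mathrm{G}_2$ enters, so your version is the complete one.

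Two small repairs are needed. First, with the paper's convention $\mathfrak{k}=\{X:\theta(X)=X\}$, the element $X_\gamma-\theta X_\gamma$ lies in $\mathfrak{k}^\perp$, and $\exp(t\,\ad_{X_\gamma-\theta X_\gamma})$ never normalizes $\mathfrak{a}$ for $t\neq 0$. The reflection is induced by $\exp\bigl(\tfrac{\pi}{2}\ad_{X_\gamma+\theta X_\gamma}\bigr)$; compare $\exp(\tfrac{\pi}{2}(E-F))$ with $\exp(\tfrac{\pi}{2}(E+F))$ in $\SLin_2\mathbb{R}$.

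Second, speaking of the action of a sign matrix on all of $\mathfrak{g}$ presupposes that $\exp(\ad_X)$ depends only on $\exp(X)\in\SLin_3\mathbb{R}$. Since $\SLin_3\mathbb{R}$ is not simply connected, this needs a sentence. It holds because, as an $\mathfrak{sl}_3\mathbb{R}$-module, $\mathfrak{g}=\mathfrak{sl}_3\mathbb{R}\oplus(\mathfrak{g}_{-1}+\mathfrak{g}_2)\oplus(\mathfrak{g}_{-2}+\mathfrak{g}_1)$. The last two summands have weights $\{\varepsilon_i\}$ and $\{-\varepsilon_i\}$, so $\mathfrak{g}$ is adjoint plus standard plus dual, and all three integrate to $\SLin_3\mathbb{R}$.

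Alternatively, you can avoid the issue entirely. The element $\exp(\pi\rotoiso)=-\mathds{1}\in G_0$ acts through the $\GLin_2\mathbb{R}$-representation and gives the pattern $(-1,+1)$ on $(-\varepsilon_2,\varepsilon_2-\varepsilon_3)$. The element $\exp(\pi(e_1+\theta(e_1)))$ from Lemma~\ref{2intersections} gives $(+1,-1)$.
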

\begin{proof}Suppose that $\phi\in\Aut(\mathfrak{g})$ and consider the image $\phi(\mathfrak{a})$ of $\mathfrak{a}$ under $\phi$. Because $\phi(\mathfrak{a})$ is another maximally noncompact Cartan subalgebra and all such Cartan subalgebras are conjugate, there must be some inner automorphism, which we write as $\Ad_g$, such that $\Ad_g(\phi(\mathfrak{a}))=\mathfrak{a}$. Therefore, for $Z\in\mathfrak{a}$ and a root vector $\eta_\nu\in\mathfrak{g}_\nu$, we have
\begin{align*}[Z,(\Ad_g\!\circ\,\phi)(\eta_\nu)] & =(\Ad_g\!\circ\,\phi)\left([\phi^{-1}(\Ad_g^{-1}(Z)),\eta_\nu]\right) \\ & =\nu\left(\Ad_g^{-1}\circ\,\phi^{-1}(Z)\right)(\Ad_g\circ\,\phi)(\eta_\nu).\end{align*}
This means that the action of $\Ad_g\!\circ\phi$ on $\mathfrak{a}^\vee$ must permute the roots, and in particular, it takes each (unordered) choice of simple roots to another (unordered) choice of simple roots. The inner automorphisms already act transitively on the sets of simple roots via the action of the Weyl group, so by changing our choice of $\Ad_g$, we may assume that $\Ad_g\circ\phi$ preserves a set of simple roots. Such an automorphism can only be nontrivial if it nontrivially permutes the simple roots. However, there are only two simple roots---one short and one long---and no isometry can swap the short and long roots, so $\Ad_g\circ\phi=\mathrm{id}_\mathfrak{g}$. Thus, every element of $\Aut(\mathfrak{g})$ is an inner automorphism.\mbox{\qedhere}\end{proof}

\subsection{The model geometry for \texorpdfstring{$(2,3,5)$}{(2,3,5)}-distributions}\hfill \\
By construction, $\mathfrak{g}$ is the Lie algebra of local infinitesimal symmetries for the natural $(2,3,5)$-distribution in the model geometry $(G_-G_0,G_0)$. However, integrating these local infinitesimal symmetries on the model space $G_-\cong G_-G_0/G_0$ only gives us local symmetries; except for those coming from $\mathfrak{g}_-+\mathfrak{g}_0$, they do not integrate to 1-parameter subgroups of global symmetries for the distribution. We would like to expand $G_-G_0$ to a larger Lie group that can include all of these local symmetries from $\mathfrak{g}$ as global symmetries for some other space.

\begin{definition}For $\mathfrak{g}$ the split real form of the Lie algebra of type $\mathrm{G}_2$ described above, we define $G=\Zhe_2$ to be the adjoint simple Lie group with Lie algebra $\mathfrak{g}$, meaning the unique centerless connected Lie group with this Lie algebra.\end{definition}

By Proposition \ref{innerauto} in the previous section, the adjoint representation $\Ad:\Zhe_2\to\Aut(\mathfrak{g})$ is a Lie group isomorphism in this case.

Of course, $\Zhe_2$ is our desired candidate for the Lie group integrating all of the local symmetries from $\mathfrak{g}$ to global ones, and as expected, $G_-G_0$ naturally embeds into $G=\Zhe_2$. Indeed, note that the adjoint action of $\mathfrak{g}_-$ on $\mathfrak{g}$ extends its adjoint action on $\mathfrak{g}_-+\mathfrak{g}_0$, so because the adjoint action of $G_-=\exp(\mathfrak{g}_-)<G_-G_0$ on $\mathfrak{g}_-+\mathfrak{g}_0$ is faithful, we may identify the simply connected nilpotent Lie group $G_-$ with the image of $\mathfrak{g}_-<\mathfrak{g}$ under the exponential map in $\Zhe_2\simeq\Aut(\mathfrak{g})$. Likewise, since we constructed $\mathfrak{g}$ as a graded representation of $G_0\simeq\GLin_2\mathbb{R}$, we already have a natural identification of $G_0$ with a subgroup of $\Aut(\mathfrak{g})$ via this action.

\begin{proposition}Every element of $\Aut_\mathrm{gr}(\mathfrak{g}_-)$ canonically extends to a unique element of $\Aut_\mathrm{gr}(\mathfrak{g}):=\{\phi\in\Aut(\mathfrak{g}):\phi(\mathfrak{g}_i)=\mathfrak{g}_i\text{ for each }i\}$. As such, we may identify $G_0:=\Aut_\mathrm{gr}(\mathfrak{g}_-)\simeq\GLin_2\mathbb{R}$ with the preimage of $\Aut_\mathrm{gr}(\mathfrak{g})$ under the isomorphism $\Ad:\Zhe_2\to\Aut(\mathfrak{g})$.\end{proposition}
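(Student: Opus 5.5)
The plan is to prove the statement in three parts: existence of the extension, uniqueness of the extension, and then transport through the isomorphism $\Ad:\Zhe_2\to\Aut(\mathfrak{g})$ from Proposition \ref{innerauto}.

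\emph{Existence.} Identify $\phi\in\Aut_\mathrm{gr}(\mathfrak{g}_-)$ with $A:=\phi|_{\mathfrak{g}_{-1}}\in\GLin_2\mathbb{R}$, as in the earlier proposition on the symbol algebra. Every grading component in Definition \ref{tanakaprolongation} is a $\GLin_2\mathbb{R}$-representation:
\begin{itemize}
\item $\mathfrak{g}_{-1}\approx\mathbb{R}^2$, $\mathfrak{g}_1\approx(\mathbb{R}^2)^\vee$, and the wedge and tensor powers built from them;
\item $\mathfrak{g}_0\approx\mathfrak{gl}_2\mathbb{R}$, acted on by conjugation.
\end{itemize}
I would define $\hat\phi$ to be $A$ acting on each $\mathfrak{g}_i$ through these representations. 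On $\mathfrak{g}_-$ this agrees with $\phi$, by the earlier proposition. It remains to show $\hat\phi$ preserves the bracket, i.e.\ that every bracket formula in Definition \ref{tanakaprolongation} is $\GLin_2\mathbb{R}$-equivariant.
\begin{itemize}
\item Every formula is built from natural operations: $\wedge$, $\otimes$, evaluation $\alpha(v)$, the pairing $\alpha\wedge\alpha'(v\wedge w)$, the contractions $\iota_v$ and $\iota_\alpha$, the identity $\mathds{1}$, and trace. Each of these commutes with $A$ when $A$ acts on dual vectors by $\alpha\mapsto\alpha\circ A^{-1}$.
\item The $\mathfrak{g}_0$-brackets are the differentiated actions, and conjugation by $A$ intertwines them.
\item The key case $[\alpha,v]=\alpha(v)\mathds{1}-3\alpha\otimes v$ was already checked in the computation identifying $\mathfrak{g}_1\approx(\mathbb{R}^2)^\vee$; the other mixed brackets follow by the same one-line manipulation.
\end{itemize}
This verification is the main (though routine) obstacle. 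If a shortcut is preferred, I would note that $\mathfrak{g}_i$ for $i>0$ was defined intrinsically as a space of graded derivations. Then $\hat\phi$ on $\mathfrak{g}_i$ is just $\zeta\mapsto\hat\phi\circ\zeta\circ\phi^{-1}$, which visibly preserves the brackets defined via derivations. This gives equivariance inductively in $i$.

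\emph{Uniqueness.} Let $\psi\in\Aut_\mathrm{gr}(\mathfrak{g})$ with $\psi|_{\mathfrak{g}_-}=\mathrm{id}$. I would show $\psi=\mathrm{id}$ by induction on $i\geq 0$, using two facts:
\begin{itemize}
\item for $X\in\mathfrak{g}_i$ with $i\geq 0$, the element $X$ is determined by $\ad_X|_{\mathfrak{g}_{-1}}$ (this is exactly how the $\mathfrak{g}_i$ were constructed in the Tanaka prolongation, since $\mathfrak{g}_-$ is generated by $\mathfrak{g}_{-1}$);
\item for $Y\in\mathfrak{g}_{-1}$, we have $[\psi(X),Y]=\psi([X,Y])$, because $\psi(Y)=Y$.
\end{itemize}
Since $[X,Y]\in\mathfrak{g}_{i-1}$, it is fixed by $\psi$: for $i=0$ because $\psi$ is the identity on $\mathfrak{g}_-$, and for $i>0$ by the induction hypothesis. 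Hence $\ad_{\psi(X)-X}|_{\mathfrak{g}_{-1}}=0$, so $\psi(X)=X$. Applying this to $\hat\phi^{-1}\circ\hat\phi'$ for two extensions $\hat\phi,\hat\phi'$ of the same $\phi$ gives uniqueness.

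\emph{Conclusion.} Restriction $\Aut_\mathrm{gr}(\mathfrak{g})\to\Aut_\mathrm{gr}(\mathfrak{g}_-)$ is a group homomorphism. It is surjective by existence and injective by uniqueness, so it is an isomorphism. Since $\Ad:\Zhe_2\to\Aut(\mathfrak{g})$ is an isomorphism, the preimage of $\Aut_\mathrm{gr}(\mathfrak{g})$ in $\Zhe_2$ is therefore identified with $G_0=\Aut_\mathrm{gr}(\mathfrak{g}_-)\simeq\GLin_2\mathbb{R}$. This identification is compatible with the embedding of $G_0$ into $\Aut(\mathfrak{g})$ via its graded action described just before the statement.
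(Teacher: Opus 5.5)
Your proposal is correct and follows the paper's proof essentially step for step. Existence comes from the $\GLin_2\mathbb{R}$-equivariance of the bracket formulas in Definition \ref{tanakaprolongation}, and uniqueness comes from induction on the grading, using that each element of $\mathfrak{g}_i$ ($i\geq 0$) is determined by its bracket with $\mathfrak{g}_{-1}$. Your added remark that restriction $\Aut_\mathrm{gr}(\mathfrak{g})\to\Aut_\mathrm{gr}(\mathfrak{g}_-)$ is therefore a bijective homomorphism usefully spells out the identification of $G_0$ with $\Ad^{-1}(\Aut_\mathrm{gr}(\mathfrak{g}))$, which the paper leaves implicit.
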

\begin{proof}By construction, $\mathfrak{g}=\mathfrak{g}_{-3}+\mathfrak{g}_{-2}+\mathfrak{g}_{-1}+\mathfrak{g}_0+\mathfrak{g}_1+\mathfrak{g}_2+\mathfrak{g}_3$ is a graded representation of $G_0\simeq\GLin_2\mathbb{R}$, with $G_0$ acting on each grading component. Each of the bracket operations included in Definition \ref{tanakaprolongation} is naturally $G_0$-equivariant: for example, for $A\in G_0\simeq\GLin_2\mathbb{R}$, we have
\begin{align*}[A\cdot\alpha,A\cdot v] & =[\alpha\circ A^{-1},Av]=(\alpha\circ A^{-1})(Av)\mathds{1}-3(\alpha\circ A^{-1})\otimes Av \\ & =A(\alpha(v)\mathds{1}-3\alpha\otimes v)A^{-1}=A\cdot[\alpha,v]\end{align*}
for all $\alpha\in\mathfrak{g}_1\approx(\mathbb{R}^2)^\vee$ and $v\in\mathfrak{g}_{-1}\approx\mathbb{R}^2$. Therefore, this faithful action of $G_0=\Aut_\text{gr}(\mathfrak{g}_-)$, which extends the action of $G_0$ on $\mathfrak{g}_-$ to make $\mathfrak{g}_-$ a $G_0$-subrepresentation of $\mathfrak{g}$, determines an embedding of $G_0$ into $\Aut_\text{gr}(\mathfrak{g})$.

To show that this extension of elements from $\Aut_\text{gr}(\mathfrak{g}_-)$ to $\Aut_\text{gr}(\mathfrak{g})$ is unique, suppose that $\phi\in\Aut_\text{gr}(\mathfrak{g})$ restricts to the identity on $\mathfrak{g}_-$. For $R\in\mathfrak{g}_0$, we must have
\[\phi(R)v=[\phi(R),v]=[\phi(R),\phi(v)]=\phi([R,v])=\phi(Rv)=Rv\]
for all $v\in\mathfrak{g}_{-1}$, so $\phi$ must also restrict to the identity on $\mathfrak{g}_0$. Iteratively, since the elements $\eta$ of each nonnegative grading component $\mathfrak{g}_i$ are---by construction---uniquely determined by the restriction
\[\ad_\eta|_{\mathfrak{g}_{-1}}:\mathfrak{g}_{-1}\to\mathfrak{g}_{i-1}\]
of their adjoint action to $\mathfrak{g}_{-1}$, we can see that if $\phi|_{\mathfrak{g}_{i-1}}$ is the identity, then
\[[\phi(\eta),v]=[\phi(\eta),\phi(v)]=\phi([\eta,v])=[\eta,v]\]
for each $v\in\mathfrak{g}_{-1}$, so $\phi|_{\mathfrak{g}_i}$ is the identity too. Thus, $\phi$ restricts to the identity on each grading component, so $\phi=\mathrm{id}_\mathfrak{g}$.\mbox{\qedhere}\end{proof}

The elements of $\mathfrak{g}$ that generate local symmetries fixing the identity element of $G_-\cong G_-G_0/G_0$ are the local infinitesimal symmetries that vanish there. These elements are precisely the sums of elements from the nonnegative grading components of $\mathfrak{g}$, which form a subalgebra
\[\mathfrak{p}:=\mathfrak{g}_0+\mathfrak{g}_1+\mathfrak{g}_2+\mathfrak{g}_3.\]
Since we want $\Zhe_2$ to extend the action of $G_-G_0$ in a way that makes the 1-parameter subgroups from $\mathfrak{g}$ into global symmetries, this subalgebra $\mathfrak{p}$ should be the Lie algebra of the isotropy subgroup $P$ for a point of our new homogeneous space. Knowing that $G_0$ already determines global symmetries in $G_-G_0$ that fix the identity element in $G_-$, the subgroup $G_0<\Zhe_2$ should be contained in $P$. Moreover, the nilpotent subalgebra
\[\mathfrak{p}_+:=\mathfrak{g}_1+\mathfrak{g}_2+\mathfrak{g}_3=\theta(\mathfrak{g}_-)\]
determines a simply connected nilpotent Lie subgroup
\[P_+:=\exp(\mathfrak{p}_+)=\exp(\theta(\mathfrak{g}_-))\]
conjugate to $G_-=\exp(\mathfrak{g}_-)$ in $\Zhe_2$, and we need $P_+$ to be a subgroup of $P$ because $\mathfrak{p}_+<\mathfrak{p}$. Putting these together generates our new isotropy subgroup.

\begin{definition}We will refer to $(\Zhe_2,P)$ as the \emph{model geometry for $(2,3,5)$-distributions}, where $P:=G_0P_+<\Zhe_2$ is the closed subgroup generated by $G_0$ and $P_+:=\exp(\mathfrak{p}_+)$.\end{definition}

As our choices of notation above have hopefully foreshadowed, this subgroup $P$ is a parabolic subgroup of $\Zhe_2$, since $\mathfrak{p}^\perp=\mathfrak{p}_+$ is a nilpotent subalgebra of $\mathfrak{g}$, and $G_0$ is a Levi subgroup of $P$ in the usual sense. Pictorially, this gives us the kind of description of $(\Zhe_2,P)$ that we get for all parabolic geometries in terms of open Schubert cells: each $g\in\Zhe_2$ determines an open Schubert cell $\quot{P}\!(gG_-G_0)$ on $\Zhe_2/P$ imbued with the geometric structure of $(G_-G_0,G_0)$, with right-translation by elements of $G_0$ corresponding to linearly changing the frame within the distribution $D$ and right-translation by elements of $P_+$ amounting to changing the choice of open Schubert cell through the underlying point $\quot{P}(g)$. This makes $\Zhe_2/P$ into a compactification of $G_-$ in such a way that the local symmetries from $\mathfrak{g}$ extend to global symmetries, exactly as we wanted.

On each of these open Schubert cells, which we should again think of as analogues of affine charts for this model geometry, we get a natural $(2,3,5)$-distribution $D$ from the geometry of $(G_-G_0,G_0)$. Using the $\Ad_P$-invariant filtration of $\mathfrak{g}$ given by successively adding up the grading components, so that $\mathfrak{g}^i:=\sum_{j\geq i}\mathfrak{g}_j$, we can simultaneously extend all of these distributions to a global $\Zhe_2$-invariant $(2,3,5)$-distribution on $\Zhe_2/P$ by redefining
\[D:=\mathrm{q}_{{}_P*}\MC{\Zhe_2}^{-1}(\mathfrak{g}^{-1})=\mathrm{q}_{{}_P*}\MC{\Zhe_2}^{-1}(\mathfrak{g}_{-1}+\mathfrak{g}_0+\mathfrak{g}_1+\mathfrak{g}_2+\mathfrak{g}_3).\]
This new $D$ restricts to the old one on each open Schubert cell because $\mathfrak{g}^{-1}=\mathfrak{g}_{-1}+\mathfrak{p}$ and $\quot{P}$ restricts to the quotient map $\quot{G_0}$ for each open Schubert cell. Thus, the model geometry $(\Zhe_2,P)$ carries a canonical $(2,3,5)$-distribution determined by the Lie-theoretic structure, and this distribution extends the natural $(2,3,5)$-distribution on $G_-$.

\section{Proof of the 1:3 ratio theorem}\label{proofsection}
In this section, we will prove the main theorem from the introduction\linebreak by relating the model $(\Zhe_2,P)$ that we have just constructed to the geometry of rolling spheres $(\mathrm{S}(\Orth(3)\times\Orth(3)),\Orth(2))$ from Definition \ref{rollingsph}. Similar to \cite{BorMontgomery2009}, the core idea of the proof is to reframe the geometries in terms of a maximal compact subgroup $K<\Zhe_2$, described in the first subsection below. As we will see in Section \ref{comparison}, the geometry induced by the action of $K$ on $\Zhe_2/P$ happens to be locally isomorphic to the geometry of rolling spheres, and this will allow us to single out certain choices of rolling distribution $D_{r_0/r_1}$ with additional local symmetry.

Setting up and demonstrating this local equivalence will require a bit of Lie theory
. To compare the geometries, we will specify a favorable form for a covering homomorphism $\rho$ from $\Spin(3)\times\Spin(3)$ to $K$ in Section \ref{rhoform}. Then, in Section \ref{intersecttwice}, we will see from the root diagram of $\mathfrak{g}$ that two particular one-parameter subgroups of $\Zhe_2$---corresponding to the ones described in the introduction---must intersect exactly twice, which further pins down the form of $\rho$. Finally, we will use $\rho$ to describe the local isomorphism between the geometries and pick out the 1:3 ratio in Section \ref{comparison}, then finish the proof by showing that other ratios of radii do not give additional local symmetry in Section \ref{otherratios}.

\subsection{The maximal compact subgroup of \texorpdfstring{$\Zhe_2$}{Zhe2}}\hfill \\
Recall the Cartan involution $\theta$ that we defined in Section \ref{structurestuff} above. Using the isomorphism $\Ad:\Zhe_2\to\Aut(\mathfrak{g})$, we may identify $\theta\in\Aut(\mathfrak{g})$ with its preimage $\hat{\theta}:=\Ad^{-1}(\theta)\in\Zhe_2$, so that conjugation by $\hat{\theta}\in\Zhe_2$ gives a Lie group automorphism of $\Zhe_2$ that induces $\theta=\Ad_{\hat{\theta}}\in\Aut(\mathfrak{g})$. From this, we get a maximal compact subgroup $K<\Zhe_2$ given by the centralizer
\[K:=\mathrm{Z}_{\Zhe_2}(\hat{\theta})=\{k\in\Zhe_2:\hat{\theta} k\hat{\theta}^{-1}=k\},\]
with corresponding Lie subalgebra $\mathfrak{k}:=\{X\in\mathfrak{g}:\theta(X)=X\}$.

\begin{lemma}$K$ is a 6-dimensional connected compact Lie group with 2-dimensional maximal torus.\end{lemma}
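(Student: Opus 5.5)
Three things have to be shown: $\dim K=6$, that $K$ is compact and connected, and that its maximal torus is $2$-dimensional. I will handle them in that order, using the Cartan decomposition $\mathfrak{g}=\mathfrak{k}\oplus\mathfrak{k}^\perp$ from Section \ref{structurestuff}.

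\textbf{Dimension.} Since $\theta(\mathfrak{g}_i)=\mathfrak{g}_{-i}$, the fixed subalgebra $\mathfrak{k}$ splits according to the pairs $\mathfrak{g}_i\oplus\mathfrak{g}_{-i}$. For $i\neq 0$, the fixed vectors in $\mathfrak{g}_i\oplus\mathfrak{g}_{-i}$ are exactly $X+\theta(X)$ with $X\in\mathfrak{g}_{-i}$, so they contribute $\dim\mathfrak{g}_{-i}$. In $\mathfrak{g}_0\approx\mathfrak{gl}_2\mathbb{R}$ the fixed vectors satisfy $R=-R^\top$, giving $\mathfrak{so}(2)$ of dimension $1$. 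Altogether
\[\dim\mathfrak{k}=2+1+2+1=6.\]
Since $\mathfrak{k}$ is the Lie algebra of $K$, this gives $\dim K=6$.

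\textbf{Compactness and connectedness.} Because $\kgf(X,\theta X)<0$ for $X\neq0$, the Killing form is negative definite on $\mathfrak{k}$, so $\mathfrak{k}$ is a compact Lie algebra.
\begin{itemize}
\item Under $\Ad:\Zhe_2\to\Aut(\mathfrak{g})$, which is an isomorphism by Proposition \ref{innerauto}, $K$ corresponds to $\{\phi\in\Aut(\mathfrak{g}):\phi\theta=\theta\phi\}$.
\item This group preserves the positive definite inner product $-\kgf(\cdot,\theta\,\cdot)$, so it is a closed subgroup of an orthogonal group and hence compact.
\item For connectedness I would use the global Cartan decomposition: the map $K\times\mathfrak{k}^\perp\to\Zhe_2$, $(k,X)\mapsto k\exp(X)$, is a diffeomorphism. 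This is the standard fact for real semisimple groups with finite center, and $\Zhe_2$ is centerless.
\item Since $\Zhe_2$ is connected by definition and $\mathfrak{k}^\perp$ is contractible, it follows that $K$ is connected.
\end{itemize}
This step is the main obstacle, since it rests on invoking the Cartan decomposition theorem correctly for the group $\Zhe_2\simeq\Aut(\mathfrak{g})$ as the paper defines it. If a more self-contained route is wanted, I would instead identify $\mathfrak{k}\cong\mathfrak{so}(3)\oplus\mathfrak{so}(3)$ and check directly that $K$ is generated by $\exp(\mathfrak{k})$, but I expect citing the Cartan decomposition to suffice.

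\textbf{Maximal torus.} The rank of a compact connected Lie group equals the dimension of a Cartan subalgebra of $\mathfrak{k}$. I would find two independent commuting elements of $\mathfrak{k}$:
\[T_1=\rotoiso\in\mathfrak{so}(2)\subset\mathfrak{g}_0,\qquad T_2=e_1\wedge e_2+\theta(e_1\wedge e_2)\in\mathfrak{g}_{-2}+\mathfrak{g}_2.\]
To see that they commute:
\begin{itemize}
\item $\mathfrak{so}(2)$ acts on $\Lambda^2\mathbb{R}^2$ by the trace, which vanishes.
\item The action on $\Lambda^2(\mathbb{R}^2)^\vee$ is likewise trivial.
\end{itemize}
So $[T_1,T_2]=0$, and the torus has dimension at least $2$.

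For the upper bound, $\mathfrak{k}$ is compact of dimension $6$, so it is $\mathfrak{u}(1)^a\oplus$ (semisimple compact). Its rank is at most the complex rank of $\mathfrak{g}_\mathbb{C}$, which is $2$, because a Cartan subalgebra of $\mathfrak{k}$ is an abelian subalgebra of semisimple elements in $\mathfrak{g}$. Hence the maximal torus is exactly $2$-dimensional, and in fact $\mathfrak{k}\cong\mathfrak{so}(3)\oplus\mathfrak{so}(3)$, as needed later in the paper.
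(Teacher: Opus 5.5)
Your proposal is correct and follows essentially the paper's plan. You count the $\theta$-fixed vectors $X+\theta(X)$ by grading components rather than by the six root pairs, and you get connectedness from the global Cartan decomposition instead of the paper's deformation retraction onto a maximal compact subgroup (its standard statement does include that the full fixed group of the global involution is the connected analytic subgroup, so the step you flagged is sound). You then bound the rank below by a commuting perpendicular long/short root pair, $\pm(\varepsilon_2-\varepsilon_3)$ and $\pm\varepsilon_1$ instead of the paper's $\pm(\varepsilon_1-\varepsilon_3)$ and $\pm\varepsilon_2$, and above by the complex rank $2$; your explicit compactness argument via the invariant inner product $-\kgf(\cdot,\theta\,\cdot)$ fills in a point the paper takes from the general theory of Cartan involutions.
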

\begin{proof}Connectedness of $K$ is just a consequence of the fact that $\Zhe_2$ is connected, since Lie groups with finitely many connected components must deformation retract onto their maximal compact subgroups.

All other information in the lemma is essentially visible directly from the root diagram for $\Zhe_2$, utilizing that $\mathfrak{g}$ is split real with maximally noncompact $\theta$-stable Cartan subalgebra $\mathfrak{a}=\langle\begin{smallbmatrix}1 & 0 \\ 0 & 0\end{smallbmatrix},\begin{smallbmatrix}0 & 0 \\ 0 & 1\end{smallbmatrix}\rangle<\mathfrak{g}_0$. Indeed, since $\mathfrak{g}$ is split real, all of the (restricted) root spaces are 1-dimensional and the Cartan subalgebra $\mathfrak{a}$ contains no compact subalgebra, so we get a basis for $\mathfrak{k}$ indexed by the (nonzero) root pairs $\pm\nu$: for each positive root $\nu$, we may pick a nonzero $\eta_\nu\in\mathfrak{g}_\nu$, which gives us $\theta(\eta_\nu)\in\mathfrak{g}_{-\nu}$ and $\eta_\nu+\theta(\eta_\nu)\in\mathfrak{k}$. The dimension of $\mathfrak{k}$ is, therefore, just the number of such root pairs---or equivalently, the number of positive roots---which is $6$. To see that $K$ has maximal tori of dimension $2$, note that each short root pair $\pm\nu_\text{short}$ has a unique pair of long roots $\pm\nu_\text{long}$ perpendicular to it. From the root diagram, we can see (in Figure \ref{perproots}) that $a\nu_\text{short}+b\nu_\text{long}$ is never another root for $a,b\in\mathbb{Z}$ unless either $a$ or $b$ is $0$, so in particular,
\[\left[\eta_{\nu_\text{short}}\!\!+\theta(\eta_{\nu_\text{short}}),\eta_{\nu_\text{long}}\!\!+\theta(\eta_{\nu_\text{long}})\right]=0.\]
Consequently, we get a 2-dimensional torus in $K$ generated by these two elements. Since the complexification $\mathbb{C}\otimes\mathfrak{g}$ is the complex exceptional simple Lie algebra of (complex) rank 2, we know that $\mathfrak{g}$ cannot contain a toral subalgebra of dimension greater than 2, so such a 2-dimensional torus must be maximal.\mbox{\qedhere}\end{proof}

\begin{figure}
\centering\includegraphics[width=0.7\textwidth]{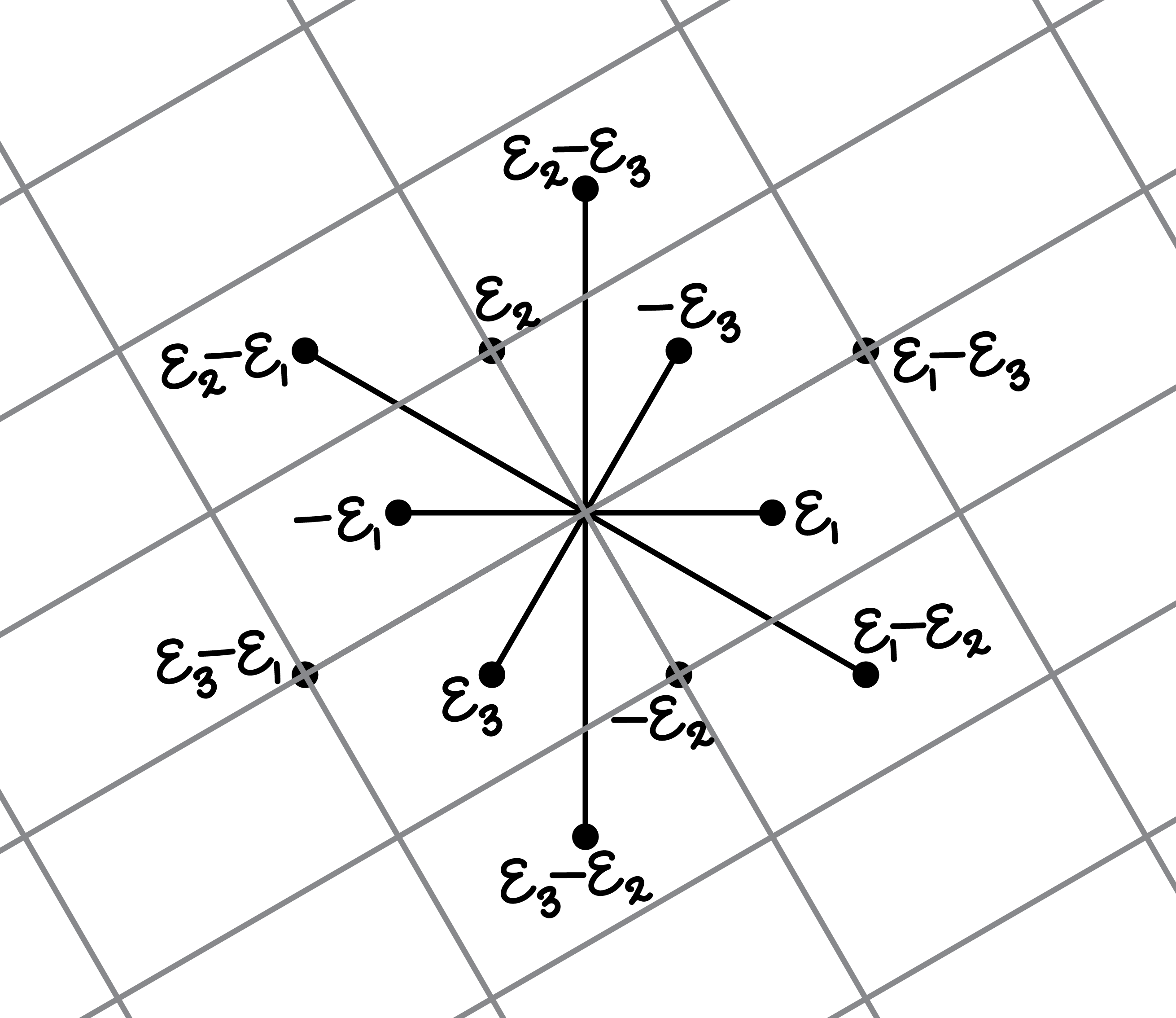}
\caption{A picture showing that the integer lattice generated by $\pm\varepsilon_2$ and $\pm(\varepsilon_1-\varepsilon_3)$ only intersects the set of (nonzero) roots for $\mathfrak{g}$ at $\pm\varepsilon_2$ and $\pm(\varepsilon_1-\varepsilon_3)$}
\label{perproots}
\end{figure}

Up to isomorphism of Lie groups, $\Spin(3)=\widetilde{\SOrth(3)}\simeq\SU(2)\simeq\Sp(1)$ is the only simply connected compact simple Lie group of dimension less than or equal to 6. Therefore, by the classification of connected compact Lie groups, an immediate consequence of the above lemma is the following.

\begin{corollary}\label{coverexists} There exists a covering homomorphism
\[\rho:\Spin(3)\times\Spin(3)\to K,\]
so that $\mathfrak{k}\approx\mathfrak{so}(3)\oplus\mathfrak{so}(3)$.\end{corollary}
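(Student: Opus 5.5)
The plan is to read off the structure of $K$ from the preceding lemma and then apply the structure theorem for connected compact Lie groups. By that lemma, $K$ is connected, compact, $6$-dimensional, and has a $2$-dimensional maximal torus.

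\emph{Step 1: $\mathfrak{k}$ is reductive.} Every compact Lie algebra is reductive, so $\mathfrak{k}=\mathfrak{z}(\mathfrak{k})\oplus[\mathfrak{k},\mathfrak{k}]$, where $[\mathfrak{k},\mathfrak{k}]$ is compact semisimple. Correspondingly, $K$ is covered by $Z_0\times\widetilde{K_{ss}}$. Here $Z_0$ is the identity component of the center, a torus, and $\widetilde{K_{ss}}$ is the simply connected group with Lie algebra $[\mathfrak{k},\mathfrak{k}]$, which is compact.

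\emph{Step 2: $\mathfrak{k}$ has no center.} I would show this directly from the explicit basis $\eta_\nu+\theta(\eta_\nu)$ of $\mathfrak{k}$ used in the lemma. If $X=\sum_\nu c_\nu(\eta_\nu+\theta(\eta_\nu))$ were central, it would commute with the toral elements $T_\nu=\eta_\nu+\theta(\eta_\nu)$. Write $\eta_\nu+\theta(\eta_\nu)=\exp(\tfrac{\pi}{2}\ad_{\cdot})$-conjugate of an element of $\mathfrak{a}$. Then the centralizer of $T_\nu$ in $\mathfrak{g}$ is a Cayley-transformed copy of the centralizer of $H_\nu:=[\eta_\nu,\theta(\eta_\nu)]\in\mathfrak{a}$. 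Using two non-proportional roots, the common centralizer in $\mathfrak{k}$ of $T_{\nu_\text{short}}$ and $T_{\nu_\text{long}}$ is just the torus $\mathfrak{t}$ they span. Rather than finishing that computation, I would use a cleaner route. A central element of $\mathfrak{k}$ lies in every maximal torus, and $\operatorname{Ad}_K$ acts on $\mathfrak{t}$ through the Weyl group. Alternatively, I would note that $\mathfrak{z}(\mathfrak{k})$ acts on $\mathfrak{k}^\perp$ commuting with the $\ad_{\mathfrak{k}}$-action. $\mathfrak{k}^\perp$ is an $8$-dimensional irreducible $\mathfrak{k}$-module, irreducible since $\mathfrak{g}$ is simple and the Cartan decomposition is then irreducible for $\mathfrak{g}$ not of the form $\mathfrak{h}\oplus\mathfrak{h}$. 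A nonzero central element would therefore give $\mathfrak{k}^\perp$ a complex structure commuting with $\mathfrak{k}$. That would make $\mathfrak{g}/\mathfrak{k}$ Hermitian symmetric, which is excluded because $\mathrm{G}_{2(2)}/K$ is not Hermitian: its restricted root system is $\mathrm{G}_2$, not of type $\mathrm{C}$ or $\mathrm{BC}$. So $\mathfrak{z}(\mathfrak{k})=0$, $\mathfrak{k}$ is semisimple, and $Z_0$ is trivial.

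\emph{Step 3: identify the factors.} Now $\widetilde{K_{ss}}$ is a simply connected compact semisimple group of dimension $6$ and rank $2$. It is therefore a product of simply connected compact simple groups whose dimensions sum to $6$ and whose ranks sum to $2$. The only simply connected compact simple group of dimension $\le 6$ is $\Spin(3)$, of dimension $3$ and rank $1$. Hence $\widetilde{K_{ss}}\simeq\Spin(3)\times\Spin(3)$, and the universal covering map gives $\rho:\Spin(3)\times\Spin(3)\to K$, a covering homomorphism with finite kernel in the center. It follows that $\mathfrak{k}\approx\mathfrak{so}(3)\oplus\mathfrak{so}(3)$.

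The main obstacle is Step 2, ruling out a central torus. Dimension and rank alone allow, for example, $\mathfrak{u}(1)^3\oplus\mathfrak{so}(3)$ only at rank $4$, so the rank count already kills most alternatives. Let me redo the count: a center of dimension $z$ plus semisimple part with rank $2-z$ and dimension $6-z$. For $z=1$ this needs a rank $1$ compact semisimple algebra of dimension $5$, which does not exist. For $z=2$ it needs dimension $4$ and rank $0$, which is impossible. So the counting argument alone settles Step 2, and I would present that instead of the Hermitian argument, keeping the latter only as a sanity check.
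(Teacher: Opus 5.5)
Your argument is correct and is essentially the paper's. The paper also deduces the corollary from the preceding lemma ($K$ connected, compact, $6$-dimensional, with $2$-dimensional maximal torus), the structure theory of connected compact Lie groups, and the fact that $\Spin(3)$ is the only simply connected compact simple group of dimension at most $6$. Your dimension--rank count for the center ($z=1$ would require a $5$-dimensional rank-$1$ compact semisimple algebra, and $z=2$ a $4$-dimensional rank-$0$ one) makes explicit a step the paper leaves implicit, so you can drop the unfinished Cayley-transform sketch and the Hermitian-symmetric detour in Step~2.
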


This is where the geometry of rolling spheres from Definition \ref{rollingsph} begins to enter our picture, via the isomorphism $\mathfrak{k}\approx\mathfrak{so}(3)\oplus\mathfrak{so}(3)$. To help keep the reader in the right frame of mind, we would like to remark here how fundamentally \textit{non-miraculous} this fact is, and we hope that our chosen method of proof above reflects this. Being a connected Lie group, $\Zhe_2$ necessarily has a connected maximal compact subgroup, and since we are restricted to such low dimension and rank, we have very limited options for what this could be; indeed, once we know the specific dimension and rank, there is exactly one option up to finite coverings. In other words, the fact that rolling spheres are relevant here should really be seen as a consequence of low rank forcing certain options upon us, rather than there being something particularly special about rolling spheres themselves.

\subsection{Specifying the form of \texorpdfstring{$\rho:\mathrm{Spin}(3)\times\mathrm{Spin}(3)\to K$}{the homomorphism}}\label{rhoform}\hfill \\
From Corollary \ref{coverexists} above, we know that there must exist \textit{some} covering homomorphism $\rho:\Spin(3)\times\Spin(3)\to K$, which necessarily gives us a Lie algebra isomorphism $\rho_*:\mathfrak{so}(3)\oplus\mathfrak{so}(3)\to\mathfrak{k}$. In this subsection, we will find a convenient and geometrically meaningful form for $\rho$.

Let us start by considering the element $\rotoiso\in\mathfrak{k}\cap\mathfrak{g}_0\approx\mathfrak{o}(2)$.

\begin{lemma}\label{setrotoiso} Up to conjugation, we may choose $\rho$ such that
\[\rho_*^{-1}\left(\rotoiso\right)=\left(\begin{smallbmatrix}0 & 0 & 0 \\ 0 & 0 & -1 \\ 0 & 1 & 0\end{smallbmatrix},\begin{smallbmatrix}0 & 0 & 0 \\ 0 & 0 & -1 \\ 0 & 1 & 0\end{smallbmatrix}\right).\]\end{lemma}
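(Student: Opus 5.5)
The plan is to determine the conjugacy class of $\rho_*^{-1}\left(\rotoiso\right)$ in $\mathfrak{so}(3)\oplus\mathfrak{so}(3)$ from the spectrum of $\ad$ of this element on $\mathfrak{k}$. Then we replace $\rho$ by $\rho\circ c$ for a suitable inner automorphism $c$ of $\Spin(3)\times\Spin(3)$; this is again a covering homomorphism onto $K$. Write $J:=\rotoiso$ and $X_0:=\begin{smallbmatrix}0 & 0 & 0 \\ 0 & 0 & -1 \\ 0 & 1 & 0\end{smallbmatrix}$. Every element of $\mathfrak{so}(3)$ is $\Ad_{\SOrth(3)}$-conjugate to $aX_0$ for a unique $a\geq 0$. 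The sign can be absorbed, since conjugating by a rotation by $\pi$ about an axis perpendicular to the $X_0$-axis sends $X_0\mapsto -X_0$. Hence $\rho_*^{-1}(J)$ is conjugate to $(aX_0,bX_0)$ for some $a,b\geq 0$, and it suffices to prove $a=b=1$. On the $\mathfrak{so}(3)\oplus\mathfrak{so}(3)$ side, $\ad_{(aX_0,bX_0)}$ has eigenvalues $0,\pm ia$ on the first summand and $0,\pm ib$ on the second. Since $\rho_*$ is a Lie algebra isomorphism, it is enough to compute the spectrum of $\ad_J|_{\mathfrak{k}}$ and show that it is $\{0,0,\pm i,\pm i\}$.

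To compute this spectrum, first note that $\theta(J)=-J^\top=J$, so $\ad_J$ commutes with $\theta$. Also, $J\in\mathfrak{g}_0$ preserves each grading component. Hence $\ad_J$ preserves $\mathfrak{k}\cap\mathfrak{g}_0=\mathfrak{o}(2)$ and, for each $i>0$, the space $\mathfrak{k}\cap(\mathfrak{g}_i+\mathfrak{g}_{-i})=\{\eta+\theta(\eta):\eta\in\mathfrak{g}_{-i}\}$. The map $\eta\mapsto\eta+\theta(\eta)$ from $\mathfrak{g}_{-i}$ to this space intertwines $\ad_J$, because $\theta$ commutes with $\ad_J$. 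So the spectrum of $\ad_J|_{\mathfrak{k}}$ is the union of $\{0\}$ (from $\mathfrak{o}(2)$) with the spectra of $J$ acting on $\mathfrak{g}_{-1},\mathfrak{g}_{-2},\mathfrak{g}_{-3}$ as $\GLin_2\mathbb{R}$-representations. On $\mathfrak{g}_{-1}\approx\mathbb{R}^2$, $J$ acts as the rotation generator, with eigenvalues $\pm i$. On $\mathfrak{g}_{-2}\approx\Lambda^2\mathbb{R}^2$, it acts by $\mathrm{tr}(J)=0$. On $\mathfrak{g}_{-3}\approx\Lambda^2\mathbb{R}^2\otimes\mathbb{R}^2$, it acts by $\mathrm{tr}(J)\mathds{1}+J=J$, with eigenvalues $\pm i$. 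Altogether the spectrum is $\{0,0,\pm i,\pm i\}$, with the two zeros coming from $\mathfrak{o}(2)$ and $\mathfrak{g}_{-2}$.

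Comparing the two spectra gives $\{0,0,\pm ia,\pm ib\}=\{0,0,\pm i,\pm i\}$ as multisets. This forces $a,b\neq 0$, since otherwise $0$ would appear with multiplicity at least $4$. With $a,b\geq 0$, it then forces $a=b=1$. Finally, we choose $(g_0,g_1)\in\Spin(3)\times\Spin(3)$ so that $\Ad_{(g_0,g_1)}(X_0,X_0)=\rho_*^{-1}(J)$, and replace $\rho$ by $\rho\circ C_{(g_0,g_1)}$, where $C_{(g_0,g_1)}$ denotes conjugation by $(g_0,g_1)$. The only delicate point is the bookkeeping above: one must verify that $\theta$ commutes with $\ad_J$, and must use the correct weights of $J$ on $\mathfrak{g}_{-2}$ and $\mathfrak{g}_{-3}$, namely that the determinant twist contributes $\mathrm{tr}(J)=0$. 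Once that is checked, the rest is immediate.
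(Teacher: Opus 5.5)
Your proof is correct. After the same reduction to $(aX_0,bX_0)$ with $a,b\geq 0$, you pin down $a=b=1$ by a slightly different mechanism than the paper.

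The paper looks only at the $2$-dimensional subspace $\{v+\theta(v):v\in\mathfrak{g}_{-1}\}$, on which $\ad_{\rotoiso}^2=-\mathrm{id}$. It computes that $\ad_{(aX_0,bX_0)}^2$ acts by $-a^2$ and $-b^2$ on the complements of the $X_0$-axes in the two summands, and by $0$ on the axes. It then concludes $a^2=b^2=1$ from the assertion that $\rho_*^{-1}$ of this subspace cannot lie in a single $\mathfrak{so}(3)$ summand.

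You instead use the spectrum of $\ad_{\rotoiso}$ on all of $\mathfrak{k}$. The essential extra input is that the $\mathfrak{g}_{\pm 3}$ part $\{\bar v+\theta(\bar v)\}$ contributes a second pair $\pm i$, and the $\mathfrak{g}_{\pm 2}$ part contributes a second $0$. So the multiset comparison alone excludes lopsided cases such as $a=1$, $b\neq 1$.

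This makes your argument more self-contained, since the paper states its ``not in one summand'' claim without proof. The claim is true: iterated brackets of elements $v+\theta(v)$ already have nonzero $\mathfrak{g}_{-2}$ and $\mathfrak{g}_{-3}$ components, so these elements generate a subalgebra of dimension at least $4$.

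The paper's version has its own advantage. It extracts exactly the eigenspace information on $\{v+\theta(v)\}$ that it reuses in the next lemma, to place $\rho_*^{-1}(\{v+\theta(v)\})$ in the off-axis parts of the two summands.
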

\begin{proof}Via $\Spin(3)$, every element of $\mathfrak{so}(3)$ is conjugate to one of the form
\[a\begin{smallbmatrix}0 & 0 & 0 \\ 0 & 0 & -1 \\ 0 & 1 & 0\end{smallbmatrix}\]
for some $a\geq 0$, so every element of $\mathfrak{so}(3)\oplus\mathfrak{so}(3)$ is conjugate to one of the form
\[\left(a\begin{smallbmatrix}0 & 0 & 0 \\ 0 & 0 & -1 \\ 0 & 1 & 0\end{smallbmatrix},b\begin{smallbmatrix}0 & 0 & 0 \\ 0 & 0 & -1 \\ 0 & 1 & 0\end{smallbmatrix}\right)\]
for some $a,b\geq 0$. In particular, we may choose for $\rho_*^{-1}(\rotoiso)$ to take this form. But the adjoint action of $\rotoiso$ preserves the 2-dimensional subspace $\{v+\theta(v)\in\mathfrak{k}:v\in\mathfrak{g}_{-1}\}$, acting as $\rotoiso\in\mathfrak{gl}_2\mathbb{R}$ does on $\mathbb{R}^2\approx\mathfrak{g}_{-1}$, so that
\[\ad_{\rotoiso}^2|_{\{v+\theta(v)\in\mathfrak{k}:v\in\mathfrak{g}_{-1}\}}=-\mathrm{id}_{\{v+\theta(v)\in\mathfrak{k}:v\in\mathfrak{g}_{-1}\}}.\]
Writing $\rho_*^{-1}(\rotoiso)$ in the form above, we have that
\[\textstyle\ad_{\rho_*^{-1}\left(\rotoiso\right)}^2\!\left(\begin{smallbmatrix}0 & -x & -y \\ x & 0 & -z \\ y & z & 0\end{smallbmatrix}\!,\begin{smallbmatrix}0 & -p & -q \\ p & 0 & -r \\ q & r & 0\end{smallbmatrix}\right)\!=\!\left(-a^2\!\begin{smallbmatrix}0 & -x & -y \\ x & 0 & 0 \\ y & 0 & 0\end{smallbmatrix}\!,-b^2\!\begin{smallbmatrix}0 & -p & -q \\ p & 0 & 0 \\ q & 0 & 0\end{smallbmatrix}\right)\!,\]
so since the subspace $\rho_*^{-1}(\{v+\theta(v):v\in\mathfrak{g}_{-1}\})$ cannot be contained in just one of the individual $\mathfrak{so}(3)$ summands of $\mathfrak{so}(3)\oplus\mathfrak{so}(3)$, we must have $a^2=b^2=1$, hence $a=b=1$ since $a,b\geq 0$.\mbox{\qedhere}\end{proof}

We will address the specifics of how to interpret this in Section \ref{comparison}, but for now, notice that our choice of $\rho_*^{-1}(\rotoiso)$ happens to generate the Lie algebra of the isotropy for the model geometry of rolling spheres from Definition \ref{rollingsph}.

As an aside, note that knowing $\rho_*^{-1}(\rotoiso)$ lets us determine that $\rho$ must be a double cover.

\begin{corollary}The homomorphism $\rho$ is a double cover onto $K$.\end{corollary}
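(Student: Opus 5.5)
The kernel of $\rho$ is a finite central subgroup of $\Spin(3)\times\Spin(3)$, so it lies in the center $\{(\pm 1,\pm 1)\}\simeq\mathbb{Z}/2\times\mathbb{Z}/2$. The plan is to identify it exactly by testing which central elements act trivially on $\mathfrak{g}$. Since $\Ad:\Zhe_2\to\Aut(\mathfrak{g})$ is an isomorphism, an element $z\in\Spin(3)\times\Spin(3)$ lies in $\ker\rho$ if and only if $\Ad_{\rho(z)}=\mathrm{id}_\mathfrak{g}$. Each central element $z$ can be written as $\exp$ of a multiple of the element $Z:=\rho_*^{-1}(\rotoiso)$ fixed in Lemma \ref{setrotoiso}, or a variant of it. So the question reduces to computing $\exp(t\,\ad_{\rotoiso})$ and relatives on $\mathfrak{g}$.

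\textbf{Step 1.} In $\Spin(3)$, with $J:=\begin{smallbmatrix}0&0&0\\0&0&-1\\0&1&0\end{smallbmatrix}$, the curve $\exp(tJ)$ reaches the element $-1$ at $t=2\pi$ and returns to $1$ at $t=4\pi$. Hence $(-1,-1)=\exp(2\pi Z)$ with $Z=(J,J)$, and by Lemma \ref{setrotoiso} we get $\rho(-1,-1)=\exp\bigl(2\pi\rotoiso\bigr)$ in $\Zhe_2$.

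\textbf{Step 2.} Compute $\Ad$ of this element as $\exp(2\pi\,\ad_{\rotoiso})$ on $\mathfrak{g}$. The matrix $\rotoiso\in\mathfrak{gl}_2\mathbb{R}$ is traceless and generates rotations. On each grading component it acts through the representations listed in Definition \ref{tanakaprolongation}: the standard representation on $\mathfrak{g}_{\pm1}$ and $\mathfrak{g}_{\pm3}$ (the $\Lambda^2$ factors are trivial since the trace vanishes), trivially on $\mathfrak{g}_{\pm2}$, and by the adjoint action on $\mathfrak{g}_0$. All of these have integer weights, so $\exp(2\pi\,\ad_{\rotoiso})=\mathrm{id}$. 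Therefore $(-1,-1)\in\ker\rho$, and $\rho$ is at least a double cover.

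\textbf{Step 3.} Show that $(-1,1)$, and hence also $(1,-1)=(-1,-1)(-1,1)$, is not in $\ker\rho$. Use $\rho_*(J,0)$, which is half of $\rotoiso$ plus half of the element $\rho_*(J,-J)$. These two commute, and both lie in a maximal torus of $\mathfrak{k}$. By the proof of Lemma \ref{setrotoiso}, $\ad_{(J,0)}$ acts on $\rho_*^{-1}(\{v+\theta(v)\})$ with eigenvalues $\pm i$ only if that subspace meets the first factor. It does, since it is not contained in the second summand and is $\ad_Z$-invariant with $\ad_Z^2=-1$, hence spans directions in both factors' $\langle e_1,e_2\rangle$-parts. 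Then $\exp(2\pi\,\ad_{(J,0)})$ must act on $\mathfrak{g}$ compatibly with the $\ad_J$-eigenvalues in $\mathfrak{so}(3)$, which are integers.

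This is the step I expect to be the main obstacle. A cleaner approach is to look at $\mathfrak{g}$ as a $\mathfrak{k}\approx\mathfrak{so}(3)\oplus\mathfrak{so}(3)$-representation on $\mathfrak{k}^\perp$, which is 8-dimensional. If $\rho$ were a $4$-fold cover, $K\simeq\SOrth(3)\times\SOrth(3)$, and $\mathfrak{k}^\perp$ would be a sum of tensor products of odd-dimensional irreducibles. The possible dimensions are $1,3,5,7,9,15,\dots$, and $8$ must be realized as such a sum. Weights of $Z=(J,J)$ on $\mathfrak{k}^\perp$ can be read off as above. On $\mathfrak{g}_{\pm1},\mathfrak{g}_{\pm3}$ the weights are $\pm1$, each with multiplicity $2$ in $\mathfrak{k}^\perp$, and on $\mathfrak{a}\oplus$(symmetric part of $\mathfrak{g}_0$) the weights are $0,\pm2$. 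One must then rule out every decomposition of $\mathfrak{k}^\perp$ with these $Z$-weights into $\SOrth(3)\times\SOrth(3)$-representations, and the odd $Z$-weights are exactly what obstructs this.

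Concretely, I would argue that $\mathfrak{k}^\perp$ is irreducible of type $(\tfrac12,\tfrac32)$, i.e.\ $\mathbb{C}^2\otimes S^3\mathbb{C}^2$ (real form), of dimension $8$. Then $(-1,1)$ acts by $-1$ on it, so it is not in $\ker\rho$, giving $\ker\rho=\{(1,1),(-1,-1)\}$. Irreducibility would follow because $\mathfrak{g}$ is simple: a proper $\mathfrak{k}$-invariant subspace of $\mathfrak{k}^\perp$ would generate, together with $\mathfrak{k}$, a proper ideal. Combined with the $Z$-weight count $\{\pm1\}$ and $\{0,\pm2\}$, this pins down the half-integral spins.
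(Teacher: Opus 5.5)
Your Steps 1 and 2 match the first half of the paper's proof. The paper simply notes that $\exp(2\pi\rotoiso)=e$ already in $G_0\simeq\GLin_2\mathbb{R}$, so $(-1,-1)=\exp(2\pi\rho_*^{-1}(\rotoiso))\in\ker\rho$.

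The gap is in Step 3, where you must rule out $\ker\rho=\mathrm{Z}(\Spin(3)\times\Spin(3))$.

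\textbf{The $(J,0)$ sketch.} It proves nothing. Integrality of the $\ad_J$-eigenvalues on $\mathfrak{k}$ holds for every cover. What you would need are the $(J,0)$-eigenvalues on $\mathfrak{k}^\perp$, and you never determine them.

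\textbf{The ``cleaner approach''.} This rests on a false claim. Write $V_j$ for the spin-$j$ representation of $\mathfrak{so}(3)$. The $Z=(J,J)$-weights on $V_{j_1}\otimes V_{j_2}$ are $m_1+m_2$. These are integers whether $j_1,j_2$ are both integral or both half-integral, so they cannot detect the cover. Explicitly, $(V_2\otimes V_0)\oplus(V_1\otimes V_0)$ is an $8$-dimensional representation of $\SOrth(3)\times\SOrth(3)$. Its $Z$-weights are $\pm2$ once each, $\pm1$ twice each, and $0$ twice. These are exactly the $Z$-weights of $\mathfrak{k}^\perp$. (Your count also dropped the weight-$0$ line $\{x-\theta(x):x\in\mathfrak{g}_{-2}\}$.)

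\textbf{Irreducibility of $\mathfrak{k}^\perp$.} Everything therefore rests on this, and your justification fails as stated. If $U_1\subset\mathfrak{k}^\perp$ is $\mathfrak{k}$-invariant, then $\mathfrak{k}+U_1$ is merely a subalgebra, and the ideal it generates is all of $\mathfrak{g}$. The correct argument uses the Killing form.
\begin{itemize}
\item $\kgf$ is positive definite on $\mathfrak{k}^\perp$ and negative definite on $\mathfrak{k}$. So a $\mathfrak{k}$-invariant $U_1$ with $0\neq U_1\subsetneq\mathfrak{k}^\perp$ has a $\mathfrak{k}$-invariant orthogonal complement $U_2$ in $\mathfrak{k}^\perp$.
\item By invariance, $\kgf(\mathfrak{k},[U_1,U_2])=\kgf([\mathfrak{k},U_1],U_2)=0$. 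Since $[U_1,U_2]\subseteq\mathfrak{k}$, this forces $[U_1,U_2]=0$.
\item Then $U_1+[U_1,U_1]$ is a proper nonzero ideal, contradicting simplicity.
\end{itemize}

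\textbf{Finishing.} With real irreducibility in hand you need not identify the type $(\tfrac12,\tfrac32)$. Every real irreducible representation of $\SOrth(3)\times\SOrth(3)$ has odd dimension: the complex irreducibles $V_{j_1}\otimes V_{j_2}$ with $j_i\in\mathbb{Z}$ all have odd dimension and are of real type. This contradicts $\dim\mathfrak{k}^\perp=8$.

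That repairs your route, but the paper needs none of it. $\hat{\theta}$ is a nontrivial element of $\mathrm{Z}(K)$, since $K$ is defined as its centralizer. Meanwhile $\SOrth(3)\times\SOrth(3)$ is centerless. So $\ker\rho$ cannot be the whole center, and one line replaces your entire Step 3.
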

\begin{proof}Since $\exp(2\pi\rotoiso)=e$ in $G_0<\Zhe_2$, it follows that
\[\exp(2\pi\rho_*^{-1}(\rotoiso))\in\ker(\rho)\leq\mathrm{Z}(\Spin(3)\times\Spin(3)).\]
However, $\hat{\theta}\in\mathrm{Z}(K)$ is a nontrivial element of the center of $K$, so
\[\ker(\rho)\neq\mathrm{Z}(\Spin(3)\times\Spin(3))\simeq\mathbb{Z}/2\mathbb{Z}\times\mathbb{Z}/2\mathbb{Z}.\]
Thus, $\ker(\rho)=\langle\exp(2\pi\rho_*^{-1}(\rotoiso))\rangle$, so $\rho$ is a double cover.\mbox{\qedhere}\end{proof}

\newpage
The subspace $\{v+\theta(v):v\in\mathfrak{g}_{-1}\approx\mathbb{R}^2\}$ appearing in the proof of Lemma \ref{setrotoiso} will play an important role in what follows. Because we have $\theta(\mathfrak{g}_i)=\mathfrak{g}_{-i}$ for each $i$, the intersection of $\mathfrak{k}$ with the filtration component $\mathfrak{g}^{-1}=\mathfrak{g}_{-1}+\mathfrak{g}_0+\mathfrak{g}_1+\mathfrak{g}_2+\mathfrak{g}_3$ of $\mathfrak{g}$ is given by
\[\mathfrak{k}\cap\mathfrak{g}^{-1}=\{X\in\mathfrak{g}^{-1}:X=\theta(X)\}=\{v+\theta(v):v\in\mathfrak{g}_{-1}\}+\mathfrak{k}\cap\mathfrak{g}_0.\]
Intuitively, then, we might think of $\{v+\theta(v):v\in\mathfrak{g}_{-1}\}\approx\mathfrak{g}^{-1}/\mathfrak{p}$ as the way that $K$ describes the canonical $(2,3,5)$-distribution of $(\Zhe_2,P)$, ignoring the isotropy part $\mathfrak{k}\cap\mathfrak{g}_0=\langle\rotoiso\rangle$ of $\mathfrak{k}\cap\mathfrak{g}^{-1}$.

\begin{lemma}\label{setrolleye} Up to further conjugation by an element of
\[\exp\left(\mathbb{R}\begin{smallbmatrix}0 & 0 & 0 \\ 0 & 0 & -1 \\ 0 & 1 & 0\end{smallbmatrix}\right)\times\exp\left(\mathbb{R}\begin{smallbmatrix}0 & 0 & 0 \\ 0 & 0 & -1 \\ 0 & 1 & 0\end{smallbmatrix}\right)\leq\mathrm{Z}_{\Spin(3)\times\Spin(3)}(\rho_*^{-1}(\rotoiso)),\]
which preserves our choice of $\rho_*^{-1}(\rotoiso)$, we may choose $\rho$ such that, for \textbf{some} $r_0,r_1>0$,
\[\rho_*^{-1}(v+\theta(v))=\left(\tfrac{1}{r_0}\begin{smallbmatrix}0 & -v^\top \\ v & 0\end{smallbmatrix},\tfrac{1}{r_1}\begin{smallbmatrix}0 & -v^\top \\ v & 0\end{smallbmatrix}\right)\]
for all $v\in\mathfrak{g}_{-1}$.\end{lemma}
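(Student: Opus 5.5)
Write $J:=\begin{smallbmatrix}0 & 0 & 0 \\ 0 & 0 & -1 \\ 0 & 1 & 0\end{smallbmatrix}$ and $\mathfrak{m}:=\{\begin{smallbmatrix}0 & -x^\top \\ x & 0\end{smallbmatrix}:x\in\mathbb{R}^2\}\subset\mathfrak{so}(3)$. The plan is to use the eigenspace structure of $\ad_{\rotoiso}$. By Lemma \ref{setrotoiso}, $\rho_*^{-1}(\rotoiso)=(J,J)$. Let $V:=\{v+\theta(v):v\in\mathfrak{g}_{-1}\}$. The map $v\mapsto v+\theta(v)$ intertwines the action of $\rotoiso$ on $\mathfrak{g}_{-1}\approx\mathbb{R}^2$ with $\ad_{\rotoiso}|_V$; this uses $\theta(\rotoiso)=\rotoiso$. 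Hence $\ad_{\rotoiso}^2|_V=-\mathrm{id}$. The computation in the proof of Lemma \ref{setrotoiso} shows that the $(-1)$-eigenspace of $\ad_{(J,J)}^2$ on $\mathfrak{so}(3)\oplus\mathfrak{so}(3)$ is exactly $\mathfrak{m}\oplus\mathfrak{m}$. Therefore $\rho_*^{-1}(V)\subseteq\mathfrak{m}\oplus\mathfrak{m}$.

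Next I identify $\mathfrak{m}\cong\mathbb{R}^2$ via $x\mapsto\begin{smallbmatrix}0 & -x^\top \\ x & 0\end{smallbmatrix}$. Under this identification, $\ad_J$ acts on each factor as $\rotoiso$ acts on $\mathbb{R}^2$. The map
\[L:=\rho_*^{-1}\circ(v\mapsto v+\theta(v)):\mathbb{R}^2\to\mathfrak{m}\oplus\mathfrak{m}\cong\mathbb{R}^2\oplus\mathbb{R}^2\]
commutes with $\rotoiso$. So each of its components $L_0,L_1:\mathbb{R}^2\to\mathbb{R}^2$ commutes with $\rotoiso$. A real $2\times 2$ matrix commuting with $\rotoiso$ is of the form $a\mathds{1}+b\rotoiso$, i.e. a scaled rotation. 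Thus $L_j=s_j R_{\phi_j}$ with $s_j\geq 0$ and $R_{\phi_j}$ the rotation by angle $\phi_j$.

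Conjugating by $(\exp(t_0J),\exp(t_1J))$ fixes $(J,J)$ and acts on the two $\mathfrak{m}$ factors by rotations by independent angles $t_0,t_1$. The rotation angle may be doubled if we work in $\Spin(3)$, but it still sweeps all angles. Replacing $\rho$ by $\rho\circ\Ad$ of such an element, I choose the angles to cancel $\phi_0$ and $\phi_1$. Then $L_j=s_j\mathds{1}$, and setting $r_j:=1/s_j$ gives exactly the claimed formula.

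The main obstacle is showing $s_0,s_1>0$. This should follow as in Lemma \ref{setrotoiso}. If, say, $s_1=0$, then $\rho_*^{-1}(V)$ lies in the first $\mathfrak{so}(3)$ summand together with $(J,J)$-related pieces. Then $[V,V]$ lands in $\mathfrak{k}\cap\mathfrak{g}_0=\langle\rotoiso\rangle$ (grading: $[\mathfrak{g}_{-1},\mathfrak{g}_1]\subseteq\mathfrak{g}_0$ and $[\mathfrak{g}_{-1},\mathfrak{g}_{-1}]\subseteq\mathfrak{g}_{-2}$, projected to the $\theta$-fixed part). I would check directly from the bracket table that $[e_1+\theta(e_1),e_2+\theta(e_2)]$ has a nonzero component $e_1\wedge e_2+\theta(e_1\wedge e_2)$ outside $\mathfrak{g}_0$.

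On the other hand, if $V$ sat in one summand $\mathfrak{so}(3)$, the bracket of two elements of $\mathfrak{m}$ there would be a multiple of $J$ in that summand alone. That multiple is not of the form $c(J,J)$ unless it is zero, so it lies neither in $\rho_*^{-1}\langle\rotoiso\rangle$ nor in a compatible space, giving a contradiction. Alternatively, and more simply, I would just cite the already-used fact from the proof of Lemma \ref{setrotoiso} that $\rho_*^{-1}(V)$ is not contained in a single summand. I would also note that injectivity of $L$ (since $\rho_*$ is an isomorphism) rules out $s_0=s_1=0$. Combining these forces both $s_j>0$.
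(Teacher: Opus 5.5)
Your proposal is correct and follows essentially the paper's route: the $(-1)$-eigenspace of $\ad^2_{(J,J)}$ places $\rho_*^{-1}(V)$ in $\mathfrak{m}\oplus\mathfrak{m}$, conjugation by $\exp(\mathbb{R}J)\times\exp(\mathbb{R}J)$ aligns the two components, and positivity comes from the fact (asserted in the proof of Lemma \ref{setrotoiso}) that $\rho_*^{-1}(V)$ is not contained in a single summand. The only cosmetic difference is that you use $\rotoiso$-equivariance up front to see each component is a scaled rotation, whereas the paper aligns $e_1+\theta(e_1)$ first and extends afterwards.

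Your first, direct attempt at positivity does not close as written, since $[V,V]\not\subseteq\mathfrak{g}_0$ is perfectly consistent with $(J,0)\notin\mathbb{R}(J,J)$. So rely on the cited fact, or prove it: $(J,0)$ would normalize $\mathfrak{m}\oplus 0=\rho_*^{-1}(V)$, whereas $[[e_1+\theta(e_1),e_2+\theta(e_2)],e_1+\theta(e_1)]$ has the nonzero $\mathfrak{g}_{-3}$-component $6\,e_1\wedge e_2\otimes e_1$, which lies outside $V$.
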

\begin{proof}As we saw in the proof of Lemma \ref{setrotoiso}, the adjoint action of $\rotoiso$ squares to minus the identity on $\{v+\theta(v):v\in\mathfrak{g}_{-1}\}$, so our chosen form of $\rho_*^{-1}(\rotoiso)$ forces $\rho_*^{-1}(\{v+\theta(v):v\in\mathfrak{g}_{-1}\})$ to be contained in the subspace
\[\left\{\left(\begin{smallbmatrix}0 & -w^\top \\ w & 0\end{smallbmatrix},\begin{smallbmatrix}0 & -w'^\top \\ w & 0\end{smallbmatrix}\right)\in\mathfrak{so}(3)\oplus\mathfrak{so}(3):w,w'\in\mathbb{R}^2\right\}.\]
Because $\rho_*^{-1}(\{v+\theta(v):v\in\mathfrak{g}_{-1}\})$ cannot be contained in just one of the $\mathfrak{so}(3)$ summands and
\[\Ad_{\exp\left(t\begin{smallbmatrix}0 & 0 & 0 \\ 0 & 0 & -1 \\ 0 & 1 & 0\end{smallbmatrix}\right)}\!\begin{smallbmatrix}0 & -x & -y \\ x & 0 & 0 \\ y & 0 & 0\end{smallbmatrix}\!=\!\begin{smallbmatrix}0 & -\cos(t)x+\sin(t)y & -\sin(t)x-\cos(t)y \\ \cos(t)x-\sin(t)y & 0 & 0 \\ \sin(t)x+\cos(t)y & 0 & 0\end{smallbmatrix}\!,\]
we may find an element
\[\ell\in\exp\left(\mathbb{R}\begin{smallbmatrix}0 & 0 & 0 \\ 0 & 0 & -1 \\ 0 & 1 & 0\end{smallbmatrix}\right)\times\exp\left(\mathbb{R}\begin{smallbmatrix}0 & 0 & 0 \\ 0 & 0 & -1 \\ 0 & 1 & 0\end{smallbmatrix}\right)\]
such that each of the $\mathfrak{so}(3)$ summand components for $\Ad_\ell\rho_*^{-1}(e_1+\theta(e_1))$ is a positive multiple of $\big[\begin{smallmatrix}0 & -e_1^\top\\ e_1 & 0\end{smallmatrix}\big]$. Since $\Ad_\ell\rho_*^{-1}(\rotoiso)=\rho_*^{-1}(\rotoiso)$, we may therefore replace $\rho$ by $\rho\circ\mathrm{conj}_{\ell^{-1}}\!:g\mapsto\rho(\ell^{-1} g\ell)$ to get a covering homomorphism $\rho$ with the same value of $\rho_*^{-1}(\rotoiso)$ and such that
\[\rho_*^{-1}(e_1+\theta(e_1))=\left(\tfrac{1}{r_0}\begin{smallbmatrix}0 & -e_1^\top \\ e_1 & 0\end{smallbmatrix},\tfrac{1}{r_1}\begin{smallbmatrix}0 & -e_1^\top \\ e_1 & 0\end{smallbmatrix}\right)\]
for some $r_0,r_1>0$, which implies the desired condition by the action of $\rotoiso$ on $\{v+\theta(v):v\in\mathfrak{g}_{-1}\}$.\mbox{\qedhere}\end{proof}

Our peculiar use of $r_0$ and $r_1$ in the lemma above should, we hope, be redolent of the rolling distribution $D_{r_0/r_1}$ from Definition \ref{rollingdist}. Indeed, combining Lemmas \ref{setrotoiso} and \ref{setrolleye} tells us that we may choose $\rho$ in such a way that, for some $r_0,r_1>0$, $\rho_*$ restricts to an isomorphism from
\[\left\{\left(\tfrac{1}{r_0}\begin{smallbmatrix}0 & -v^\top \\ v & 0\end{smallbmatrix}\!,\tfrac{1}{r_1}\begin{smallbmatrix}0 & -v^\top \\ v & 0\end{smallbmatrix}\right):v\in\mathbb{R}^2\right\}+\mathfrak{o}(2)\subset\mathfrak{so}(3)\oplus\mathfrak{so}(3),\]
the subspace of $\mathfrak{so}(3)\oplus\mathfrak{so}(3)$ used to define the rolling distribution for the radii $r_0$ and $r_1$, to
\[\mathfrak{k}\cap\mathfrak{g}^{-1}=\{v+\theta(v):v\in\mathfrak{g}_{-1}\}+\mathfrak{k}\cap\mathfrak{g}_0,\]
which describes the canonical $(2,3,5)$-distribution for $(\Zhe_2,P)$ as it is seen by $K$. Again, we will explore the geometric meaning of this in more detail in Section \ref{comparison}.

For each $v\in\mathfrak{g}_{-1}\approx\mathbb{R}^2$, let
\[\bar{v}:=e_1\wedge e_2\otimes\rotoiso v\in\mathfrak{g}_{-3}\approx(\Lambda^2\mathbb{R}^2)\otimes\mathbb{R}^2.\]
Then,
\begin{align*}[v+\theta(v),\bar{v}+\theta(\bar{v})] & =[v,\bar{v}]+([\theta(v),\bar{v}]+[v,\theta(\bar{v})])+[\theta(v),\theta(\bar{v})] \\ & =0+(v^\top\rotoiso v)(e_1\wedge e_2-e_1^\vee\wedge e_2^\vee)+0=0,\end{align*}
so $v+\theta(v)$ commutes with $\bar{v}+\theta(\bar{v})$, and for $A\in\Orth(2)<\GLin_2\mathbb{R}\simeq G_0$,
\[\Ad_A(\bar{v}+\theta(\bar{v}))=\overline{Av}+\theta\!\left(\overline{Av}\right)\!,\]
so the map $v+\theta(v)\mapsto\bar{v}+\theta(\bar{v})$ is $\Orth(2)$-equivariant. Moreover, under the isomorphism between $\mathfrak{g}_{-3}+\mathfrak{g}_0+\mathfrak{g}_3$ and $\mathfrak{sl}_3\mathbb{R}$ from Section \ref{structurestuff}, the subalgebra
\[\{\bar{v}+\theta(\bar{v}):v\in\mathfrak{g}_{-1}\approx\mathbb{R}^2\}+\langle\rotoiso\rangle=\mathfrak{k}\cap(\mathfrak{g}_{-3}+\mathfrak{g}_0+\mathfrak{g}_3)\]
gets identified with $\mathfrak{so}(3)<\mathfrak{sl}_3\mathbb{R}$.

\begin{lemma}\label{setslideeye} Choosing $\rho$ to be as in Lemma \ref{setrolleye}, we must have
\[\rho_*^{-1}(\bar{v}+\theta(\bar{v}))=\pm\left(\begin{smallbmatrix}0 & -v^\top \\ v & 0\end{smallbmatrix}\!,\,-\!\begin{smallbmatrix}0 & -v^\top \\ v & 0\end{smallbmatrix}\right).\]
\end{lemma}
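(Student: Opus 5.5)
The plan is to pin down $\rho_*^{-1}(\bar{v}+\theta(\bar{v}))$ in three stages. First I constrain its shape using the $\Orth(2)$-equivariance and the commutation relation established just before the statement. Then I fix magnitudes using the fact that $\mathfrak{k}\cap(\mathfrak{g}_{-3}+\mathfrak{g}_0+\mathfrak{g}_3)\approx\mathfrak{so}(3)$. Finally I fix relative signs using the Killing form. Throughout, write $X_w:=\begin{smallbmatrix}0 & -w^\top \\ w & 0\end{smallbmatrix}\in\mathfrak{so}(3)$ for $w\in\mathbb{R}^2$, and $L:=\begin{smallbmatrix}0 & 0 & 0 \\ 0 & 0 & -1 \\ 0 & 1 & 0\end{smallbmatrix}$, so that $\rho_*^{-1}(\rotoiso)=(L,L)$ by Lemma \ref{setrotoiso}.

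\emph{Step 1 (shape).} The map $v+\theta(v)\mapsto\bar{v}+\theta(\bar{v})$ is $\Orth(2)$-equivariant. Hence $\ad_{\rotoiso}$ acts on $\{\bar{v}+\theta(\bar{v})\}$ exactly as it does on $\{v+\theta(v)\}$, and in particular it squares to $-\mathrm{id}$ there. Exactly as in the proof of Lemma \ref{setrolleye}, this forces $\rho_*^{-1}(\bar{v}+\theta(\bar{v}))=(X_{w},X_{w'})$ for some $w=\phi(v)$ and $w'=\phi'(v)$, with $\phi,\phi'$ linear maps commuting with $\rotoiso$.

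Next I use that $\bar{v}+\theta(\bar{v})$ commutes with $v+\theta(v)$, whose preimage is $(\tfrac1{r_0}X_v,\tfrac1{r_1}X_v)$ by Lemma \ref{setrolleye}. Working in each summand separately, $[X_v,X_w]=0$ in $\mathfrak{so}(3)$ forces $w\parallel v$. A linear map of $\mathbb{R}^2$ sending every vector to a multiple of itself is scalar. Therefore $\phi=a\,\mathrm{id}$ and $\phi'=b\,\mathrm{id}$ for constants $a,b$, giving
\[\rho_*^{-1}(\bar{v}+\theta(\bar{v}))=(aX_v,bX_v).\]

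\emph{Step 2 (magnitudes).} For $v=e_1$ and $w=e_2$, the bracket $[\bar{e}_1+\theta(\bar{e}_1),\bar{e}_2+\theta(\bar{e}_2)]$ lies in $\mathfrak{k}\cap\mathfrak{g}_0=\langle\rotoiso\rangle$. I would compute it explicitly as $c\rotoiso$ using the identification of $\mathfrak{g}_{-3}+\mathfrak{g}_0+\mathfrak{g}_3$ with $\mathfrak{sl}_3\mathbb{R}$, where this subalgebra becomes the standard $\mathfrak{so}(3)$; I expect $c=\pm1$. On the other side, the corresponding bracket is
\[(a^2[X_{e_1},X_{e_2}],\,b^2[X_{e_1},X_{e_2}])=\mp(a^2L,\,b^2L).\]
Comparing this with $c(L,L)$ gives $a^2=b^2=|c|=1$. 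Tracking the normalizations and signs in this bracket computation is the step I expect to be the main obstacle.

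\emph{Step 3 (relative sign).} The elements $v+\theta(v)$ and $\bar{w}+\theta(\bar{w})$ have components in grading degrees $\pm1$ and $\pm3$ respectively. Since $\kgf(\mathfrak{g}_i,\mathfrak{g}_j)=0$ unless $i+j=0$, they are $\kgf$-orthogonal. The restriction of $\kgf$ to $\mathfrak{k}$ is an $\Ad_K$-invariant form, and so is its pullback along $\rho_*$ to $\mathfrak{so}(3)\oplus\mathfrak{so}(3)$. That pullback must be of the form $\lambda_0\,\mathrm{tr}\oplus\lambda_1\,\mathrm{tr}$ on the two simple summands, with $\lambda_0,\lambda_1$ both nonzero and of the same sign because $\kgf$ is negative definite on $\mathfrak{k}$. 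Orthogonality then reads
\[\left(\lambda_0\tfrac{a}{r_0}+\lambda_1\tfrac{b}{r_1}\right)\mathrm{tr}(X_vX_v)=0.\]
Since $\tfrac{\lambda_0}{r_0}$ and $\tfrac{\lambda_1}{r_1}$ have the same sign and $a,b\in\{\pm1\}$, this forces $b=-a$. Hence $\rho_*^{-1}(\bar{v}+\theta(\bar{v}))=\pm(X_v,-X_v)$, which is the claimed formula.
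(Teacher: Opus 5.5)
Your proposal is correct and is essentially the paper's proof. The centralizer of $\rho_*^{-1}(v+\theta(v))$ gives the form $(aX_v,bX_v)$; your remark that a linear map sending every $v$ to a multiple of itself is scalar makes explicit that $a,b$ do not depend on $v$, which the paper uses tacitly. The bracket $[\bar{e_1}+\theta(\bar{e_1}),\bar{e_2}+\theta(\bar{e_2})]$ then gives $a^2=b^2=1$, and $\kgf$-orthogonality of the degree $\pm1$ and $\pm3$ parts, together with positivity of the coefficients on the two summands, forces $b=-a$.

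The computation you flagged comes out to exactly $\rotoiso$, since $[\bar{e_1},\theta(\bar{e_2})]+[\theta(\bar{e_1}),\bar{e_2}]=e_1^\vee\otimes e_2-e_2^\vee\otimes e_1$. On the other side $[X_{e_1},X_{e_2}]=L$ with a plus sign, so $c=1$ and there is no sign ambiguity.
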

\begin{proof}Since $\rho_*^{-1}(\bar{v}+\theta(\bar{v}))\in\mathrm{Z}_{\mathfrak{so}(3)\oplus\mathfrak{so}(3)}(\rho_*^{-1}(v+\theta(v)))$, we know that there must exist $x,y\in\mathbb{R}$ such that
\[\rho_*^{-1}(\bar{v}+\theta(\bar{v}))=\left(x\begin{smallbmatrix}0 & -v^\top \\ v & 0\end{smallbmatrix}\!,\,y\begin{smallbmatrix}0 & -v^\top \\ v & 0\end{smallbmatrix}\right).\]
On top of this, we also have that $[\bar{e_1}+\theta(\bar{e_1}),\bar{e_2}+\theta(\bar{e_2})]=\rotoiso$,
so
\begin{align*}\rho_*^{-1}(\rotoiso) & =[\rho_*^{-1}(\bar{e_1}+\theta(\bar{e_1})),\rho_*^{-1}(\bar{e_2}+\theta(\bar{e_2}))] \\ & =\left(x^2\begin{smallbmatrix}0 & 0 & 0 \\ 0 & 0 & -1 \\ 0 & 1 & 0\end{smallbmatrix}\!,\, y^2\begin{smallbmatrix}0 & 0 & 0 \\ 0 & 0 & -1 \\ 0 & 1 & 0\end{smallbmatrix}\right)\!,\end{align*}
hence $x^2=y^2=1$.

To find the signs of $x$ and $y$, note that the Killing form $\kgf$ of $\mathfrak{g}$ restricts to a negative-definite inner product on $\mathfrak{k}$. Because $\kgf$ is $\Ad_K$-invariant, the restriction of $\kgf$ to $\mathfrak{k}\approx\mathfrak{so}(3)\oplus\mathfrak{so}(3)$ must be a linear combination of the Killing forms on the $\mathfrak{so}(3)$ summands, and in order for this to be negative-definite, it must be a \textit{positive} linear combination of those (negative-definite) Killing forms. In particular, since $r_0,r_1>0$ and
\begin{align*}0 & =\kgf(e_1+\theta(e_1),\bar{e_1}+\theta(\bar{e_1})) \\ & =\kgf\!\left(\rho_*\!\left(\tfrac{1}{r_0}\!\begin{smallbmatrix}0 & -e_1^\top \\ e_1 & 0\end{smallbmatrix}\!,\,\tfrac{1}{r_1}\!\begin{smallbmatrix}0 & -e_1^\top \\ e_1 & 0\end{smallbmatrix}\right)\!,\,\rho_*\!\left(x\!\begin{smallbmatrix}0 & -e_1^\top \\ e_1 & 0\end{smallbmatrix}\!,\,y\!\begin{smallbmatrix}0 & -e_1^\top \\ e_1 & 0\end{smallbmatrix}\right)\right) \\ & =\tfrac{x}{r_0}\kgf\!\left(\rho_*\!\left(\begin{smallbmatrix}0 & -e_1^\top \\ e_1 & 0\end{smallbmatrix}\!,0\right)\!,\,\rho_*\!\left(\begin{smallbmatrix}0 & -e_1^\top \\ e_1 & 0\end{smallbmatrix}\!,0\right)\right) \\ & \quad +\tfrac{y}{r_1}\kgf\!\left(\rho_*\!\left(0,\begin{smallbmatrix}0 & -e_1^\top \\ e_1 & 0\end{smallbmatrix}\right)\!,\,\rho_*\!\left(0,\begin{smallbmatrix}0 & -e_1^\top \\ e_1 & 0\end{smallbmatrix}\right)\right)\!,\end{align*}
$x$ and $y$ must have opposite signs. Up to swapping the $\mathfrak{so}(3)$ summands, $\rho_*^{-1}(\bar{v}+\theta(\bar{v}))$ must therefore take the form $([\begin{smallmatrix}0 & -v^\top \\ v & 0\end{smallmatrix}],-[\begin{smallmatrix}0 & -v^\top \\ v & 0\end{smallmatrix}])$.\mbox{\qedhere}\end{proof}

\subsection{The two intersections}\label{intersecttwice}\hfill \\
Consider the one-parameter subgroups $\exp(\mathbb{R}(e_1+\theta(e_1)))$ and \[\exp(\mathbb{R}(\bar{e_1}+\theta(\bar{e_1})))=\exp(\mathbb{R}(e_1\wedge e_2\otimes e_2+\theta(e_1\wedge e_2\otimes e_2)))\]
in $K<\Zhe_2$. These are the one-parameter subgroups corresponding to the root pairs $\pm\varepsilon_2$ and $\pm(\varepsilon_1-\varepsilon_3)$, respectively, and together they generate a maximal torus in $K$. Using the root diagram, we can see exactly how many times these one-parameter subgroups intersect.

\begin{lemma}\label{2intersections} The images of the one-parameter subgroups generated by $e_1+\theta(e_1)$ and $e_1\wedge e_2\otimes e_2+\theta(e_1\wedge e_2\otimes e_2)$ in $K$ intersect exactly twice.\end{lemma}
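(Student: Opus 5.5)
The plan is to turn the statement into a lattice computation on the maximal torus $T\le K$ generated by $X_s:=e_1+\theta(e_1)$ and $X_\ell:=e_1\wedge e_2\otimes e_2+\theta(e_1\wedge e_2\otimes e_2)$. These belong to the perpendicular root pairs $\pm\varepsilon_2$ (short) and $\pm(\varepsilon_1-\varepsilon_3)$ (long), and they commute, as in the proof that $K$ has a $2$-dimensional maximal torus. Since $\Zhe_2$ is centerless and $\Ad:\Zhe_2\to\Aut(\mathfrak{g})$ is an isomorphism, an element $\exp(sX_s+tX_\ell)$ is the identity exactly when $\exp(\ad_{sX_s+tX_\ell})=\mathrm{id}_\mathfrak{g}$. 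So everything reduces to the simultaneous eigenvalues of $\ad_{X_s}$ and $\ad_{X_\ell}$ on $\mathbb{C}\otimes\mathfrak{g}$.

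\textbf{Step 1: the eigenvalues of $\ad_{X_s}$ and $\ad_{X_\ell}$.} For each root $\nu$, the elements $\eta_\nu$, $\theta(\eta_\nu)$ and $[\eta_\nu,\theta(\eta_\nu)]\in\mathfrak{a}$ span a copy of $\mathfrak{sl}_2\mathbb{R}$. In it, $\eta_\nu+\theta(\eta_\nu)$ is, after rescaling, a compact generator conjugate over $\mathbb{C}$ to $i$ times the coroot element $H_\nu$. Because the two $\mathfrak{sl}_2$'s for $\nu_s=\varepsilon_2$ and $\nu_\ell=\varepsilon_1-\varepsilon_3$ commute, a single Cayley transform $\exp\!\big(\tfrac{\pi}{4}\ad(\eta_{\nu_s}-\theta\eta_{\nu_s})\big)\exp\!\big(\tfrac{\pi}{4}\ad(\eta_{\nu_\ell}-\theta\eta_{\nu_\ell})\big)$ (suitably normalized) conjugates $\langle X_s,X_\ell\rangle$ into $i\,\mathfrak{a}$ inside $\mathbb{C}\otimes\mathfrak{g}$. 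It sends $X_s\mapsto iH_{\nu_s}$ and $X_\ell\mapsto iH_{\nu_\ell}$ once $X_s,X_\ell$ are scaled so that each has period $2\pi$ inside its own $\mathrm{SL}_2$-image. (Rescaling only reparametrizes the one-parameter subgroups, so it does not affect their images.) Consequently the joint eigenvalues of $(\ad_{X_s},\ad_{X_\ell})$ on $\mathbb{C}\otimes\mathfrak{g}$ are $i\big(\langle\mu,\nu_s^\vee\rangle,\langle\mu,\nu_\ell^\vee\rangle\big)$ as $\mu$ ranges over $\{0\}$ and the roots.

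\textbf{Step 2: read the pairings off the root diagram.} Using Figure \ref{perproots}, I will tabulate the pairs $(a,b)=(\langle\mu,\nu_s^\vee\rangle,\langle\mu,\nu_\ell^\vee\rangle)$:
\begin{itemize}
\item $\mu=\pm\nu_s$ gives $(\pm2,0)$;
\item $\mu=\pm\nu_\ell$ gives $(0,\pm2)$;
\item the short roots at $60^\circ$ from $\nu_s$ give $(\pm1,\pm1)$;
\item the long roots at $30^\circ$ from $\nu_s$ give $(\pm3,\pm1)$.
\end{itemize}
Since every pair has $a\equiv b\pmod 2$ and both $(1,1)$ and $(2,0)$ occur, the character lattice is $L=\{(a,b)\in\mathbb{Z}^2:a\equiv b \bmod 2\}$. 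Hence $\exp(sX_s+tX_\ell)=e$ if and only if $(s,t)\in2\pi L^*$, where
\[L^*=\{(x,y):x+y\in\mathbb{Z},\ 2x\in\mathbb{Z}\}.\]

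\textbf{Step 3: count the intersection.} With this normalization, both one-parameter subgroups have period $2\pi$, since $(x,0)\in L^*$ iff $x\in\mathbb{Z}$, and similarly for $(0,y)$. A common point $\exp(2\pi xX_s)=\exp(2\pi yX_\ell)$ with $x,y\in[0,1)$ requires $(x,-y)\in L^*$. That forces $x\in\{0,\tfrac12\}$ and then $y=x$. So the intersection consists of exactly two elements, namely $e$ and $\exp(\pi X_s)=\exp(\pi X_\ell)$. I expect the latter to be $\hat\theta$, though that identification is not needed for the lemma.

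\textbf{Main obstacle.} The delicate part is Step 1. I need to justify carefully that the simultaneous Cayley transform exists, which uses commutativity of the two $\mathfrak{sl}_2$-triples: by the lattice observation, $\pm\nu_s\pm\nu_\ell$ are not roots. I also need the normalizations to be consistent, so that the eigenvalues really are $i$ times the integer coroot pairings. If the Cayley-transform bookkeeping becomes messy, the fallback is a direct computation from the bracket table of Definition \ref{tanakaprolongation}. One decomposes $\mathfrak{g}$ into the joint weight spaces of the commuting operators $\ad_{X_s}$ and $\ad_{X_\ell}$, each block being at most $2$-dimensional for a root pair $\pm\mu$, and checks the rotation speeds $(a,b)$ by hand.
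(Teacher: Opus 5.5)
Your argument is correct, but it takes a different route from the paper.

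\textbf{How the two proofs differ.} The paper never computes the kernel of $\exp$ on the whole torus. It decomposes $\mathfrak{g}$ separately into irreducibles for $\mathfrak{sl}_2\mathbb{R}_\text{short}$ and for $\mathfrak{sl}_2\mathbb{R}_\text{long}$. An element of $\SOrth(2)_\text{short}\cap\SOrth(2)_\text{long}$ must preserve every intersection of a short-irreducible with a long-irreducible, and each such intersection is a single root space or a subspace of $\mathfrak{a}$. So the element can only be $\pm\mathds{1}$ in each copy of $\SOrth(2)$. The paper then shows $-\mathds{1}_\text{short}=-\mathds{1}_\text{long}$ by checking that the parity labels of the irreducibles containing each root space agree. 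You instead compute the joint spectrum $i(a,b)$ of $(\ad_{X_s},\ad_{X_\ell})$ and obtain $\ker(\exp|_{\mathfrak{t}})=2\pi L^*$, after which the count is mechanical. The core fact is the same: the paper's label of a root $\mu$ is $(-1)^a$ for $-\mathds{1}_\text{short}$ and $(-1)^b$ for $-\mathds{1}_\text{long}$, so ``the labels agree'' is exactly your congruence $a\equiv b\pmod 2$. The paper's version stays inside real $\mathfrak{sl}_2$-representation theory and can be read off the root diagram. Yours costs a complexification and a simultaneous Cayley transform, but it yields the full description $T\cong\mathfrak{t}/2\pi L^*$, which is essentially the torus picture the paper needs next in Corollary \ref{ratioappears}.

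\textbf{Two small repairs.} First, the Cayley transform must be complex, since no real automorphism carries $X_s\in\mathfrak{g}$ into $i\mathfrak{a}$. The correct element is $\exp\bigl(\tfrac{i\pi}{4}\ad(\eta_\nu-\theta\eta_\nu)\bigr)$. You can also avoid it by decomposing $\mathfrak{g}$ under the commuting pair $\mathfrak{sl}_2\mathbb{R}_\text{short}\oplus\mathfrak{sl}_2\mathbb{R}_\text{long}$ as $(V_2\boxtimes V_0)\oplus(V_0\boxtimes V_2)\oplus(V_3\boxtimes V_1)$, which gives the same joint spectrum. No rescaling is needed: $(e_1,e_1^\vee,\mathrm{diag}(2,-1))$ and $(\bar{e_1},e_1^\vee\wedge e_2^\vee\otimes e_2^\vee,\mathrm{diag}(0,1))$ are already standard triples, and $X_s$, $X_\ell$ are of the form $E-F$ for them.

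Second, your side guess that $\exp(\pi X_s)=\hat\theta$ is false, though nothing depends on it. The element $\exp(\pi X_s)$ fixes $H_s$, being central in $\SLin_2\mathbb{R}_\text{short}$. It also fixes $\ker\varepsilon_2\cap\mathfrak{a}$, which centralizes $\mathfrak{sl}_2\mathbb{R}_\text{short}$, so it fixes all of $\mathfrak{a}$. But $\theta=-\mathrm{id}$ on $\mathfrak{a}$. In your coordinates, the central involution $\hat\theta$ is instead one of the other two involutions $\exp\bigl(\tfrac{\pi}{2}(X_s\pm X_\ell)\bigr)$ of $T$.
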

\begin{proof}From the root diagram on the left in Figure \ref{short&longirreps}, we can see that the copy of $\mathfrak{sl}_2\mathbb{R}$ generated by $e_1\in\mathfrak{g}_{\varepsilon_2}$ and $e_1^\vee=-\theta(e_1)\in\mathfrak{g}_{-\varepsilon_2}$, which we will call $\mathfrak{sl}_2\mathbb{R}_\text{short}$, breaks $\mathfrak{g}$ into irreducible subrepresentations as
\begin{gather*}\mathfrak{g}_{\varepsilon_3-\varepsilon_1}=\langle e_1\wedge e_2\otimes e_2\rangle, \\
\mathfrak{g}_{\varepsilon_2-\varepsilon_1}+\mathfrak{g}_{-\varepsilon_1}+\mathfrak{g}_{\varepsilon_3}+\mathfrak{g}_{\varepsilon_3-\varepsilon_2}, \\
\mathfrak{sl}_2\mathbb{R}_\text{short},\, \ker(\varepsilon_2)=\{Z\in\mathfrak{a}:\varepsilon_2(Z)=0\}, \\
\mathfrak{g}_{\varepsilon_2-\varepsilon_3}+\mathfrak{g}_{-\varepsilon_3}+\mathfrak{g}_{\varepsilon_1}+\mathfrak{g}_{\varepsilon_1-\varepsilon_2}, \\
\text{and }\mathfrak{g}_{\varepsilon_1-\varepsilon_3}=\langle e_1^\vee\wedge e_2^\vee\otimes e_2^\vee\rangle.\end{gather*}
Similarly, from the root diagram on the right in Figure \ref{short&longirreps}, we can see that the copy of $\mathfrak{sl}_2\mathbb{R}$ generated by $e_1\wedge e_2\otimes e_2=\bar{e_1}\in\mathfrak{g}_{\varepsilon_3-\varepsilon_1}$ and $e_1^\vee\wedge e_2^\vee\otimes e_2^\vee=-\theta(\bar{e_1})\in\mathfrak{g}_{\varepsilon_1-\varepsilon_3}$, which we will call $\mathfrak{sl}_2\mathbb{R}_\text{long}$, breaks $\mathfrak{g}$ into irreducible subrepresentations as
\begin{gather*}\mathfrak{g}_{\varepsilon_2-\varepsilon_1}+\mathfrak{g}_{\varepsilon_2-\varepsilon_3}, \\
\mathfrak{g}_{\varepsilon_2}=\langle e_1\rangle, \\
\mathfrak{g}_{-\varepsilon_1}+\mathfrak{g}_{\varepsilon_3}, \\
\mathfrak{sl}_2\mathbb{R}_\text{long},\, \ker(\varepsilon_1-\varepsilon_3)=\{Z\in\mathfrak{a}:(\varepsilon_1-\varepsilon_3)(Z)=0\}, \\
\mathfrak{g}_{-\varepsilon_3}+\mathfrak{g}_{\varepsilon_1}, \\
\mathfrak{g}_{-\varepsilon_2}=\langle e_1^\vee\rangle, \\
\text{and }\mathfrak{g}_{\varepsilon_3-\varepsilon_2}+\mathfrak{g}_{\varepsilon_1-\varepsilon_2}.\end{gather*}

\begin{figure}
\centering\includegraphics[width=0.44\textwidth]{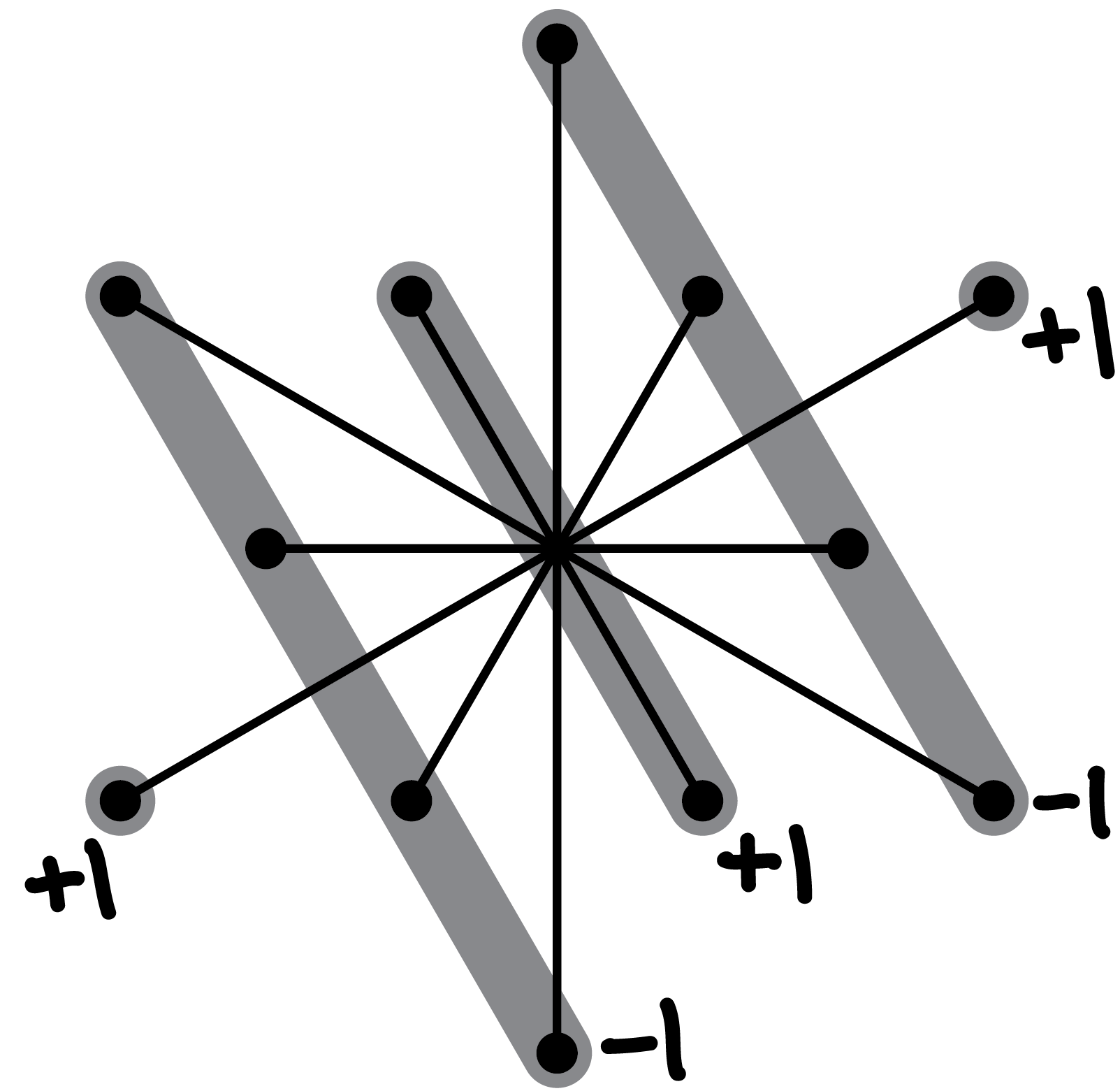}\hfill\includegraphics[width=0.44\textwidth]{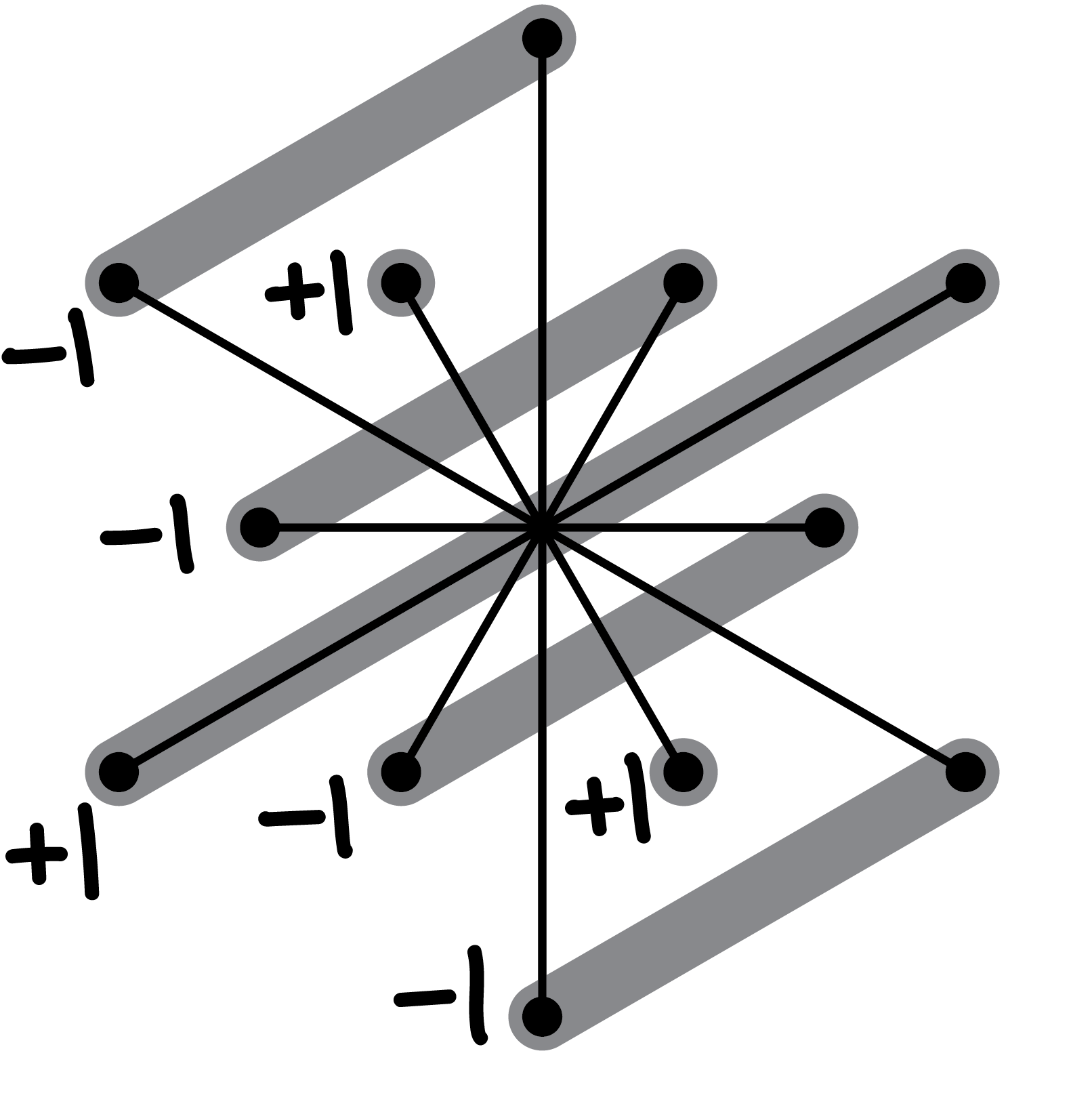}
\caption{A pair of root diagrams for $\mathfrak{g}$, with the roots differing by integer multiples of $\varepsilon_2$ (left) or $\varepsilon_3-\varepsilon_1$ (right) grouped together to indicate subrepresentations of $\mathfrak{g}$ with respect to $\mathfrak{sl}_2\mathbb{R}_\text{short}$ (left) and $\mathfrak{sl}_2\mathbb{R}_\text{long}$ (right), where each subrepresentation is labeled by either $+1$ or $-1$ according to whether it has odd or even dimension, respectively}
\label{short&longirreps}
\end{figure}

Denote by $\SLin_2\mathbb{R}_\text{short}$ and $\SLin_2\mathbb{R}_\text{long}$ the connected subgroups of $\Zhe_2$ generated by $\mathfrak{sl}_2\mathbb{R}_\text{short}$ and $\mathfrak{sl}_2\mathbb{R}_\text{long}$, respectively. Since $\Zhe_2$ is centerless by definition, so that $\Ad:\Zhe_2\to\Aut(\mathfrak{g})$ is a Lie group isomorphism, two elements of $\Zhe_2$ are the same if and only if their adjoint actions on $\mathfrak{g}$ are the same. Therefore, since $\SLin_2\mathbb{R}$ acts faithfully on each of its (nontrivial) even-dimensional irreducible representations and acts with kernel exactly $\langle-\mathds{1}\rangle<\SLin_2\mathbb{R}$ on each of its (nontrivial) odd-dimensional irreducible representations, both the subgroups $\SLin_2\mathbb{R}_\text{short}$ and $\SLin_2\mathbb{R}_\text{long}$ must be isomorphic to $\SLin_2\mathbb{R}$. Within these subgroups, we may identify
\[\SOrth(2)_\text{short}:=\exp(\mathbb{R}(e_1+\theta(e_1)))<\SLin_2\mathbb{R}_\text{short}\]
and
\[\SOrth(2)_\text{long}:=\exp(\mathbb{R}(\bar{e_1}+\theta(\bar{e_1})))<\SLin_2\mathbb{R}_\text{long}.\]

To see where these two copies of $\SOrth(2)$ intersect, note that each of the irreducible subrepresentations of $\mathfrak{g}$ with respect to $\SLin_2\mathbb{R}_\text{short}$ intersects the irreducible subrepresentations with respect to $\SLin_2\mathbb{R}_\text{long}$ in either a single root space or a subspace of $\mathfrak{a}$. Elements of $\SOrth(2)_\text{short}\cap\SOrth(2)_\text{long}$ must preserve each of these intersections, so all such elements must act by scalar multiplication on the root spaces and $\mathfrak{a}$. Given how $\SOrth(2)$ acts on these representations, the only nontrivial element of $\SOrth(2)_\text{short}$ that could possibly be contained in $\SOrth(2)_\text{long}$ is the element \[-\mathds{1}_\text{short}:=\exp(\pi(e_1+\theta(e_1)))\] corresponding to $-\mathds{1}=\exp(\pi\rotoiso)\in\SOrth(2)$, and likewise, the only nontrivial element of $\SOrth(2)_\text{long}$ that could be in the intersection is
\[-\mathds{1}_\text{long}:=\exp(\pi(\bar{e_1}+\theta(\bar{e_1}))).\]
Thus, to prove that we have exactly two intersections, it just remains to show that $-\mathds{1}_\text{short}=-\mathds{1}_\text{long}$.

Conveniently, this too is visible directly from the root diagram of $\mathfrak{g}$: because the element $-\mathds{1}\in\SOrth(2)$ acts by $-1$ on the even-dimensional irreducible representations and by $1$ on the odd-dimensional irreducible representations, we may label each root of the left root diagram in Figure \ref{short&longirreps} by $\pm1$ according to whether its root space is contained in an irreducible representation of odd or even dimension for $\SLin_2\mathbb{R}_\text{short}$. This tells us how $\Ad_{-\mathds{1}_\text{short}}$ acts on each root space, and doing the same thing for $-\mathds{1}_\text{long}$ in the right root diagram of Figure \ref{short&longirreps} gives the same result for each root,
so $\Ad_{-\mathds{1}_\text{short}}\!=\Ad_{-\mathds{1}_\text{long}}$ and hence $-\mathds{1}_\text{short}=-\mathds{1}_\text{long}$.\mbox{\qedhere}

\end{proof}

In Lemma \ref{setrolleye} above, we proved that $\rho_*^{-1}(e_1+\theta(e_1))$ can be chosen to be of the form $\big(\tfrac{1}{r_0}\big[\begin{smallmatrix}0 & -e_1^\top \\ e_1 & 0\end{smallmatrix}\big],\tfrac{1}{r_1}\big[\begin{smallmatrix}0 & -e_1^\top \\ e_1 & 0\end{smallmatrix}\big]\big)$ for \textit{some} positive real numbers $r_0$ and $r_1$, but we did not specify $r_0$ and $r_1$ further than this. Now, using Lemma \ref{2intersections}, we can see what they must be.

\begin{corollary}\label{ratioappears} In the setting of Lemma \ref{setrolleye}, either $r_0=1$ and $r_1=\tfrac{1}{3}$ or $r_0=\tfrac{1}{3}$ and $r_1=1$.\end{corollary}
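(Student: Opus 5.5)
The plan is to convert everything to explicit computations inside $\Spin(3)\times\Spin(3)$, using the normal forms from Lemmas \ref{setrotoiso}, \ref{setrolleye} and \ref{setslideeye}. Write $J:=\begin{smallbmatrix}0 & -e_1^\top \\ e_1 & 0\end{smallbmatrix}\in\mathfrak{so}(3)$, and set $n_0:=\tfrac{1}{r_0}$, $n_1:=\tfrac{1}{r_1}$. Then $\rho_*^{-1}(e_1+\theta(e_1))=(n_0J,n_1J)$ and $\rho_*^{-1}(\bar{e_1}+\theta(\bar{e_1}))=\pm(J,-J)$. I would derive two independent conditions on $(n_0,n_1)$: an algebraic one from the Killing form, and an arithmetic one from Lemma \ref{2intersections}.

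\emph{Killing form step.} As in the proof of Lemma \ref{setslideeye}, $\kgf|_{\mathfrak{k}}$ pulls back to $c_0\kgf_{\mathfrak{so}(3)}\oplus c_1\kgf_{\mathfrak{so}(3)}$ for some $c_0,c_1>0$. Every rotation generator of unit speed has the same $\kgf_{\mathfrak{so}(3)}$-norm, say $-\kappa$. The orthogonality $\kgf(e_1+\theta(e_1),\bar{e_1}+\theta(\bar{e_1}))=0$ gives $c_0n_0=c_1n_1$. Next I compare norms using the formulas from Section \ref{structurestuff}:
\[\kgf(e_1+\theta(e_1),e_1+\theta(e_1))=2\kgf(e_1,-e_1^\vee)=-48,\qquad \kgf(\bar{e_1}+\theta(\bar{e_1}),\bar{e_1}+\theta(\bar{e_1}))=-16.\]
The first pulls back to $-\kappa(c_0n_0^2+c_1n_1^2)$ and the second to $-\kappa(c_0+c_1)$, so $c_0n_0^2+c_1n_1^2=3(c_0+c_1)$. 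As a consistency check, $\kgf(\rotoiso,\rotoiso)=-16$ also matches $-\kappa(c_0+c_1)$, since $\rho_*^{-1}(\rotoiso)$ is a unit generator in each factor. Substituting $c_0=\lambda n_1$ and $c_1=\lambda n_0$ yields $n_0n_1(n_0+n_1)=3(n_0+n_1)$, hence $n_0n_1=3$. This is where the number $3$ enters.

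\emph{Intersection step.} By the Corollary that $\rho$ is a double cover, $\ker\rho=\{(1,1),(-1,-1)\}$. In $\Spin(3)$, $\exp(tJ)$ has period $4\pi$ and equals $-1$ at $t=2\pi$. In $K$, $\SOrth(2)_\text{short}=\exp(\mathbb{R}(e_1+\theta(e_1)))$ is a circle whose parameter has period $2\pi$, with $-\mathds{1}_\text{short}$ at $\pi$; the same holds for $\SOrth(2)_\text{long}$, since both sit in copies of $\SLin_2\mathbb{R}$ as standard $\SOrth(2)$'s. So I need $(\exp(2\pi n_0J),\exp(2\pi n_1J))\in\ker\rho$ with $2\pi$ minimal. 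This forces $n_0,n_1\in\mathbb{Z}_{>0}$ of the same parity, unless both are half-integers, which I would rule out by minimality. Combined with $n_0n_1=3$, the only options are $\{n_0,n_1\}=\{1,3\}$. I would then confirm that Lemma \ref{2intersections} is realized: $-\mathds{1}_\text{short}=\rho(\exp(\pi n_0J),\exp(\pi n_1J))$ should equal $-\mathds{1}_\text{long}=\rho(\exp(\pi J),\exp(-\pi J))$ modulo $\ker\rho$, which holds when $n_0,n_1$ are odd. This gives $r_0=1, r_1=\tfrac13$ or the reverse.

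The main obstacle is the intersection/period bookkeeping. One must argue carefully that the $K$-period of the short subgroup is exactly $2\pi$, using $\exp(2\pi(e_1+\theta(e_1)))=e$ and $\exp(\pi(e_1+\theta(e_1)))\ne e$ from the $\SLin_2\mathbb{R}_\text{short}$ representation analysis. Without this, non-integral ratios cannot be excluded, and the Killing-form equation alone only fixes the product $n_0n_1$. If the parity argument gets messy, my first fallback is to count intersections directly: solve $(\exp(tn_0J),\exp(tn_1J))=\pm(\exp(sJ),\exp(-sJ))$ for $t\in[0,2\pi)$, $s\in[0,2\pi)$, and demand exactly two solutions as Lemma \ref{2intersections} requires.
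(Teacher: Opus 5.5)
Your proof is correct, but it takes a different route from the paper's. The paper uses no metric information. It works in the torus $\rho(T)\cong\mathbb{R}^2/(2\pi\mathbb{Z}(1,-1)+2\pi\mathbb{Z}(1,1))$ and uses Lemma \ref{2intersections} in full. In effect, the coincidence $-\mathds{1}_\text{short}=-\mathds{1}_\text{long}$ forces $\pi(n_0,n_1)\equiv\pi(1,-1)$ modulo the lattice, so $n_0,n_1$ are odd integers. The absence of any other intersection for parameters in $(0,\pi)$ forces the short line to reach the first translate $x+y=4\pi$ of the long circle exactly at $t=\pi$, so $n_0+n_1=4$ (this is what Figure \ref{slopecheckfig} depicts).

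You instead get $n_0n_1=3$ from the Killing form. There the $3$ is the ratio $48:16$ of the norms of the short and long compact generators, reflecting the $3:1$ ratio of squared long and short root lengths in $\mathrm{G}_2$. After that you only need integrality, which follows from $\exp(2\pi(e_1+\theta(e_1)))=e$ (the standard $\SOrth(2)$ of $\SLin_2\mathbb{R}_\text{short}$) together with $\ker\rho\subseteq\mathrm{Z}(\Spin(3)\times\Spin(3))$.

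Your intersection step can be trimmed considerably. Once $n_0n_1=3$ is known, you do not need minimality of the period $2\pi$, the parity condition, or even the corollary that $\rho$ is a double cover. The half-integer case cannot arise at all, since $\exp(\pi J)$ is not central in $\Spin(3)$. Lemma \ref{2intersections} then serves only as a consistency check in your version.

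What each approach buys: yours is shorter and purely algebraic, but it locates the $3$ in Killing-form normalizations, which is close in spirit to the structure-constant arguments the paper sets out to avoid. The paper's version derives the ratio from counting intersections of the rolling and sliding orbits, which is exactly the kinematic explanation it is after.
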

\begin{proof}Consider the maximal torus
\[T=\exp\left(\mathbb{R}\begin{smallbmatrix}0 & -1 & 0 \\ 1 & 0 & 0 \\ 0 & 0 & 0\end{smallbmatrix}\right)\times\exp\left(\mathbb{R}\begin{smallbmatrix}0 & -1 & 0 \\ 1 & 0 & 0 \\ 0 & 0 & 0\end{smallbmatrix}\right)<\Spin(3)\times\Spin(3),\]
which is isomorphic to $\mathbb{R}^2/4\pi\mathbb{Z}^2$ by
\[\exp\left(x\!\begin{smallbmatrix}0 & -1 & 0 \\ 1 & 0 & 0 \\ 0 & 0 & 0\end{smallbmatrix}\!,\, y\!\begin{smallbmatrix}0 & -1 & 0 \\ 1 & 0 & 0 \\ 0 & 0 & 0\end{smallbmatrix}\right)\mapsto\begin{smallbmatrix}x \\ y\end{smallbmatrix}+4\pi\mathbb{Z}^2.\]
This maximal torus $T$ is the same one generated by $\rho_*^{-1}(e_1+\theta(e_1))$ and $\rho_*^{-1}(\bar{e_1}+\theta(\bar{e_1}))$. Under the isomorphism $T\simeq\mathbb{R}^2/4\pi\mathbb{Z}^2$, Lemma \ref{setslideeye} tells us that $\rho_*^{-1}(\bar{e_1}+\theta(\bar{e_1}))\in\mathfrak{t}<\mathfrak{k}$ corresponds to the vector $\pm\begin{smallbmatrix}1 \\ -1\end{smallbmatrix}$, which generates a one-parameter subgroup of slope $\frac{-1}{1}=-1$ in $\mathbb{R}^2/4\pi\mathbb{Z}^2$. Similarly, by Lemma \ref{setrolleye}, there exist positive real numbers $r_0$ and $r_1$ such that the element $\rho_*^{-1}(e_1+\theta(e_1))$ corresponds to the vector $[\begin{smallmatrix}1/r_0 \\ 1/r_1\end{smallmatrix}]$, which generates a one-parameter subgroup of (positive) slope $\tfrac{1/r_1}{1/r_0}=\tfrac{r_0}{r_1}$.

Now, consider the torus $\rho(T)<K$, with corresponding isomorphism
\[\rho(T)\simeq\mathbb{R}^2/(4\pi\mathbb{Z}^2+2\pi\mathbb{Z}\begin{smallbmatrix}1 \\ 1\end{smallbmatrix})=\mathbb{R}^2/(2\pi\mathbb{Z}\begin{smallbmatrix}1 \\ -1\end{smallbmatrix}+2\pi\mathbb{Z}\begin{smallbmatrix}1 \\ 1\end{smallbmatrix}).\]
Because the subgroups $\SOrth(2)_\text{short}$ and $\SOrth(2)_\text{long}$ intersect exactly twice, the corresponding one-parameter subgroups in $\mathbb{R}^2/(4\pi\mathbb{Z}^2+2\pi\mathbb{Z}\begin{smallbmatrix}1 \\ 1\end{smallbmatrix})$ must intersect exactly twice as well: once at the identity and once at
\[\pi\begin{smallbmatrix}1 \\ -1\end{smallbmatrix}+(2\pi\mathbb{Z}\begin{smallbmatrix}1 \\ -1\end{smallbmatrix}+2\pi\mathbb{Z}\begin{smallbmatrix}1 \\ 1\end{smallbmatrix})=\pi[\begin{smallmatrix}1/r_0 \\ 1/r_1\end{smallmatrix}]+(2\pi\mathbb{Z}\begin{smallbmatrix}1 \\ -1\end{smallbmatrix}+2\pi\mathbb{Z}\begin{smallbmatrix}1 \\ 1\end{smallbmatrix}).\]
However, since the slope $\tfrac{r_0}{r_1}$ is positive and $\SOrth(2)_\text{short}$ cannot intersect $\SOrth(2)_\text{long}$ more than once, we can see in Figure \ref{slopecheckfig} that we must have either $\tfrac{r_0}{r_1}=\tfrac{1}{3}$, so that $\pi[\begin{smallmatrix}1/r_0 \\ 1/r_1\end{smallmatrix}]=\pi\begin{smallbmatrix}1 \\ -1\end{smallbmatrix}+2\pi\begin{smallbmatrix}1 \\ 1\end{smallbmatrix}=\pi\begin{smallbmatrix}3 \\ 1\end{smallbmatrix}$, or $\tfrac{r_0}{r_1}=3$, so that $\pi[\begin{smallmatrix}1/r_0 \\ 1/r_1\end{smallmatrix}]=\pi\begin{smallbmatrix}1 \\ -1\end{smallbmatrix}+2\pi\begin{smallbmatrix}1 \\ 1\end{smallbmatrix}-2\pi\begin{smallbmatrix}1 \\ -1\end{smallbmatrix}=\pi\begin{smallbmatrix}1 \\ 3\end{smallbmatrix}$.\mbox{\qedhere}

\begin{figure}
\centering\includegraphics[width=0.6\textwidth]{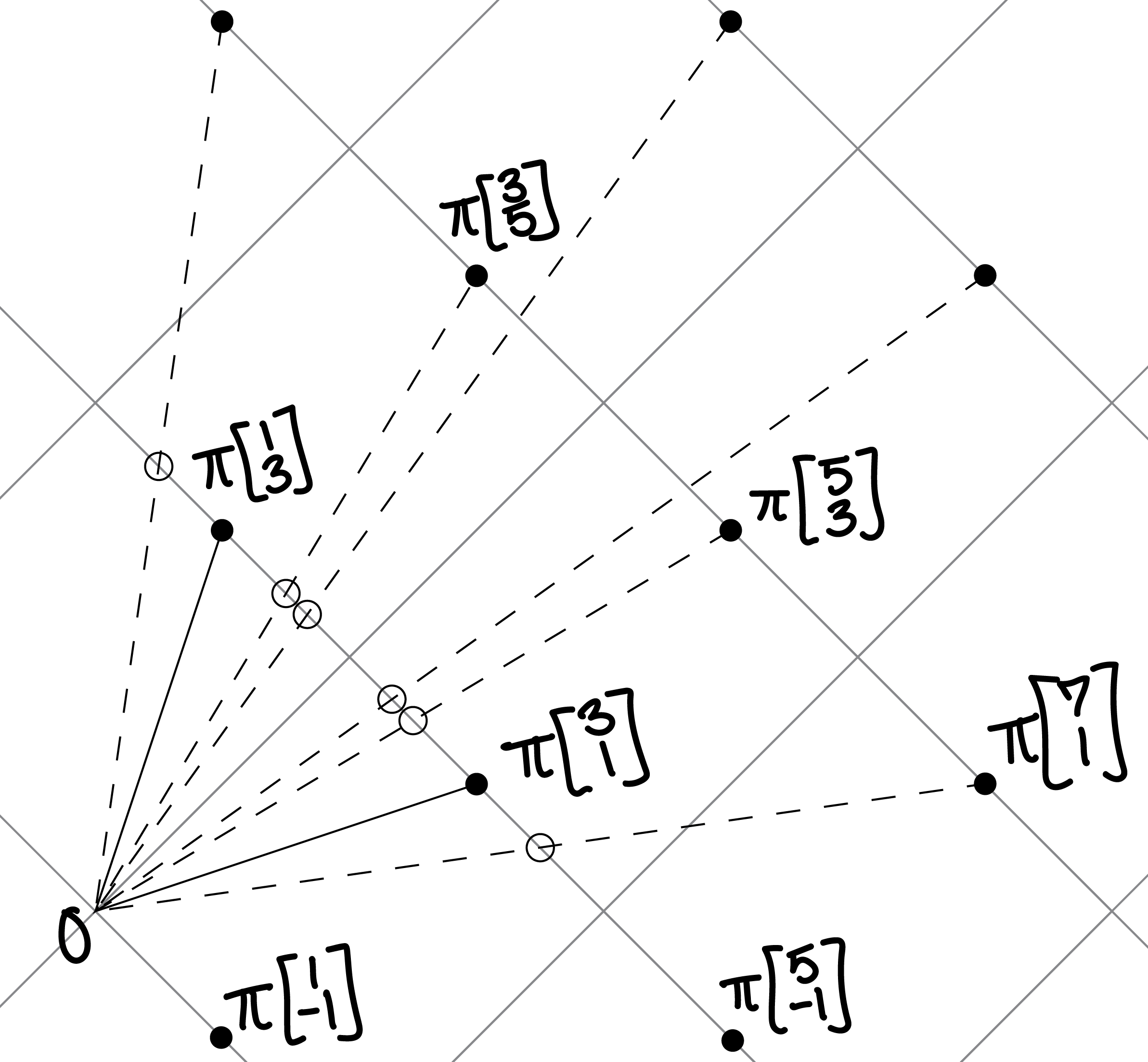}
\caption{A depiction of the universal cover of the torus $\mathbb{R}^2/(2\pi\mathbb{Z}\begin{smallbmatrix}1 \\ -1\end{smallbmatrix}+2\pi\mathbb{Z}\begin{smallbmatrix}1 \\ 1\end{smallbmatrix})$, with various line segments of positive slopes from the zero vector to the lifts of the point $\pi\begin{smallbmatrix}1 \\ -1\end{smallbmatrix}+(2\pi\mathbb{Z}\begin{smallbmatrix}1 \\ -1\end{smallbmatrix}+2\pi\mathbb{Z}\begin{smallbmatrix}1 \\ 1\end{smallbmatrix})$ indicated; dotted line segments are those that intersect $2\pi\begin{smallbmatrix}1 \\ 1\end{smallbmatrix}+\mathbb{R}\begin{smallbmatrix}1 \\ -1\end{smallbmatrix}$ before reaching their endpoint, with that intersection marked by a circle}
\label{slopecheckfig}
\end{figure}
\end{proof}

\subsection{Comparing \texorpdfstring{$(\Zhe_2,P)$}{(Zhe2,P)} with the geometry of rolling spheres}\label{comparison} \hfill \\
The importance of $K$ here comes from its transitive action on $\Zhe_2/P$.

\begin{lemma}\label{loclikerolling} The maximal compact subgroup $K<\Zhe_2$ acts transitively on $\Zhe_2/P$ with stabilizer $K\cap P\simeq\Orth(2)$.\end{lemma}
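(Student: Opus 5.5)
Transitivity will come from an Iwasawa-type argument at the Lie algebra level, and the stabilizer from how $\theta$ interacts with the grading.

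\textbf{Step 1: open orbit.} I will show $\mathfrak{k}+\mathfrak{p}=\mathfrak{g}$. For $X\in\mathfrak{g}_{-i}$ with $i>0$, write $X=(X+\theta(X))-\theta(X)$. Here $X+\theta(X)\in\mathfrak{k}$, and $\theta(X)\in\mathfrak{g}_i\subseteq\mathfrak{p}$ because $\theta(\mathfrak{g}_{-i})=\mathfrak{g}_i$. Hence $\mathfrak{g}_-\subseteq\mathfrak{k}+\mathfrak{p}$, so $\mathfrak{k}+\mathfrak{p}=\mathfrak{g}$. The infinitesimal action of $\mathfrak{k}$ at $\quot{P}(e)$ therefore surjects onto $\mathfrak{g}/\mathfrak{p}\cong T_{\quot{P}(e)}(\Zhe_2/P)$. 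The same holds at every point $\quot{P}(g)$, because $\Ad_g\mathfrak{p}$ is again parabolic. The cleanest way to see this is to use the Iwasawa decomposition $G=KAN$ with $AN\leq P$: $A=\exp(\mathfrak{a})\leq G_0$, and $N$ is the group of a positive system of roots contained in $\mathfrak{p}$, for instance those of positive grading together with $\mathfrak{g}_{\varepsilon_2-\varepsilon_3}$. This gives $G=KAN\subseteq KP$, so $K$ acts transitively on $\Zhe_2/P$.

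\textbf{Step 2: transitivity without Iwasawa.} If I avoid the Iwasawa decomposition, I argue instead that every $K$-orbit is open, since $\mathfrak{k}+\Ad_g\mathfrak{p}=\mathfrak{g}$ for all $g$. To justify this, note that $\Ad_g\mathfrak{p}$ contains $\Ad_{k}\mathfrak{a}'$-type split tori. More concretely, every parabolic conjugate contains a $\theta$-stable maximally noncompact Cartan subalgebra after conjugation by $K$. Every orbit is also compact, since $K$ is compact, so every orbit is closed. Connectedness of $\Zhe_2/P$ then gives a single orbit. I expect this to be the main obstacle: the Iwasawa route needs $AN\leq P$, which I will check directly from the root description in Section \ref{structurestuff}.

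\textbf{Step 3: the stabilizer.} The stabilizer of $\quot{P}(e)$ is $K\cap P$. Its Lie algebra is $\mathfrak{k}\cap\mathfrak{p}$. An element $X\in\mathfrak{p}$ with $\theta(X)=X$ has $\theta(X)\in\mathfrak{g}_0+\mathfrak{g}_{-1}+\mathfrak{g}_{-2}+\mathfrak{g}_{-3}$, which forces $X\in\mathfrak{g}_0$. Thus $\mathfrak{k}\cap\mathfrak{p}=\{R\in\mathfrak{gl}_2\mathbb{R}:-R^\top=R\}=\langle\rotoiso\rangle$. As a dimension check, $\dim K-\dim(K\cap P)=6-1=5=\dim \Zhe_2/P$.

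For the group, I will show $K\cap P\subseteq G_0$. An element $k\in K\cap P$ satisfies $\Ad_k\theta=\theta\Ad_k$, so $\Ad_k$ preserves both $\mathfrak{p}$ and $\theta(\mathfrak{p})=\mathfrak{g}_0+\mathfrak{g}_-$. It therefore preserves their intersection $\mathfrak{g}_0$, and likewise every $\mathfrak{g}^i\cap\theta(\mathfrak{g}^{-i})=\mathfrak{g}_i$. So $\Ad_k\in\Aut_\mathrm{gr}(\mathfrak{g})$, and by the earlier proposition identifying $G_0$ with the graded automorphisms, $k\in G_0\simeq\GLin_2\mathbb{R}$.

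Next, $k$ commutes with $\hat\theta$, and $\theta$ acts on $\mathfrak{g}_{-1}$-related data via $A\mapsto (A^{-1})^\top$. Concretely, $\theta(\Ad_A\theta(v))=\Ad_{A^{-\top}}v$ from the formulas $\theta(e_j)=-e_j^\vee$ and $\mathfrak{g}_1\approx(\mathbb{R}^2)^\vee$. Hence $A=A^{-\top}$, i.e.\ $A\in\Orth(2)$. Conversely, every $A\in\Orth(2)$ commutes with $\theta$ on all the generators listed for $\theta$, so it centralizes $\hat\theta$ and lies in $K\cap P$. Therefore $K\cap P=\Orth(2)\leq G_0$.
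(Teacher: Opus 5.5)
Your proof is correct, but the transitivity part takes a different route from the paper's. The paper uses only the identity $\mathfrak{k}+\mathfrak{p}=\mathfrak{g}$ from the start of your Step 1. That identity makes the orbit through $\quot{P}(e)$ open, compactness of $K$ makes that orbit closed, and connectedness of $\Zhe_2/P$ then forces the orbit to be everything.

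So the difficulty you anticipate in Step 2 does not arise. You never need \emph{every} orbit to be open, only this one. Its openness already follows from surjectivity at $e$: the orbit map $k\mapsto k\cdot\quot{P}(e)$ is $K$-equivariant, so it is a submersion everywhere.

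As written, Step 2 is not justified, and neither is the remark that surjectivity persists at $\quot{P}(g)$ ``because $\Ad_g\mathfrak{p}$ is again parabolic.'' Your claim that every conjugate of $\mathfrak{p}$ contains a $\theta$-stable split Cartan subalgebra after $K$-conjugation is essentially the statement $\Zhe_2=KP$ you are trying to prove. You should drop Step 2.

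Your Iwasawa route, on the other hand, does go through. Since $\mathfrak{a}$ is self-centralizing, it is maximal abelian in the $(-1)$-eigenspace of $\theta$. The roots of $\mathfrak{p}_+$ together with $\varepsilon_2-\varepsilon_3$ are exactly the positive roots for the simple system $\{-\varepsilon_2,\varepsilon_2-\varepsilon_3\}$. Hence $AN\leq P$ and $\Zhe_2=KAN\subseteq KP$. This gives the factorization $\Zhe_2=KP$ directly, at the price of quoting the Iwasawa decomposition, whereas the paper's open--closed argument is elementary.

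For the stabilizer, the paper writes $p=A\exp(Y)$ with $A\in G_0$ and $Y\in\mathfrak{p}_+$. It computes $\hat{\theta}p\hat{\theta}^{-1}=(A^{-1})^\top\exp(\theta(Y))$ and reads off $Y=0$ and $A\in\Orth(2)$, tacitly using $G_-\cap P=\{e\}$.

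You instead show that $\Ad_k$ preserves each $\mathfrak{g}_i=\mathfrak{g}^i\cap\theta(\mathfrak{g}^{-i})$. You then apply the proposition identifying $G_0$ with $\Ad^{-1}(\Aut_\mathrm{gr}(\mathfrak{g}))$. This avoids factoring elements of $P$ and is, if anything, more carefully justified.

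The same proposition also settles your converse cleanly: $\theta\circ\Ad_A\circ\theta$ and $\Ad_{A^{-\top}}$ are graded automorphisms that agree on $\mathfrak{g}_{-1}$, hence they are equal.
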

\begin{proof}Consider the quotient map $\mathrm{q}_\mathfrak{p}:\mathfrak{g}\to\mathfrak{g}/\mathfrak{p}$. Since $\mathrm{q}_\mathfrak{p}(\mathfrak{k})=\mathfrak{g}/\mathfrak{p}$, the $K$-orbit through $\quot{P}(e)\in\Zhe_2/P$ is of maximal dimension and therefore must be open. However, $K$ is compact, so all of its orbits are closed. Thus, the $K$-orbit through $\quot{P}(e)$ must be all of $\Zhe_2/P$, since $\Zhe_2/P$ is connected.

Recall from above that $P=G_0P_+$, so that every element of $P$ is of the form $A\exp(Y)$ for some $A\in G_0\simeq\GLin_2\mathbb{R}$ and $Y\in\mathfrak{p}_+$. To find the elements of $P$ that are contained in $K\cap P$, we therefore just compute that
\begin{align*}\hat{\theta}A\exp(Y)\hat{\theta}^{-1} & =\hat{\theta}A\hat{\theta}^{-1}\hat{\theta}\exp(Y)\hat{\theta}^{-1} =\hat{\theta}A\hat{\theta}^{-1}\exp(\theta(Y)) \\ & =(A^{-1})^\top\exp(\theta(Y)),\end{align*}
so $A\exp(Y)\in K\cap P=\{p\in P:\hat{\theta}p\hat{\theta}^{-1}=p\}$ if and only if $Y=0$ and $A\in\Orth(2)<\GLin_2\mathbb{R}\simeq G_0$.\mbox{\qedhere}\end{proof}


Because the maximal compact subgroup $K$ acts transitively on the connected manifold $\Zhe_2/P\cong K/K\cap P$, we get a new model geometry $(K,K\cap P)$ with the same underlying homogeneous space as $(\Zhe_2,P)$.

To help us interpret this model $(K,K\cap P)$, let us first think about
\begin{align*}\POrth(3):=\Orth(3)/\mathrm{Z}(\Orth(3)) & =\Orth(3)/\langle-\mathds{1}\rangle \\ & \simeq\SOrth(3)\simeq\Spin(3)/\mathrm{Z}(\Spin(3)),\end{align*}
which has a natural transitive action on $\mathbb{RP}^2$ induced by the action of $\Orth(3)$ on $\mathbb{R}^3$. Under this induced action, the stabilizer subgroup for the point of $\mathbb{RP}^2$ corresponding to the 1-dimensional subspace spanned by $\begin{smallbmatrix}1 & 0 & 0\end{smallbmatrix}^\top$ is the image of
\[\mathrm{Stab}_{\Orth(3)}\!\left(\begin{smallbmatrix}1 \\ 0 \\ 0\end{smallbmatrix}\right)\simeq\Orth(2)\]
under the quotient homomorphism $\quot{\mathrm{Z}(\Orth(3))}\!:\Orth(3)\to\POrth(3)$. Notably, this stabilizer subgroup of $\begin{smallbmatrix}1 & 0 & 0\end{smallbmatrix}^\top$ in $\Orth(3)$ is precisely the isotropy for 2-dimensional spherical geometry $(\Orth(3),\Orth(2))$, and because $-\mathds{1}$ is not contained in it, $\quot{\mathrm{Z}(\Orth(3))}\!$ restricts to an isomorphism on this isotropy subgroup in $\Orth(3)$. This gives us a model $(\POrth(3),\Orth(2))$ encoding the geometry on $\mathbb{RP}^2$ induced by 2-dimensional spherical geometry via the double cover $\mathbb{S}^2\cong\Orth(3)/\Orth(2)\to\POrth(3)/\Orth(2)\cong\mathbb{RP}^2$.

In a similar way, we can get a model for rolling projective planes via the quotient homomorphism
\[\quot{\mathrm{Z}(\mathrm{S}(\Orth(3)\times\Orth(3)))}\!:\mathrm{S}(\Orth(3)\times\Orth(3))\to\POrth(3)\times\POrth(3).\]

\begin{definition}The model for \emph{rolling projective planes} is given by $(\POrth(3)\times\POrth(3),\Orth(2))$, where $\Orth(2)$ is embedded in $\POrth(3)\times\POrth(3)$ as
\[\left\{(\pm A,\pm A)\in\POrth(3)\times\POrth(3):A\in\mathrm{Stab}_{\Orth(3)}\!\left(\begin{smallbmatrix}1 \\ 0 \\ 0\end{smallbmatrix}\right)\right\},\]
which is the precisely the image of the isotropy for the model geometry of rolling spheres under the quotient homomorphism $\quot{\mathrm{Z}(\mathrm{S}(\Orth(3)\times\Orth(3)))}$.\end{definition}

Via the covering homomorphism $\quot{\mathrm{Z}(\mathrm{S}(\Orth(3)\times\Orth(3)))}$ and the covering map
\[\mathrm{S}(\Orth(3)\times\Orth(3))/\Orth(2)\to(\POrth(3)\times\POrth(3))/\Orth(2)\]
induced by it, we should interpret the geometry of rolling spheres as a double cover of the geometry of rolling projective planes. Like with rolling spheres, we can think of $(\POrth(3)\times\POrth(3))/\Orth(2)$ as the space of relative configurations for a pair of projective planes that are kept tangent to each other, and pictorially, we can imagine the elements $(\pm A_0,\pm A_1)\in\POrth(3)\times\POrth(3)$ as acting by applying $\pm A_0$ to one copy of the projective plane and $\pm A_1$ to the other.

In order to relate this back to $(K,K\cap P)$, let us note that the quotient homomorphism
\[\quot{\mathrm{Z}(K)}\!:K\to K/\mathrm{Z}(K)\simeq\POrth(3)\times\POrth(3)\]
of $K$ by $\mathrm{Z}(K)=\langle\hat{\theta}\rangle$ is \textit{also} a double cover, and since $\hat{\theta}\not\in K\cap P\simeq\Orth(2)$, the restriction $\quot{\mathrm{Z}(K)}|_{K\cap P}$ is \textit{also} an isomorphism onto its image. In fact, choosing our isomorphism $K/\mathrm{Z}(K)\simeq\POrth(3)\times\POrth(3)$ so that
\[\quot{\mathrm{Z}(K)}\circ\rho:\Spin(3)\times\Spin(3)\to\POrth(3)\times\POrth(3)\]
is given by taking the quotient $\Spin(3)\to\SOrth(3)\simeq\POrth(3)$ in each of the two factors individually, we can see from Lemma \ref{setrotoiso} that
\[\quot{\mathrm{Z}(K)}|_{K\cap P}(A)=\left(\pm\begin{smallbmatrix}1 & 0 \\ 0 & A\end{smallbmatrix},\pm\begin{smallbmatrix}1 & 0 \\ 0 & A\end{smallbmatrix}\right),\]
so that the image of $K\cap P$ under $\quot{\mathrm{Z}(K)}$ is precisely the isotropy for the model geometry of rolling projective planes. We should, therefore, interpret $(K,K\cap P)$ as a double cover of the model of rolling projective planes in the same way that the model $(\mathrm{S}(\Orth(3)\times\Orth(3)),\Orth(2))$ for rolling spheres is.

These are not the same double cover, however. Indeed, as discussed in Section 7 of \cite{BorMontgomery2009} and Section 5 of \cite{BaezHuerta2014}, they are not even the same double cover at the level of the base homogeneous spaces up to diffeomorphism: $K/K\cap P\cong\mathbb{RP}^2\times\Spin(3)$, while $\mathrm{S}(\Orth(3)\times\Orth(3))/\Orth(2)\cong\mathbb{S}^2\times\SOrth(3)$. We can distinguish between these double covers intuitively by looking at the kernels of their respective covering homomorphisms. In the case of rolling spheres, the kernel is $\mathrm{Z}(\mathrm{S}(\Orth(3)\times\Orth(3)))=\langle(-\mathds{1},-\mathds{1})\rangle$; the action of the element $(-\mathds{1},-\mathds{1})$ amounts to applying the antipodal map to both spheres---flipping both of their orientations---at the same time, so quotienting out this center is essentially removing the orientability of the two spheres and leaving us with a pair of projective planes. For the model geometry $(K,K\cap P)$, on the other hand, the kernel of the covering homomorphism is $\mathrm{Z}(K)=\langle\hat{\theta}\rangle=\rho(\mathrm{Z}(\Spin(3)\times\Spin(3)))$, where
\[\hat{\theta}=\rho\!\left(\exp\!\left(2\pi\!\begin{smallbmatrix}0 & 0 & 0 \\ 0 & 0 & -1 \\ 0 & 1 & 0\end{smallbmatrix}\right)\!,\,e\right)=\rho\!\left(e,\exp\!\left(2\pi\!\begin{smallbmatrix}0 & 0 & 0 \\ 0 & 0 & -1 \\ 0 & 1 & 0\end{smallbmatrix}\right)\right);\]
we can think of $\hat{\theta}$ as flipping the ``spinorialness'' in one of the $\Spin(3)$ factors, but since
\[\ker(\rho)=\left\langle\left(\exp\!\left(2\pi\!\begin{smallbmatrix}0 & 0 & 0 \\ 0 & 0 & -1 \\ 0 & 1 & 0\end{smallbmatrix}\right)\!,\,\exp\!\left(2\pi\!\begin{smallbmatrix}0 & 0 & 0 \\ 0 & 0 & -1 \\ 0 & 1 & 0\end{smallbmatrix}\right)\right)\right\rangle,\]
this is the same as flipping the ``spinorialness'' of the other factor, so the ``spinorialness'' is really being shared between the two. In other words, when we quotient out the center of $K$, we are removing a kind of ``joint spinorialness'' between the two $\Spin(3)$ factors to get a geometry over a pair of projective planes, so we might think of ${(K,K\cap P)}$ as the geometry of a pair of \emph{``jointly spinorial'' projective planes} rolling along each other; see Chapter 3 of \cite{EricksonThesis} for more details on this perspective. We suspect that this can be interpreted more formally as rolling a pair of supermanifolds (whose reduced manifolds are projective planes) along each other, but the author of this paper is not yet familiar enough with how analogues of Riemannian geometry for supermanifolds work to say this with any authority.

\begin{remark}The reader will note that our description of $(K,K\cap P)$ is distinct from the interpretation provided in \cite{BaezHuerta2014}, which identified it as the geometry of a ``spinorial ball rolling along a projective plane'', and we feel obligated to remark that this interpretation from \cite{BaezHuerta2014} is \textit{incorrect}. Indeed, if one of the two ``rolling surfaces'' for the model were spinorial but not the other, then the symmetry group for the model would need to be either $\Spin(3)\times\SOrth(3)$ or $\SOrth(3)\times\Spin(3)$, neither of which is isomorphic to $K\simeq(\Spin(3)\times\Spin(3))/\ker(\rho)$. We think that Baez and Huerta were led astray by the erroneous assumption that each of the two factors of $\mathbb{RP}^2\times\Spin(3)\cong K/K\cap P$ can be uniquely associated with one of the two ``rolling surfaces'' of the geometry. In truth, these factors have a more complicated relationship with the ``surfaces'' that is not easily visible from the extrinsic viewpoint for rolling used in \cite{BaezHuerta2014}. For example, the $\mathbb{RP}^2$ factor comes from an immersed copy of 2-dimensional projective geometry induced by the identification of $\mathfrak{g}_{-3}+\mathfrak{g}_0+\mathfrak{g}_3$ with $\mathfrak{sl}_3\mathbb{R}$, and can be seen as corresponding to both copies of $\mathbb{RP}^2$ for the geometry of rolling projective planes; for more on this, we again refer to Chapter 3 of \cite{EricksonThesis}.\end{remark}

Regardless of the persnickety particulars of the global interpretation, the considerations above tell us that the geometry of $(K,K\cap P)$ is \textit{locally} isomorphic to that of both rolling spheres and rolling projective planes, with $\mathfrak{k}\approx\mathfrak{so}(3)\oplus\mathfrak{so}(3)$ and $K\cap P\simeq\Orth(2)$ in such a way that the adjoint action of the isotropy is equivalent. This allows us to reinterpret motion and local structure in $(K,K\cap P)$ in terms of motion and local structure in $(\mathrm{S}(\Orth(3)\times\Orth(3)),\Orth(2))$.

Most notably, for $r_0,r_1>0$, we get a distribution
\[\tilde{D}_{r_0/r_1}:=\mathrm{q}_{{}_{K\cap P}*}\MC{K}^{-1}\!\left(\rho_*\!\left(\left\{\left(\tfrac{1}{r_0}\!\begin{smallbmatrix}0 & -v^\top \\ v & 0\end{smallbmatrix}\!,\tfrac{1}{r_1}\begin{smallbmatrix}0 & -v^\top \\ v & 0\end{smallbmatrix}\right)\!:v\in\mathbb{R}^2\right\}+\mathfrak{o}(2)\right)\right)\]
on $K/K\cap P$ locally isomorphic to the rolling distribution
\[D_{r_0/r_1}:=\mathrm{q}_{{}_{\Orth(2)}*}\MC{\mathrm{S}(\Orth(3)\times\Orth(3))}^{-1}\!\left(\left\{\left(\tfrac{1}{r_0}\!\begin{smallbmatrix}0 & -v^\top \\ v & 0\end{smallbmatrix}\!,\tfrac{1}{r_1}\!\begin{smallbmatrix}0 & -v^\top \\ v & 0\end{smallbmatrix}\right)\!:v\in\mathbb{R}^2\right\}+\mathfrak{o}(2)\right)\]
for rolling spheres. Just like with rolling spheres, no choice of $\tilde{D}_{r_0/r_1}$ is inherently more canonical than any other from the perspective of $(K,K\cap P)$, as the geometry does not single out any specific choice for us. However, $\Zhe_2/P\cong K/K\cap P$, and $(\Zhe_2,P)$ \textit{does} have a canonical choice of $(2,3,5)$-distribution, coming from the filtration on $\mathfrak{g}$:
\[D:=\mathrm{q}_{{}_P*}\MC{\Zhe_2}^{-1}(\mathfrak{g}^{-1})=\mathrm{q}_{{}_{K\cap P}*}\MC{K}^{-1}(\mathfrak{k}\cap\mathfrak{g}^{-1}).\]
This is where the additional local infinitesimal symmetries come from.

\begin{lemma}For radii $r_0\neq r_1$, the Lie algebra of local infinitesimal symmetries for the rolling distribution $D_{r_0/r_1}$ is isomorphic to $\mathfrak{g}$ if and only if either $\tilde{D}_{r_0/r_1}\!=D$ or $\tilde{D}_{r_1/r_0}\!=D$.\end{lemma}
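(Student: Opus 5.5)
The plan is to prove the two implications separately. The reverse direction is a direct consequence of the maximality result from Tanaka prolongation. The forward direction requires identifying an abstract copy of $\mathfrak{g}$ acting on $M$ with the concrete model $(\Zhe_2,P)$.

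For ($\Leftarrow$), suppose $\tilde{D}_{r_0/r_1}=D$.
\begin{itemize}
\item The local isomorphism between $(K,K\cap P)$ and $(\mathrm{S}(\Orth(3)\times\Orth(3)),\Orth(2))$ is induced by $\rho_*$, and it is $\Orth(2)$-equivariant. So it identifies $D_{r_0/r_1}$ near a point of $M$ with $\tilde{D}_{r_0/r_1}=D$ near a point of $\Zhe_2/P$.
\item The canonical distribution $D$ is $\Zhe_2$-invariant, so its local infinitesimal symmetries contain $\mathfrak{g}$, which is $14$-dimensional.
\item By Proposition \ref{gradedimageprop}, any symmetry algebra $\mathfrak{s}$ of a $(2,3,5)$-distribution satisfies $\dim\mathfrak{s}=\dim\mathrm{gr}(\mathfrak{s})\leq\dim\mathfrak{g}$. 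Hence the symmetry algebra of $D_{r_0/r_1}$ is exactly $\mathfrak{g}$.
\item If instead $\tilde{D}_{r_1/r_0}=D$, note that the swap of the two $\mathfrak{so}(3)$ factors preserves the diagonal $\mathfrak{o}(2)$. It therefore gives a local isomorphism $D_{r_0/r_1}\cong D_{r_1/r_0}$, and the same argument applies.
\end{itemize}

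For ($\Rightarrow$), let $\mathfrak{s}\cong\mathfrak{g}$ be the symmetry algebra at $x\in M$, and fix an isomorphism $\phi:\mathfrak{s}\to\mathfrak{g}$.
\begin{itemize}
\item Since $\dim\mathfrak{s}=14$, Proposition \ref{gradedimageprop} forces $\mathrm{gr}(\mathfrak{s})=\mathfrak{g}$.
\item Consequently the isotropy $\mathfrak{s}^0$ is $9$-dimensional, and the filtration piece $\mathfrak{s}^{-1}$ (the symmetries whose value at $x$ lies in $D_x$) is an $\mathfrak{s}^0$-invariant subspace with $\mathfrak{s}^{-1}/\mathfrak{s}^0\cong D_x$, a $2$-dimensional space.
\item I would show that $\phi(\mathfrak{s}^0)$ is a parabolic subalgebra. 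Because $\mathrm{gr}(\mathfrak{s}^1)=\mathfrak{p}_+$ is nilpotent, $\mathfrak{s}^1$ is an ideal of $\mathfrak{s}^0$ acting nilpotently. Together with Killing-form duality, this should give $\phi(\mathfrak{s}^0)^\perp=\phi(\mathfrak{s}^1)$, which is nilpotent.
\item Split $\mathrm{G}_2$ has two conjugacy classes of $9$-dimensional parabolics. The other class leaves invariant only a rank-$4$ contact distribution on its $5$-dimensional quotient, not a $(2,3,5)$-distribution. So $\phi(\mathfrak{s}^0)$ is conjugate to $\mathfrak{p}$.
\item Using Proposition \ref{innerauto} to adjust $\phi$ by an inner automorphism, I would arrange $\phi(\mathfrak{s}^0)=\mathfrak{p}$ and $\phi(\mathfrak{s}^{-1})=\mathfrak{g}^{-1}$. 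This yields an equivariant local diffeomorphism $(M,D_{r_0/r_1})\to(\Zhe_2/P,D)$.
\end{itemize}

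Next I would track the subalgebra $\mathfrak{so}(3)\oplus\mathfrak{so}(3)\subset\mathfrak{s}$ coming from $\mathrm{S}(\Orth(3)\times\Orth(3))$.
\begin{itemize}
\item Its image under $\phi$ is a compact subalgebra of dimension $6$, hence a maximal compact subalgebra, hence conjugate to $\mathfrak{k}$.
\item I would choose the conjugating element inside $P$. This should be possible because $K$ acts transitively on $\Zhe_2/P$ (Lemma \ref{loclikerolling}) and $K\cap P\simeq\Orth(2)$, so the conjugation can be absorbed while keeping $\phi(\mathfrak{s}^0)=\mathfrak{p}$.
\item After this, $\phi$ restricts to a Lie algebra isomorphism $\psi:\mathfrak{so}(3)\oplus\mathfrak{so}(3)\to\mathfrak{k}$. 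It carries the isotropy $\mathfrak{o}(2)$ onto $\mathfrak{k}\cap\mathfrak{p}=\langle\rotoiso\rangle$.
\item It also carries $\{(\tfrac{1}{r_0}X,\tfrac{1}{r_1}X):X\in\mathfrak{m}\}+\mathfrak{o}(2)$ onto $\mathfrak{k}\cap\mathfrak{g}^{-1}$.
\end{itemize}

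The final, and hardest, step is to compare $\psi$ with $\rho_*$. Consider $\rho_*^{-1}\circ\psi$, an automorphism of $\mathfrak{so}(3)\oplus\mathfrak{so}(3)$.
\begin{itemize}
\item Every such automorphism is inner composed possibly with the swap of the two factors.
\item By the normalizations in Lemmas \ref{setrotoiso} and \ref{setrolleye}, this automorphism must fix $\mathfrak{o}(2)$ and preserve the shape of the rolling subspace. The argument in those lemmas then shows it is, up to the swap, conjugation by an element of the torus $\exp(\mathbb{R}\,\cdot)\times\exp(\mathbb{R}\,\cdot)$ centralizing $\mathfrak{o}(2)$, possibly composed with a reflection.
\item Such automorphisms preserve each subspace of the form $\{(\tfrac{1}{a}X,\tfrac{1}{b}X):X\in\mathfrak{m}\}$ for every $a,b>0$. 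Therefore $\rho_*$ maps the rolling subspace for either $r_0/r_1$ or $r_1/r_0$ onto $\mathfrak{k}\cap\mathfrak{g}^{-1}$.
\item Pushing forward by the Maurer--Cartan form, this says $\tilde{D}_{r_0/r_1}=D$ or $\tilde{D}_{r_1/r_0}=D$.
\end{itemize}
I expect the main difficulty here to be pinning down this residual automorphism ambiguity carefully, together with the earlier parabolicity and conjugacy step for $\phi(\mathfrak{s}^0)$.
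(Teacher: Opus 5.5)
Your proposal is correct and follows essentially the same route as the paper. The reverse direction transfers the $\Zhe_2$-symmetries of $D$ through the local isomorphism induced by $\rho_*$, with the Tanaka bound giving equality. The forward direction identifies the compact $\mathfrak{so}(3)\oplus\mathfrak{so}(3)$ inside the symmetry algebra with $\mathfrak{k}\subset\mathfrak{g}$, compatibly with $\mathfrak{p}\subset\mathfrak{g}^{-1}$ and up to the swap of factors. Your Killing-form argument that the isotropy is conjugate to $\mathfrak{p}$, and your choice of the conjugating element inside $P$ (via transitivity of $K$ on $\Zhe_2/P$), make explicit what the paper compresses into ``exactly two embeddings up to conjugacy.''

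There is one small slip in the last step. A torus element rotating only one factor fixes $\mathfrak{o}(2)$ but does \emph{not} preserve the subspaces $\{(\tfrac{1}{a}X,\tfrac{1}{b}X):X\in\mathfrak{m}\}$. An example is $\bigl(e,\exp\bigl(t\begin{smallbmatrix}0 & 0 & 0 \\ 0 & 0 & -1 \\ 0 & 1 & 0\end{smallbmatrix}\bigr)\bigr)$ with $t\notin 2\pi\mathbb{Z}$. Instead, argue as follows. The automorphism $\rho_*^{-1}\circ\psi$ preserves $\mathfrak{o}(2)^\perp$, taken with respect to the sum of the Killing forms of the two factors. It also preserves the invariant norms on the two factors, or exchanges them in the swap case. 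Hence it maps the rolling subspace for $r_0/r_1$ onto $\rho_*^{-1}(\{v+\theta(v):v\in\mathfrak{g}_{-1}\})$. Comparing the norms of the two components then forces $r_0/r_1$ to equal the ratio fixed by $\rho$ or its reciprocal, which is all you need.
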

\begin{proof}Swapping the $\Spin(3)$ factors of
\[K=\rho(\Spin(3)\times\Spin(3))\simeq(\Spin(3)\times\Spin(3))/\ker(\rho)\]
gives a Lie group automorphism of $K$ that preserves the isotropy ${K\cap P}$. This induces a $K$-equivariant diffeomorphism of $K/K\cap P$ of the form $\quot{K\cap P}(\rho(A_0,A_1))\mapsto\quot{K\cap P}(\rho(A_1,A_0))$, which provides an isomorphism between $\tilde{D}_{r_0/r_1}$ and $\tilde{D}_{r_1/r_0}$, analogous to the isomorphism between $D_{r_0/r_1}$ and $D_{r_1/r_0}$ coming from swapping the roles of the two rolling spheres. 
The $(2,3,5)$-distribution $\tilde{D}_{r_0/r_1}$ will, therefore, have Lie algebra of local infinitesimal symmetries isomorphic to $\mathfrak{g}$ at some point (hence at every point, by the transitive action of $K$) if and only if $\tilde{D}_{r_1/r_0}$ does too.

When $\tilde{D}_{r_0/r_1}\!=D$, the action of $K$ on $K/K\cap P$ by symmetries of the distribution $\tilde{D}_{r_0/r_1}$ extends to the action of $\Zhe_2$ on $\Zhe_2/P\cong K/K\cap P$ by symmetries of $D$. The local infinitesimal symmetries at a point of $K/K\cap P$ then give local infinitesimal symmetries for $D_{r_0/r_1}$ because of the local isomorphism between the geometries of $(K,K\cap P)$ and $(\mathrm{S}(\Orth(3)\times\Orth(3)),\Orth(2))$, so if $\tilde{D}_{r_0/r_1}\!=D$, then $D_{r_0/r_1}\cong D_{r_1/r_0}$ has local infinitesimal symmetry algebra isomorphic to $\mathfrak{g}$.

Conversely, if $D_{r_0/r_1}$ has Lie algebra of local infinitesimal symmetries isomorphic to $\mathfrak{g}$, then the local isomorphism between the geometries tells us that the Lie algebra of local infinitesimal symmetries for $\tilde{D}_{r_0/r_1}$ is also $\mathfrak{g}$. From the natural filtration of $\mathfrak{g}$ at the end of Section \ref{tanakalight}, we can see that $\tilde{D}_{r_0/r_1}$ must coincide with the image of $\mathfrak{g}^{-1}$ at each point, so since $K$ is acting transitively on $K/K\cap P$ by symmetries of $\tilde{D}_{r_0/r_1}$, we get a Lie algebra embedding $i:\mathfrak{k}\to\mathfrak{g}$ with
\[\tilde{D}_{r_0/r_1}=\mathrm{q}_{{}_{K\cap P}*}\MC{K}^{-1}(i^{-1}(i(\mathfrak{k})\cap\mathfrak{g}^{-1})).\]
There are exactly two choices of embedding for $\mathfrak{k}\approx\mathfrak{so}(3)\oplus\mathfrak{so}(3)$ into $\mathfrak{g}$ up to conjugacy, namely the inclusion map and the composition of the inclusion map with the swap between the two summands; in other words, if $\tilde{D}_{r_0/r_1}$ is not equal to $D=\mathrm{q}_{{}_{K\cap P}*}\MC{K}^{-1}(\mathfrak{k}\cap\mathfrak{g}^{-1})$, then $\tilde{D}_{r_1/r_0}$ must be.\mbox{\qedhere}\end{proof}

\begin{corollary}The rolling distribution $D_{r_0/r_1}$ admits a Lie algebra of local infinitesimal symmetries isomorphic to $\mathfrak{g}$ if and only if $\tfrac{r_0}{r_1}\in\{3,\tfrac{1}{3}\}$.\end{corollary}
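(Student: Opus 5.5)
The corollary follows by combining the preceding lemma with Corollary \ref{ratioappears}. The key step is to identify $D$ as one of the distributions $\tilde{D}_{r_0/r_1}$ with explicit radii.

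First, fix $\rho$ as in Lemmas \ref{setrotoiso}, \ref{setrolleye} and \ref{setslideeye}. By Lemma \ref{setrolleye}, there are some $r_0^*,r_1^*>0$ such that $\rho_*$ maps
\[\left\{\left(\tfrac{1}{r_0^*}\begin{smallbmatrix}0 & -v^\top \\ v & 0\end{smallbmatrix},\tfrac{1}{r_1^*}\begin{smallbmatrix}0 & -v^\top \\ v & 0\end{smallbmatrix}\right):v\in\mathbb{R}^2\right\}+\mathfrak{o}(2)\]
isomorphically onto $\mathfrak{k}\cap\mathfrak{g}^{-1}$. Consequently $D=\mathrm{q}_{{}_{K\cap P}*}\MC{K}^{-1}(\mathfrak{k}\cap\mathfrak{g}^{-1})=\tilde{D}_{r_0^*/r_1^*}$. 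Corollary \ref{ratioappears} then gives $r_0^*/r_1^*\in\{3,\tfrac13\}$. Since the swap automorphism of $K$ exchanges $\tilde{D}_{r_0/r_1}$ and $\tilde{D}_{r_1/r_0}$, we may assume $D=\tilde{D}_{3}$ after possibly swapping the factors. One should check that the swap preserves the normalizations of Lemmas \ref{setrotoiso} and \ref{setrolleye}, which is clear since both are symmetric in the two factors.

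Now take any $r_0\neq r_1$. By the preceding lemma, $D_{r_0/r_1}$ has symmetry algebra isomorphic to $\mathfrak{g}$ if and only if $\tilde{D}_{r_0/r_1}=D$ or $\tilde{D}_{r_1/r_0}=D$, that is, if and only if $\tilde{D}_{r_0/r_1}$ or $\tilde{D}_{r_1/r_0}$ equals $\tilde{D}_{3}$. It therefore suffices to show that $\tilde{D}_{s}=\tilde{D}_{t}$ forces $s=t$. This is the one genuinely new point, and it is routine. Both distributions are left-invariant and are determined by their preimages in $\mathfrak{k}$, which contain $\mathfrak{k}\cap\mathfrak{p}$. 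Modulo $\mathfrak{o}(2)$, these preimages are the $2$-planes
\[\left\{\left(\tfrac{1}{r_0}X,\tfrac{1}{r_1}X\right):X\in\mathfrak{m}\right\}\]
inside $\mathfrak{m}\oplus\mathfrak{m}$. Such a plane determines the ratio $r_0/r_1$: its intersection with $\{(X,\lambda X)\}$ is nonzero exactly when $\lambda=r_0/r_1$. Hence $\tilde{D}_{r_0/r_1}=\tilde{D}_3$ if and only if $r_0/r_1=3$, and $\tilde{D}_{r_1/r_0}=\tilde{D}_3$ if and only if $r_0/r_1=\tfrac13$. This proves both directions.

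The only real obstacle is bookkeeping: making sure the $\rho$ used to define $\tilde{D}_{r_0/r_1}$ in the preceding lemma is the normalized one from Lemmas \ref{setrotoiso}--\ref{setslideeye}, so that Corollary \ref{ratioappears} applies to it. The paper fixes this choice when defining $\tilde{D}_{r_0/r_1}$, so no further work is needed.
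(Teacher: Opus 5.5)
Your proof is correct and follows the paper's route. You identify $D$ with $\tilde{D}_{r_0/r_1}$ for the normalized $\rho$, then apply Corollary \ref{ratioappears} and the preceding lemma, and your explicit check that $s\mapsto\tilde{D}_s$ is injective supplies a step the paper leaves implicit. The one omission, shared by the paper's own proof, is the case $r_0=r_1$, which the preceding lemma excludes: there the distribution is integrable and so has an infinite-dimensional symmetry algebra, as noted in Section \ref{otherratios}.
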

\begin{proof}By Lemmas \ref{setrotoiso} and \ref{setrolleye}, we know that there exist some $r_0,r_1>0$ for which $\tilde{D}_{r_0/r_1}=\mathrm{q}_{{}_{K\cap P}*}\MC{K}^{-1}(\mathfrak{k}\cap\mathfrak{g}^{-1})=D$, and by Corollary \ref{ratioappears}, we know that when we put $\rho$ into the form required for this, we must have
\[\rho_*^{-1}(v+\theta(v))\in\left\{\left(\begin{smallbmatrix}0 & -v^\top \\ v & 0\end{smallbmatrix}\!,3\begin{smallbmatrix}0 & -v^\top \\ v & 0\end{smallbmatrix}\right),\left(3\begin{smallbmatrix}0 & -v^\top \\ v & 0\end{smallbmatrix}\!,\begin{smallbmatrix}0 & -v^\top \\ v & 0\end{smallbmatrix}\right)\right\}.\]
Thus, $\tilde{D}_{r_0/r_1}=D$ or $\tilde{D}_{r_1/r_0}=D$ if and only if the ratio of the radii is either $\tfrac{1}{1/3}=3$ or $\tfrac{1/3}{1}=\tfrac{1}{3}$.\mbox{\qedhere}\end{proof}

As we mentioned at the beginning of the paper, we can interpret this result from the perspective of the rolling spheres themselves, essentially by translating Corollary \ref{ratioappears} into the language of that geometry via the local isomorphism between $(K,K\cap P)$ and $(\mathrm{S}(\Orth(3)\times\Orth(3)),\Orth(2))$. To do this, let us consider the one-parameter subgroups of $\mathrm{S}(\Orth(3)\times\Orth(3))$ generated by $(\tfrac{1}{r_0}[\begin{smallmatrix}0 & e_1^\top \\ e_1 & 0\end{smallmatrix}],\tfrac{1}{r_1}[\begin{smallmatrix}0 & e_1^\top \\ e_1 & 0\end{smallmatrix}])$ and $([\begin{smallmatrix}0 & e_1^\top \\ e_1 & 0\end{smallmatrix}],-[\begin{smallmatrix}0 & e_1^\top \\ e_1 & 0\end{smallmatrix}])$. As mentioned in Section \ref{rollattach}, $(\tfrac{1}{r_0}[\begin{smallmatrix}0 & e_1^\top \\ e_1 & 0\end{smallmatrix}],\tfrac{1}{r_1}[\begin{smallmatrix}0 & e_1^\top \\ e_1 & 0\end{smallmatrix}])$ generates a one-parameter subgroup corresponding to rolling a sphere of radius $r_0$ and a sphere of radius $r_1$ together along a pair of great circles without letting them slip or twist. The one-parameter subgroup generated by $([\begin{smallmatrix}0 & e_1^\top \\ e_1 & 0\end{smallmatrix}],-[\begin{smallmatrix}0 & e_1^\top \\ e_1 & 0\end{smallmatrix}])$, on the other hand, moves the two spheres along the same pair of great circles, rotating each sphere by the same amount but in the opposite direction as the other. These correspond to the left and right curves depicted in Figure \ref{benotafraid}, respectively. Like we saw in Corollary \ref{ratioappears}, these commuting one-parameter subgroups generate a maximal torus in $\mathrm{S}(\Orth(3)\times\Orth(3))$ of the form $\exp(\mathbb{R}[\begin{smallmatrix}0 & e_1^\top \\ e_1 & 0\end{smallmatrix}])\times\exp(\mathbb{R}[\begin{smallmatrix}0 & e_1^\top \\ e_1 & 0\end{smallmatrix}])$, which is isomorphic to $\mathbb{R}^2/2\pi\mathbb{Z}^2$ by
\[\left(\exp(x[\begin{smallmatrix}0 & e_1^\top \\ e_1 & 0\end{smallmatrix}]),\exp(y[\begin{smallmatrix}0 & e_1^\top \\ e_1 & 0\end{smallmatrix}])\right)\mapsto\begin{smallbmatrix}x \\ y\end{smallbmatrix}+2\pi\mathbb{Z}^2.\]
Conveniently, we can think of the maximal torus for $K$ as a double cover of this maximal torus for $\mathrm{S}(\Orth(3)\times\Orth(3))$, via their identifications with $\mathbb{R}^2/(2\pi\mathbb{Z}[\begin{smallmatrix}1 \\ 1\end{smallmatrix}]+2\pi\mathbb{Z}[\begin{smallmatrix}1 \\ -1\end{smallmatrix}])$ and $\mathbb{R}^2/2\pi\mathbb{Z}^2$, respectively. Because it is a double cover, the corresponding one-parameter subgroups must intersect twice as many times in this torus as they did in $K$, so since Lemma \ref{2intersections} necessitates exactly 2 intersections in $K$, the one-parameter subgroups in $\mathrm{S}(\Orth(3)\times\Orth(3))$ must intersect exactly 4 times, which happens if and only if the ratio of the radii is 1:3 or 3:1.

\subsection{The other ratios of radii have no additional symmetries}\label{otherratios}\hfill \\
For $r_0=r_1$, the rolling distribution $D_{r_0/r_1}$ is not a $(2,3,5)$-distribution. Indeed, it is integrable in that case, so its local infinitesimal symmetry algebra contains the infinite-dimensional space of vector fields tangent to the distribution. Beyond this little punctilio, however, we will prove in this subsection that only the 1:3 ratio of radii yields additional local symmetry, which is the only thing left to prove for the main theorem.

\begin{lemma}When $\tfrac{r_0}{r_1}\!\not\in\!\{3,1,\tfrac{1}{3}\}$, the Lie algebra of local infinitesimal symmetries for $D_{r_0/r_1}$ is precisely $\mathfrak{so}(3)\oplus\mathfrak{so}(3)$.\end{lemma}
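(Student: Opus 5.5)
We need to show that when $\tfrac{r_0}{r_1}\notin\{3,1,\tfrac13\}$, the symmetry algebra $\mathfrak{s}$ of $D_{r_0/r_1}$ at a point is exactly $\mathfrak{so}(3)\oplus\mathfrak{so}(3)$. The plan has three steps: bound $\dim\mathfrak{s}$ using Proposition \ref{gradedimageprop}, use the resulting structure of $\mathfrak{s}$ to produce a second candidate distribution, and then reduce to the previous corollary.

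\textbf{Step 1: containment and dimension bounds.} Certainly $\mathfrak{so}(3)\oplus\mathfrak{so}(3)\subseteq\mathfrak{s}$. By Proposition \ref{gradedimageprop}, $\mathrm{gr}(\mathfrak{s})$ is a graded subalgebra of $\mathfrak{g}$ containing $\mathfrak{g}_-$. Its degree-$0$ part contains the $\mathfrak{o}(2)$ coming from the isotropy. So $6\le\dim\mathfrak{s}\le14$.

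\textbf{Step 2: the graded image contains $\mathfrak{g}_-+\mathfrak{g}_0$.} Suppose $\mathfrak{s}$ is strictly larger than $\mathfrak{so}(3)\oplus\mathfrak{so}(3)$. Then $\mathrm{gr}_{\ge 0}(\mathfrak{s})$ properly contains $\mathfrak{o}(2)$. I would use the isotropy $\mathfrak{o}(2)$ acting on the graded pieces to show that $\mathrm{gr}_0(\mathfrak{s})$ must be all of $\mathfrak{g}_0$. The relevant decompositions are $\mathfrak{g}_0\approx\mathfrak{gl}_2\mathbb{R}=\mathfrak{o}(2)\oplus\langle\mathds{1}\rangle\oplus\mathrm{Sym}_0$, and $\mathfrak{g}_1,\mathfrak{g}_2,\mathfrak{g}_3$ as $\mathfrak{o}(2)$-modules. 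The bracket relations of Definition \ref{tanakaprolongation} should force this: a nonzero element of $\mathfrak{g}_i$ with $i>0$, bracketed against $\mathfrak{g}_{-i}$ (always present), lands in $\mathfrak{g}_0$ outside $\mathfrak{o}(2)$. For example, $[\alpha,v]=\alpha(v)\mathds{1}-3\alpha\otimes v$. Likewise, a nontrivial degree-$0$ element brackets with $\mathfrak{o}(2)$ to give more of $\mathfrak{g}_0$.

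\textbf{Step 3: the whole algebra and a contradiction.} The goal is to show that any graded subalgebra containing $\mathfrak{g}_-+\mathfrak{g}_0$ together with a nonzero positive piece must be all of $\mathfrak{g}$, by $\mathfrak{gl}_2$-irreducibility of each $\mathfrak{g}_i$. There is a gap to close: $\mathfrak{g}_-+\mathfrak{g}_0$ alone is a possible graded subalgebra. To rule it out I would compare with semisimplicity. In that case $\mathfrak{s}$ is $9$-dimensional, contains the semisimple $\mathfrak{so}(3)\oplus\mathfrak{so}(3)$ with $\mathfrak{s}/(\mathfrak{so}(3)\oplus\mathfrak{so}(3))$ three-dimensional, and $\mathrm{gr}(\mathfrak{s})$ has a $5$-dimensional nilpotent ideal $\mathfrak{g}_-$. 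I would try to derive a contradiction from the Levi decomposition of $\mathfrak{s}$ via the filtration: the radical of $\mathfrak{s}$ would have to be visible in $\mathrm{gr}$. This is fiddly, and I expect it to be the main obstacle.

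An alternative for Step 3 is Cartan's classical result that submaximal symmetry of $(2,3,5)$-distributions has dimension at most $7$. Any symmetry algebra of dimension strictly between $6$ and $14$ would then be $7$-dimensional. Its isotropy is $2$-dimensional and contains $\mathfrak{o}(2)$, so it is $\Ad$-stable under $\Orth(2)$. A $7$-dimensional Lie algebra containing the semisimple $\mathfrak{so}(3)\oplus\mathfrak{so}(3)$ decomposes as $\mathfrak{so}(3)\oplus\mathfrak{so}(3)\oplus\mathbb{R}$. The extra central vector field commutes with the transitive $\mathfrak{so}(3)\oplus\mathfrak{so}(3)$ action, so it corresponds to a right action by an element of the normalizer of $\mathfrak{o}(2)$ modulo $\mathfrak{o}(2)$. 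That quotient is trivial here, since $N(\mathfrak{o}(2))/\mathfrak{o}(2)$ in $\mathfrak{so}(3)\oplus\mathfrak{so}(3)$ is spanned by $(\rotoiso\text{-type},0)$. I would check that this element does not preserve $D_{r_0/r_1}$ because it twists one sphere only. This gives a contradiction.

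\textbf{Conclusion.} Once Steps 2 and 3 show that $\mathfrak{s}$ is all of $\mathfrak{g}$, the previous corollary forces $\tfrac{r_0}{r_1}\in\{3,\tfrac13\}$, contradicting the hypothesis. Hence $\mathfrak{s}=\mathfrak{so}(3)\oplus\mathfrak{so}(3)$.
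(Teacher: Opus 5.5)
\textbf{Your main route (Steps 2--3) has a genuine gap, and the gap is the whole content of the lemma.} Step 2 is false as stated. Use your own decomposition $\mathfrak{g}_0=\mathfrak{o}(2)\oplus\langle\mathds{1}\rangle\oplus\mathrm{Sym}_0$: $\mathds{1}$ commutes with $\mathfrak{o}(2)$, and $\mathfrak{o}(2)\oplus\mathrm{Sym}_0\approx\mathfrak{sl}_2\mathbb{R}$ is a subalgebra. So bracketing with $\mathfrak{o}(2)$ cannot force $\mathrm{gr}_0(\mathfrak{s})=\mathfrak{g}_0$.

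What your bracket computation does give is the following. A nonzero $\mathrm{gr}_i(\mathfrak{s})$ with $i>0$ is all of $\mathfrak{g}_i$, because each $\mathfrak{g}_i$ is $\mathfrak{o}(2)$-irreducible. Together with $\mathfrak{g}_-$ it then generates $\mathfrak{g}$, which the previous corollary excludes. Hence $\mathfrak{s}^1=0$ and $\mathrm{gr}(\mathfrak{s})\subseteq\mathfrak{g}_-+\mathfrak{g}_0$, with $\mathrm{gr}_0(\mathfrak{s})$ equal to $\mathbb{R}\mathds{1}\oplus\mathfrak{o}(2)$, $\mathfrak{sl}_2\mathbb{R}$, or $\mathfrak{g}_0$. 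Step 3 leaves all three cases open.

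The missing idea is that $\mathrm{gr}$ says nothing about semisimplicity unless the filtration splits; indeed $\mathrm{gr}(\mathfrak{so}(3)\oplus\mathfrak{so}(3))=\mathfrak{g}_-+\mathfrak{o}(2)$. The paper proves that it does split here:
\begin{itemize}
\item Since $\mathfrak{s}^1=0$, we have $\mathfrak{s}^0\cong\mathrm{gr}_0(\mathfrak{s})$.
\item If $\mathds{1}\in\mathrm{gr}_0(\mathfrak{s})$, it lifts to some $E\in\mathfrak{s}^0$. Then $\ad_E$ acts on each $\mathrm{gr}_{-i}(\mathfrak{s})$, $0\leq i\leq 3$, by the distinct scalars $i$. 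So $\ad_E$ is diagonalizable, its eigenspaces grade $\mathfrak{s}$, and $\mathfrak{s}\cong\mathrm{gr}(\mathfrak{s})$.
\item If $\mathrm{gr}_0(\mathfrak{s})=\mathfrak{sl}_2\mathbb{R}$, the paper instead uses weight spaces of $\mathfrak{a}\cap\mathfrak{g}_0^{\mathrm{ss}}$, refined by the reflection in $\Orth(2)$.
\end{itemize}
Then $\mathfrak{so}(3)\oplus\mathfrak{so}(3)\subseteq\mathfrak{s}\cong\mathrm{gr}(\mathfrak{s})\subseteq\mathfrak{g}_-+\mathfrak{g}_0$ is impossible. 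A semisimple subalgebra meets the nilpotent ideal $\mathfrak{g}_-$ trivially, so it would embed in the $4$-dimensional $\mathfrak{g}_0$.

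\textbf{Your fallback via Cartan's gap theorem works, after one correction, and is a genuinely different route.} Let $J:=\begin{smallbmatrix}0 & 0 & 0 \\ 0 & 0 & -1 \\ 0 & 1 & 0\end{smallbmatrix}$. The normalizer of the diagonal $\mathfrak{o}(2)$ is $\mathbb{R}J\oplus\mathbb{R}J$, so the quotient is one-dimensional, not trivial; it is spanned by the class of $(J,0)$. The check you defer is $[(J,0),(\tfrac{1}{r_0}X,\tfrac{1}{r_1}X)]=(\tfrac{1}{r_0}[J,X],0)$. For $X\neq 0$ this lies outside $\{(\tfrac{1}{r_0}X',\tfrac{1}{r_1}X'):X'\in\mathfrak{m}\}+\mathfrak{o}(2)$. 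So no nonzero field commuting with the transitive $\mathfrak{so}(3)\oplus\mathfrak{so}(3)$-action preserves $D_{r_0/r_1}$. Cartan's theorem also gives that dimension $14$ means local flatness, so $\mathfrak{s}\cong\mathfrak{g}$ and the corollary applies.

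This replaces the paper's case analysis with a deep external result, whereas the paper stays inside the Tanaka framework it built. You can in fact drop Cartan:
\begin{itemize}
\item Once $\mathfrak{s}^1=0$ gives $\dim\mathfrak{s}\leq 9$, Weyl's theorem yields an $\mathfrak{so}(3)\oplus\mathfrak{so}(3)$-stable complement $V$ with $\dim V\leq 3$.
\item Such a $V$ is either a trivial module or the $3$-dimensional representation of one factor.
\item A trivial $V$ consists of fields commuting with the transitive action, so $V=0$ by the computation above. This covers the $\mathbb{R}\mathds{1}\oplus\mathfrak{o}(2)$ and $\mathfrak{sl}_2\mathbb{R}$ cases.
\item The remaining case is $\dim\mathfrak{s}=9$. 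There $\mathrm{gr}_0(\mathfrak{s})=\mathfrak{g}_0$, and the grading-element splitting finishes the proof.
\end{itemize}
This gives a self-contained argument that avoids the paper's $\mathfrak{sl}_2\mathbb{R}$ case.
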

\begin{proof}In this case, since $\tfrac{r_0}{r_1}\neq 1$, $D_{r_0/r_1}$ must be a $(2,3,5)$-distribution, so by Proposition \ref{gradedimageprop}, the graded image $\mathrm{gr}(\mathfrak{s})$ of the local infinitesimal symmetry algebra $\mathfrak{s}$ at a point of $\mathrm{S}(\Orth(3)\times\Orth(3))/\Orth(2)$ must embed as a graded subalgebra of $\mathfrak{g}$. Moreover, as we noted at the end of Section \ref{tanakalight},\linebreak if $\mathfrak{s}=\mathfrak{so}(3)\oplus\mathfrak{so}(3)$, coming from the transitive action of $\mathrm{S}(\Orth(3)\times\Orth(3))$, then the graded image will be $\mathfrak{g}_-+\mathfrak{o}(2)$. If $\mathfrak{s}$ were to properly contain this copy of $\mathfrak{so}(3)\oplus\mathfrak{so}(3)$, then $\mathrm{gr}(\mathfrak{s})$ would need to contain $\mathfrak{g}_-+\mathfrak{o}(2)$ as a proper subalgebra as well.

Let us suppose, by way of contradiction, that $\eta\in\mathfrak{s}$ is not contained in $\mathfrak{so}(3)\oplus\mathfrak{so}(3)$. Because $\mathrm{S}(\Orth(3)\times\Orth(3))$ is acting transitively, we may assume that $\eta$ vanishes at the point under consideration, so that $\eta\in\mathfrak{s}^0$. If we further had $\eta\in\mathfrak{s}^1$, then the graded image $\mathrm{gr}(\mathfrak{s})$ would contain a nonzero element in a positive grading component, but since the graded image must also contain $\mathfrak{o}(2)<\mathfrak{g}_0\approx\mathfrak{gl}_2\mathbb{R}$, it would necessarily contain the rest of that grading component as well; taken together with $\mathfrak{g}_-$, this would generate all of $\mathfrak{g}$, but we know $\mathrm{gr}(\mathfrak{s})\neq\mathfrak{g}$ because $\tfrac{r_0}{r_1}\not\in\{3,\tfrac{1}{3}\}$, so this is impossible. Thus, we may assume that $\mathfrak{s}^1=\{0\}$, which tells us that $\mathfrak{s}^0\approx\mathrm{gr}_0(\mathfrak{s})<\mathfrak{g}_0$. Knowing that $\eta\in\mathfrak{s}^0$ is not contained in the copy of $\mathfrak{o}(2)$ coming from the isotropy of $\mathfrak{so}(3)\oplus\mathfrak{so}(3)$, this leaves us three options for what $\mathrm{gr}_0(\mathfrak{s})$ can be, corresponding to the three subalgebras properly containing $\mathfrak{o}(2)$ in $\mathfrak{gl}_2\mathbb{R}$: it must be either the sum $\mathbb{R}\mathds{1}\oplus\mathfrak{o}(2)$, the semisimple part $\mathfrak{g}_0^\text{ss}\approx\mathfrak{sl}_2\mathbb{R}$ of $\mathfrak{g}_0$, or $\mathfrak{g}_0$ itself.

In all three of these cases, $\mathfrak{s}$ must be isomorphic to its graded image. For $\mathrm{gr}_0(\mathfrak{s})$ equal to either $\mathbb{R}\mathds{1}\oplus\mathfrak{o}(2)$ or $\mathfrak{g}_0$, this comes from the adjoint\linebreak action of $\mathds{1}\in\mathrm{gr}_0(\mathfrak{s})\approx\mathfrak{s}^0$ being multiplication by the integer $i$ on each grading component $\mathfrak{g}_{-i}$. When $\mathrm{gr}_0(\mathfrak{s})=\mathfrak{g}_0^\text{ss}$, the argument is slightly more involved: as a representation of $\mathfrak{g}_0^\text{ss}\approx\mathfrak{sl}_2\mathbb{R}$, $\mathfrak{s}$ breaks into weight spaces of $\langle\begin{smallbmatrix}1 & 0 \\ 0 & -1\end{smallbmatrix}\rangle=\mathfrak{a}\cap\mathfrak{g}_0^\text{ss}$, with the bracket of two weight spaces $\mathfrak{s}_{\nu_1}$ and $\mathfrak{s}_{\nu_2}$ contained in $\mathfrak{s}_{\nu_1+\nu_2}$ in the usual way. The automorphism of $\mathfrak{s}$ induced by the reflection $\begin{smallbmatrix}1 & 0 \\ 0 & -1\end{smallbmatrix}\in\Orth(2)$ as an element of $\mathrm{S}(\Orth(3)\times\Orth(3))$ naturally commutes with the adjoint action of the elements of $\mathfrak{a}\cap\mathfrak{g}_0^\text{ss}$, so we can further decompose the weight spaces according to their eigenvalue for this automorphism. This singles out a distinguished copy of each root space for $\mathfrak{a}$ in $\mathfrak{g}_-$ inside of $\mathfrak{s}$, with the behavior of the roots under the bracket forcing the $\mathfrak{g}_0^\text{ss}$-subrepresentation complementary to $\mathfrak{g}_0^\text{ss}$ in $\mathfrak{s}$ to be isomorphic to $\mathfrak{g}_-$.

However, we are assuming that $\mathfrak{s}$ contains $\mathfrak{so}(3)\oplus\mathfrak{so}(3)$, so since the graded image of $\mathfrak{s}$ must be contained in $\mathfrak{g}_-+\mathfrak{g}_0$ and $\mathfrak{so}(3)\oplus\mathfrak{so}(3)$ cannot embed as a subalgebra of $\mathfrak{g}_-+\mathfrak{g}_0$, $\mathfrak{s}$ cannot be isomorphic to its graded image. This is a contradiction, so we must have $\mathfrak{s}=\mathfrak{so}(3)\oplus\mathfrak{so}(3)$.\mbox{\qedhere}\end{proof}

\bibliographystyle{plain}
\bibliography{rolling-refs}

@book{CapSlovakPG1,
    author={{\v{C}}ap, Andreas and Slov\'{a}k, Jan},
    title={{P}arabolic {G}eometries {I}: {B}ackground and {G}eneral {T}heory},
    series={Mathematical Surveys and Monographs},
    volume={154},
    publisher={Amer. Math. Soc.},
    address={Providence, RI},
    year={2009}
}

@book{Sharpe1997,
    author={Sharpe, R. W.},
    title={{D}ifferential {G}eometry: {C}artan's {G}eneralization of {K}lein's {E}rlangen {P}rogram},
    series={Graduate Texts in Mathematics},
    volume={166},
    publisher={Springer-Verlag},
    address={New York, NY},
    year={1997}
}

@phdthesis{EricksonThesis,
    author={Erickson, Jacob W.},
    title={Closed surface pairs with maximal local rolling symmetries},
    school={University of Maryland, College Park},
    year={2025}
}

@article{Cartan1910,
    author={Cartan, \'{E}lie},
    title={Les syst\`{e}mes de {P}faff \`{a} cinq variables et les \'{e}quations aux d\'{e}riv\'{e}es partielles du second ordre},
    journal={Ann. Sci. \'{E}c. Norm. Sup\'{e}r.},
    volume={27},
    pages={109-192},
    year={1910}
}

@article{Agrachev2007,
    author={Agrachev, A. A.},
    title={Rolling balls and octonions},
    journal={Proc. Steklov Inst. Math.},
    volume={258}, pages={12--22}, year={2007}
}

@article{AnNurowski2014,
    author={An, Daniel and Nurowski, Pawe\l{}},
    title={Twistor space for rolling bodies},
    journal={Communications in Mathematical Physics},
    volume={326},
    pages={393-414},
    year={2014}
}

@article{BorMontgomery2009,
    author={Bor, Gil and Mongomery, Richard},
    title={$\mathrm{G}_2$ and the ``rolling distribution''},
    journal={Enseign. Math.},
    volume={55},
    pages={157-196},
    year={2009}
}

@article{BaezHuerta2014,
    author={Baez, John C. and Huerta, John},
    title={$\mathrm{G}_2$ and the rolling ball},
    journal={Trans. Amer. Math. Soc.},
    volume={366},
    pages={5257-5293},
    year={2014}
}

@article{Zelenko2006,
    author={Zelenko, Igor},
    title={On variational approach to differential invariants of rank two distributions},
    journal={Diff. Geom. Appl.},
    volume={24},
    number={3},
    pages={235-259},
    year={2006}
}

@article{Tanaka1970,
    author={Tanaka, Noboru},
    title={On differential systems, graded {L}ie algebras and pseudo-groups},
    journal={J. Math. Kyoto Univ.},
    volume={10},
    pages={1-82},
    year={1970}
}

@article{LeistnerNurowski2012,
    author={Leistner, Thomas and Nurowski, Pawe{\l}},
    title={Ambient metrics with exceptional holonomy},
    journal={Ann. Sc. Norm. Super. Pisa Cl. Sci. (5)},
    volume={XI}, pages={407--436}, year={2012}
}

@article{Evans2025,
    author={Evans, Parker},
    title={Geometric structures for the $\mathrm{G}_2'$-{H}itchin component},
    journal={Adv. Math.},
    volume={462},
    pages={110091},
    year={2025}
}

@misc{The2022,
    author={The, Dennis},
    title={A {C}artan-theoretic classification of multiply-transitive (2,3,5)-distributions},
    howpublished={arXiv:2205.03387},
    year={2022}
}

\end{document}